\NeedsTeXFormat{LaTeX2e}
[1994/12/01]
\documentclass[draft]{article}
\pagestyle{headings}

\title{Extremal Hypergraphs for Ryser's Conjecture:\\
Home-Base Hypergraphs}
\author{Penny Haxell\thanks{Department of Combinatorics and Optimization, University of Waterloo, Waterloo, ON, Canada. Partially supported by NSERC and by a Friedrich Wilhelm Bessel
 Award of the Alexander von Humboldt Foundation.} \quad Lothar Narins\thanks{Freie Universit\"at Berlin, Berlin, Germany. Supported by the Research Training Group 
 \emph{Methods for Discrete Structures} and the Berlin Mathematical School.} \quad 
Tibor Szab\'{o}\thanks{Freie Universit\"at Berlin, Berlin, Germany. Research partially supported by DFG within the Research Training Group \emph{Methods for Discrete Structures.}}}
\date{\today}

\usepackage{amsmath, amsthm, amssymb, tikz}
\usepackage[margin=20pt,small,bf]{caption}
\usetikzlibrary{arrows, positioning, shapes}


\newtheorem{thm}{Theorem}[section]
\newtheorem{cor}[thm]{Corollary}
\newtheorem{lem}[thm]{Lemma}
\newtheorem{prop}[thm]{Proposition}
\newtheorem{obs}[thm]{Observation}

\newtheorem{conj}{Conjecture}
\newtheorem*{claim}{Claim}

\theoremstyle{definition}
\newtheorem{defn}[thm]{Definition}
\newtheorem*{assm}{Assumptions}
\newtheorem*{ih}{Induction Hypothesis}

\theoremstyle{remark}

\newtheorem*{notation}{Notation}

\numberwithin{equation}{section}


\DeclareMathOperator{\conn}{conn}
\DeclareMathOperator{\lk}{lk}

\newcommand{\N}{\mathbb{N}}
\newcommand{\R}{\mathbb{R}}

\newcommand{\cF}{\mathcal{F}}
\newcommand{\cG}{\mathcal{G}}
\newcommand{\cH}{\mathcal{H}}

\newcommand{\cM}{\mathcal{M}}

\newcommand{\cR}{\mathcal{R}}

\newcommand{\cU}{\mathcal{U}}

\newcommand{\lset}[1]{\left\{#1\right\}}
\newcommand{\set}[2]{\left\{#1 : #2 \right\}}
\newcommand{\abs}[1]{\left|#1\right|}

\newcommand{\link}[2]{\lk_{#1}(#2)}

\begin{document}
\maketitle
\markboth{Extremal Hypergraphs for Ryser's Conjecture}
{Home-Base Hypergraphs }
\renewcommand{\sectionmark}[1]{}

\begin{abstract}
Ryser's Conjecture states that any $r$-partite $r$-uniform hypergraph has a vertex cover of size at most $r - 1$ times the size of the largest matching. For $r = 2$, the conjecture is simply K\"onig's Theorem and every bipartite graph is a witness for its tightness. The conjecture has also been proven for $r = 3$ by Aharoni using topological methods, but the proof does not give information on the extremal $3$-uniform hypergraphs. Our goal in this paper is to characterize those hypergraphs which are tight for Aharoni's Theorem. 

Our proof of this characterization is also based on topological machinery, particularly utilizing results on the (topological) connectedness of the independence complex of the line graph of the link graphs of $3$-uniform Ryser-extremal hypergraphs, developed in a separate paper~\cite{HNS}. The current paper contains the second, structural hypergraph-theoretic part of the argument, where we use the information on the line graph of the link graphs to nail down the elements of a structure we call  \emph{home-base hypergraph}. While there is a single minimal home-base hypergraph with matching number $k$ for every positive integer $k \in \N$, home-base hypergraphs with matching number $k$ are far from being unique. There are infinitely many of them and each of them is composed of $k$ copies of two different kinds of basic structures, whose hyperedges can intersect in various restricted, but intricate ways.

Our characterization also proves an old and wide open strengthening of 
Ryser's Conjecture, due to  Lov\'asz, for the $3$-uniform extremal case, that is,
for hypergraphs with $\tau =2 \nu$.
\end{abstract}

\section{Ryser's Conjecture} \label{sec:rysersconjecture}

A hypergraph $\cH$ is a pair $(V, E)$, where $V = V(\cH)$ is the set of \emph{vertices}, and $E = E(\cH)$ is a {\bf multiset} of subsets of vertices called the \emph{edges} of $\cH$. The number of times a subset $e\subseteq V$ appears in $E$ is called the \emph{multiplicity} of $e$. A hypergraph is called \emph{simple} if the multiplicity of each subset is at most $1$. An edge $e \in E$ is called \emph{parallel} to an edge $f \in E$ if their underlying vertex subsets are the same. In particular, every edge is parallel to itself. If the cardinality of every edge is $r$, we call $\cH$ an \emph{$r$-graph}. A $2$-graph is called a \emph{graph}. 
 
Let $\cH$ be a hypergraph. A \emph{matching} in $\cH$ is a set of disjoint edges of $\cH$, and the \emph{matching number}, $\nu(\cH)$, is the size of the largest matching in $\cH$. If $\nu(\cH) = 1$, then $\cH$ is called \emph{intersecting}. A \emph{vertex cover} of $\cH$ is a set of vertices which intersects every edge of $\cH$. The size of the smallest vertex cover is called the \emph{vertex cover number} of $\cH$ and is denoted by $\tau(\cH)$. It is immediate to see that if $\cH$ is $r$-uniform, then the following always holds:
\[
	\nu(\cH) \leq \tau(\cH) \leq r\nu(\cH).
\]

Both inequalities are easily seen to be tight for general hypergraphs. Ryser's Conjecture (see e.g. \cite{tuza2}), originating from the early 1970's, states that the upper bound can be lowered by considering only $r$-partite hypergraphs. An $r$-graph is called \emph{$r$-partite} if its vertices can be partitioned into $r$ parts, called \emph{vertex classes}, such that every edge intersects each vertex class exactly once.

\begin{conj}[Ryser's Conjecture]
If $\cH$ is an $r$-partite $r$-graph, then
\[
	\tau(\cH) \leq (r - 1)\nu(\cH).
\]
\end{conj}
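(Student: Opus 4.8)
\medskip
\noindent\textbf{Proof proposal.} The statement is a conjecture, and it is open for $r \ge 4$; what can currently be proved is the range $r \le 3$, so the plan below sketches those cases and then indicates the programmatic route one would have to follow for larger $r$. For $r = 2$ the assertion is exactly K\"onig's theorem, $\tau(\cH) = \nu(\cH)$ for bipartite $\cH$, which I would obtain either from LP duality --- the incidence matrix of a bipartite graph is totally unimodular, so the fractional cover and matching optima coincide and are integral --- or, more combinatorially, by an alternating-path argument that turns a maximum matching into a cover of the same size. Tightness for $r=2$ is then witnessed by \emph{every} bipartite graph, since equality always holds.

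For $r = 3$ I would follow Aharoni's topological argument. Fix a maximum matching $M = \{e_1, \dots, e_k\}$ with $e_i = \{a_i, b_i, c_i\}$, $a_i \in V_1$, $b_i \in V_2$, $c_i \in V_3$; since $M$ is maximum, every edge of $\cH$ meets $V(M)$. It suffices to choose, for each $i$, two of the three vertices of $e_i$ so that the resulting $2k$ vertices form a cover --- equivalently, to decide for each $i$ which of the three vertex classes to omit at $e_i$ without leaving any edge uncovered. The obstruction to making these choices greedily is recorded by the link graphs: for $v \in V_1$, the bipartite graph on $V_2 \sqcup V_3$ whose edges are the pairs $\{x,y\}$ with $\{v,x,y\} \in E(\cH)$. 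Aharoni--Haxell's topological Hall theorem produces the required system of disjoint representatives provided the independence complexes of the line graphs of the relevant unions of link graphs are sufficiently highly connected, and the connectivity-from-matching-number lemma (Meshulam; Aharoni--Berger--Ziv) converts the hypothesis $\nu(\cH) = k$ into precisely such a connectivity bound. Assembling these pieces yields $\tau(\cH) \le 2k$, as required.

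The main obstacle --- and the reason the conjecture is still open for $r \ge 4$ --- is that this template would require (i) a topological Hall-type theorem for families of $(r-1)$-uniform link hypergraphs, and (ii) a lower bound on the connectedness of the relevant independence (Kneser-type) complex of such a family strong enough to produce the factor $r-1$ rather than merely a constant multiple of it. The presently available connectivity estimates degrade badly with the uniformity and fall well short of what is needed, and no combinatorial substitute is known; even an inductive reduction to smaller $r$ runs into the same loss at each step. Since a full resolution is out of reach, the present paper instead fixes $r = 3$ and uses exactly this topological input --- the connectedness of the independence complex of the line graph of the link graphs of Ryser-extremal $3$-graphs, established in \cite{HNS} --- to determine the structure of the extremal examples, the home-base hypergraphs.
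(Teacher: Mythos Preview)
Your assessment is correct and matches the paper's treatment: the statement is labeled as a \emph{Conjecture}, and the paper offers no proof of it. Immediately after stating it, the paper simply records that the case $r=2$ is K\"onig's Theorem, that $r=3$ was proved by Aharoni~\cite{aharoni} via topological methods, and that $r \ge 4$ is wide open --- exactly what you say. Your sketch of Aharoni's argument (topological Hall via connectedness of independence complexes of line graphs of link graphs) is accurate and is indeed the machinery the paper imports from~\cite{HNS}, though the paper itself does not rehearse Aharoni's proof but rather uses this machinery for the separate goal of characterizing the extremal $3$-graphs. There is nothing to correct here; you have rightly identified that no proof is expected or given.
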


 Around the same time a much stronger conjecture was made by Lov\'asz~\cite{lovasz}.
 The conjecture states that not only do we have a vertex cover of size $(r-1)\nu(\cH)$,
 but we can obtain it by repeatedly reducing the matching number by one with the 
 removal of $r-1$ vertices.

 \begin{conj}[Lov\'asz Conjecture]
In every $r$-partite $r$-graph there exist $r-1$ vertices whose deletion reduces the 
matching number.
 \end{conj} 
 
 For $r = 2$ both conjectures are implied by K\"onig's theorem.
 For $r = 3$ Aharoni~\cite{aharoni} proved Ryser's Conjecture using topological methods. 
 The Lov\'asz Conjecture is wide open for $r\geq 3$, as is Ryser's Conjecture 
 for $r \geq 4$. For more on the history, see~\cite{HNS}.

K\"onig's theorem implies that for $r = 2$, the conjecture is tight for \emph{every} bipartite graph. For $r = 3$, Aharoni's proof does not give information on the extremal $3$-graphs. Our aim is to give a complete characterization of them. We prove the following theorem:

\begin{thm} \label{thm:characterization}
Let $\cH$ be a $3$-partite $3$-graph. Then $\tau(\cH) = 2\nu(\cH)$ if and only if $\cH$ is a home-base hypergraph.
\end{thm}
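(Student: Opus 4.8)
The plan is to prove the two implications separately. The direction ``home-base hypergraph $\Rightarrow$ $\tau(\cH)=2\nu(\cH)$'' should be a direct structural verification, while the converse is the substantial half and I would prove it by induction on $k=\nu(\cH)$, with the topological results of~\cite{HNS} on $\conn(\cI(L(\link{\cH}{v})))$ as the engine of the inductive step.

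For the forward direction, suppose $\cH$ is a home-base hypergraph assembled from $k$ copies $\cB_1,\dots,\cB_k$ of the two basic intersecting gadgets. First I would check that $\nu(\cH)=k$: picking one edge from each $\cB_i$ gives a matching of size $k$ because the overlap rules forbid the configurations that would make two such edges intersect, and conversely any matching uses at most one edge per gadget since each $\cB_i$ is intersecting, so $\nu(\cH)\le k$ as well. Since Aharoni's Theorem already gives $\tau(\cH)\le 2\nu(\cH)=2k$, it remains to prove $\tau(\cH)\ge 2k$. For this I would take an arbitrary vertex cover $T$ and use that each $\cB_i$ is intersecting with $\tau=2$ together with the key feature of the definition — that the permitted overlaps are precisely those under which no single vertex can simultaneously ``do cover work'' for two different gadgets — to charge two vertices of $T$ to each $\cB_i$ with no vertex charged twice, giving $|T|\ge 2k$. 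The minimal home-base hypergraph ($k$ disjoint copies of the smaller gadget) is the trivial special case, and the point is that adding parallel edges and the extra edges allowed by the overlap rules changes neither $\nu$ nor $\tau$.

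For the converse, I induct on $k=\nu(\cH)$. The base case $k=1$ requires a classification of intersecting $3$-partite $3$-graphs with $\tau=2$: I would show each such hypergraph contains one of the two basic gadgets and, once parallel edges and edges through the cover pair are accounted for, is itself a single-block home-base hypergraph. For the inductive step, fix $\cH$ with vertex classes $A,B,C$, $\nu(\cH)=k\ge 2$ and $\tau(\cH)=2k$. By~\cite{HNS}, for every vertex $v$ there is an upper bound on $\conn(\cI(L(\link{\cH}{v})))$, and when the bound is attained the link graph $\link{\cH}{v}$ is forced into a short list of very restricted shapes. Feeding these bounds into the deficiency version of the topological Hall / Aharoni--Berger--Ziv theorem (the machine underlying Aharoni's proof of $\tau\le 2\nu$), the hypothesis that equality $\tau(\cH)=2k$ holds rather than strict inequality can occur only if the deficiency is concentrated so as to expose a vertex $v$, say in $A$, whose link has one of the restricted shapes and which lies inside a copy $\cB$ of one of the two basic gadgets. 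I would then show that removing the vertices of $\cB$ that are private to it (or performing the appropriate contraction and cleanup) yields a $3$-partite $3$-graph $\cH'$ with $\nu(\cH')=k-1$ and $\tau(\cH')=2(k-1)$; by induction $\cH'$ is a home-base hypergraph with $k-1$ blocks, and re-attaching $\cB$ together with the edges running between $\cB$ and $\cH'$ respects exactly the overlap rules of the definition, so $\cH$ is a home-base hypergraph with $k$ blocks.

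The main obstacle is this inductive step, and it has two faces. First, one must extract from the connectedness bounds of~\cite{HNS} not merely that some link is small but that an entire gadget $\cB$ can be peeled off while the remainder stays exactly extremal, i.e.\ that $\nu$ drops by exactly one and $\tau$ by exactly two at the same time; this forces a rigidity statement about how $\cB$'s edges meet maximum matchings and minimum covers of $\cH$, and one must rule out that extremality is ``spread out'' with no clean block to remove. Second, one must pin down precisely which edges may pass between the peeled gadget and the rest: too liberal an interaction would either create examples outside the home-base family or drop $\tau$ below $2k$, so the bulk of the work is a careful case analysis, organised by the restricted link shapes coming from~\cite{HNS}, of how edges through the cover vertices of $\cB$ and through $\cH'$ can coexist. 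Handling parallel edges, degenerate degree situations, and the choice of which of $A$, $B$, $C$ to peel from are further technical points the argument must absorb.
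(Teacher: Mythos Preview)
Your broad outline—forward direction by direct verification, converse by induction on $\nu$—matches the paper, but the inductive step as you describe it has two genuine gaps.

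First, the topological input is not about links of single vertices. The results of \cite{HNS} concern $\link{\cH}{V_i}$, the link of an \emph{entire vertex class}, and give $\conn(L(\link{\cH}{V_i}))=\nu(\cH)-2$ together with a CP-decomposition of that bipartite graph (Theorems~\ref{thm:connoflink} and~\ref{thm:cpdecomposition}). There is no statement that some individual $\link{\cH}{v}$ has a restricted shape sitting inside a gadget.

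Second, and more seriously, ``peel off a gadget $\cB$ and apply induction'' is exactly the step that does not work directly, and you have no mechanism for it beyond acknowledging the difficulty. The paper's mechanism is the notion of a \emph{cromulent triple} $(Y_1,Y_2,X)$: sets in the three classes whose removal drops $\nu$ by $|Y_1|$ (not necessarily $1$), leaves an extremal $\cH_0$ (home-base by induction), and such that the hyperedges meeting $X$ land back in $\cH_0$ only in $V(\cF)\cup V(\cR)$ of the induced partition. Finding one is organised by a dichotomy: either $\link{\cH}{V_i}$ has a \emph{good set}, which yields a cromulent triple directly (Lemma~\ref{lem:goodtocromulent}); or it has none, forcing the link to have a perfect matching with all minimal equineighbored sets of size $2$ (Lemma~\ref{lem:goodsets}). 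A further split on whether some such set sees two disjoint hyperedges either again gives a cromulent triple (Lemma~\ref{lem:twodisjointhyperedges}) or forces every minimal equineighbored set to extend to a truncated Fano plane, and only then can a clean $F$ be peeled off (Lemma~\ref{lem:endgame}). Verifying cromulence and extending the partition of $\cH_0$ to $\cH$ both rest on a tool you are missing entirely: the Monster Lemma (Lemma~\ref{lem:monster}), which produces maximum matchings in a home-base hypergraph avoiding three prescribed vertices, and is what makes the ``edges between the peeled piece and $\cH'$'' analysis tractable.

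On the forward direction: your charging argument is morally right, but the reason is not that ``no single vertex can do cover work for two gadgets''—members of $\cF\cup\cR$ \emph{can} share vertices with $\cR$-edges of other blocks. What actually makes the charging work is the \emph{matchability} condition: for each $R\in\cR$ it supplies distinct $W$-vertices $w^R_1,w^R_2,w^R_3$ so that the three witness $R$-edges through them are vertex-disjoint from every $F\in\cF$ and from the corresponding witnesses of every other $R'\in\cR$ (Proposition~\ref{prop:homebasetight}).
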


Home-base hypergraphs have a restricted structure, but are far from being unique: for any given positive integer $k \in \mathbb{N}$ there are infinitely many home-base hypergraphs with matching number $k$. The precise description is given in the following subsection.

One could speculate that among $3$-partite $3$-graphs with given matching number $k$, the hardest
instances to prove the Lov\'asz Conjecture for will be the ones with vertex cover number
as large as possible, i.e. $2k$. 
Indeed, for these graphs the conjecture will have to be tight in every step.
As it turns out the Lov\'asz Conjecture for $3$-graphs  with $\tau(\cH) = 2\nu(\cH)$ 
is a relatively simple consequence of our characterization in 
Theorem~\ref{thm:characterization}.

\begin{cor} \label{cor:lovasz}
Let $\cH$ be a $3$-partite $3$-graph with $\tau(\cH) = 2\nu(\cH)$. 
Then there exists $\nu(\cH)$ pairwise disjoint pairs of vertices 
such that the removal of the union of any $k$, $1\leq k \leq \nu (\cH)$, of these pairs 
decreases the matching number by $k$.\\
In  particular there exist two vertices, the removal of which 
reduces the matching number of $\cH$.
\end{cor}

\subsection{Home-Base Hypergraphs}

To motivate our definition of home-base hypergraphs, let us start with some examples of $3$-graphs $\cH$ with $\tau(\cH) = 2 = 2\nu(\cH)$. A general example of an $r$-graph, which is tight for Ryser's Conjecture is the \emph{truncated projective plane} $F^{(r)}$. Its vertex set is constructed by taking the projective plane over the $(r - 1)$-element field and removing one point from it. The lines of the plane which were incident to this point become the vertex classes of the $r$-graph, and the rest of the lines become the edges. Since any two lines of the projective plane intersect, we have $\nu(F^{(r)}) = 1$. It is also not difficult to see that the smallest vertex covers are the vertex classes and hence $\tau(F^{(r)}) = r - 1$. Truncated projective planes exist whenever $r$ is one greater than a prime power. Luckily, $3$ is such a number, and thus we have the truncated Fano plane. Concretely, the \emph{truncted Fano-plane} is the $3$-graph $F^{(3)} = F$ with vertex set $\lset{a, b, c, x, y, z}$ and edges $abc$, $ayz$, $xbz$, and $xyc$ (here the vertex classes are $\lset{a, x}$, $\lset{b, y}$, and $\lset{c, z}$); see Figure~\ref{fig:fano}. (In all our pictures of $3$-partite
hypergraphs the vertex classes $V_1$, $V_2$, $V_3$ are drawn vertically.)

Adding parallel edges to any hypergraph does not affect the vertex cover number or the matching number. We call any $3$-graph a \emph{truncated multi-Fano plane}, if it is obtained from the truncated Fano-plane by adding an arbitrary number of parallel edges.

\begin{center}
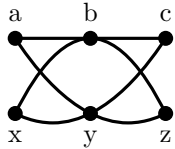
 
\begin{tikzpicture} 
[edge/.style={very thick},
vertex/.style={circle, fill, inner sep=0, minimum size=2mm}]

\draw[edge] (0,1) -- (2,1);
\draw[edge] (0,1) parabola bend (3/2,-1/8) (2,0);
\draw[edge] (0,0) parabola bend (1,1) (2,0);
\draw[edge] (0,0) parabola bend (1/2,-1/8) (2,1);
\node[vertex] at (0,1) [label=above:a] {};
\node[vertex] at (1,1) [label=above:b] {};
\node[vertex] at (2,1) [label=above:c] {};
\node[vertex] at (0,0) [label=below:x] {};
\node[vertex] at (1,0) [label=below:y] {};
\node[vertex] at (2,0) [label=below:z] {};
\end{tikzpicture}
\captionof{figure}{The truncated Fano plane.} \label{fig:fano}
\end{center}

However, the truncated Fano-plane is not minimal, since removing any edge from it yields another example of an intersecting hypergraph which cannot be covered by a single vertex. To be concrete, let $C$ be the hypergraph on the vertex set $\lset{a, b, c, x, y, z}$ and edges $ayz$, $xbz$, and $xyc$. (This hypergraph is called the \emph{loose 3-cycle}.) Note that three of the vertices have degree $2$ and three have degree $1$. One can extend $C$ by adding edges (perhaps containing new vertices) which contain two of the degree $2$ vertices and still obtain an intersecting hypergraph (and obviously the vertex cover number does not decrease). This creates a family of edges which is intersecting simply because they all contain two of the vertices $x$, $y$, and $z$. Thus this family is determined by the set $R = \lset{x, y, z}$.

\begin{center}
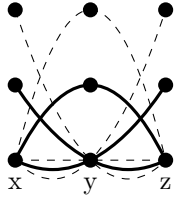

\begin{tikzpicture}
[edge/.style={very thick},
halfedge/.style={dashed},
vertex/.style={circle, fill, inner sep=0, minimum size=2mm}]

\draw[edge] (0,1) parabola bend (3/2,-1/8) (2,0);
\draw[edge] (0,0) parabola bend (1,1) (2,0);
\draw[edge] (0,0) parabola bend (1/2,-1/8) (2,1);
\draw[halfedge] (0,2) parabola bend (3/2,-1/4) (2,0);
\draw[halfedge] (0,0) parabola bend (1,2) (2,0);
\draw[halfedge] (0,0) parabola bend (1/2,-1/4) (2,2);
\draw[halfedge] (0,0) -- (2,0);
\foreach \x in {0, 1, 2}
	\foreach \y in {1, 2}
		\node[vertex] at (\x,\y) {};
\node[vertex] at (0,0) [label=below:x] {};
\node[vertex] at (1,0) [label=below:y] {};
\node[vertex] at (2,0) [label=below:z] {};
\end{tikzpicture}
\captionof{figure}{The truncated Fano plane minus one edge, with possible additional edges drawn in dashed lines.}
\end{center}

We say that a $3$-partite $3$-graph $\cH$ is \emph{Ryser-extremal}, if $\tau(\cH) = 2 \nu(\cH)$. Our hope would be that every Ryser-extremal $3$-graph is made up of such $R$-families and truncated multi-Fano-planes. This is indeed the case, but the edges of these substructures can intersect in various ways. 

The simplest way to describe a home-base hypergraph $\cH$ with $\nu(\cH)=k$ is as follows. Start with a set of $k$ disjoint hypergraphs, each of which is a copy of $F$ or a loose 3-cycle $C$ as above. Now add any number of additional edges $e$, each of which intersects some 
$C$ in at least two of its degree-2 vertices. (The third vertex of $e$ is arbitrary in the remaining vertex class of ${\cal H}$.) This does capture the notion of home-base
hypergraph, however for our inductive proof the following series of 
more technical definitions will be needed.

\begin{defn} \label{def:frpartition}
Let $\cH$ be a $3$-partite $3$-graph. An \emph{FR-partition} of $\cH$ is a triple $(\cF, \cR, W)$ with $\cF, \cR \subseteq 2^{V(\cH)}$ and $W \subseteq V(\cH)$ which satisfies the following conditions:
\begin{enumerate}
	\renewcommand{\theenumi}{(\arabic{enumi})}
	\renewcommand{\labelenumi}{\theenumi}
	\item \label{fr:partition} $\cF \cup \cR \cup \lset{W}$ is a partition of the vertices of $\cH$,
	\item \label{fr:f} For each $F \in \cF$, the induced hypergraph $\cH|_F$ is isomorphic to a truncated multi-Fano plane,
	\item \label{fr:r} Each $R \in \cR$ is a three-vertex set with one vertex from each vertex class of $\cH$,
	\item \label{fr:size} $\abs{\cF \cup \cR} = \nu(\cH)$.
\end{enumerate}
\end{defn}

Note that $\cF$ is a $6$-graph and $\cR$ is a $3$-graph.

\begin{defn} \label{def:matchable}
$\cH$ be a $3$-partite $3$-graph with vertex classes $V_1$, $V_2$, and $V_3$, and let $(\cF, \cR, W)$ be an FR-partition of $\cH$. For each vertex class $V_i$, we define a bipartite graph $B_i$ with vertex classes $\cR$ and $W \cap V_i$ and with an edge between $R \in \cR$ and $w \in W \cap V_i$ precisely when there is an edge of $\cH$ containing $w$ and two vertices of $R$. The partition $(\cF, \cR, W)$ is called \emph{matchable} if each $B_i$ has a matching saturating $\cR$.
\end{defn}

An example of a non-matchable FR-partition is given in the following picture, where the boxes correspond to two $R$'s and the unboxed vertices are in $W$:

\begin{center}
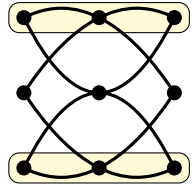

\begin{tikzpicture}
[edge/.style={very thick},
vertex/.style={circle, fill, inner sep=0, minimum size=2mm},
r/.style={rounded corners, draw, fill=yellow!20, minimum height=4mm, minimum width=24mm, inner sep=0mm}]

\node[r] at (1,0) {};
\node[r] at (1,2) {};
\draw[edge] (0,1) parabola bend (3/2,-1/8) (2,0);
\draw[edge] (0,0) parabola bend (1,1) (2,0);
\draw[edge] (0,0) parabola bend (1/2,-1/8) (2,1);
\draw[edge] (0,1) parabola bend (3/2,17/8) (2,2);
\draw[edge] (0,2) parabola bend (1,1) (2,2);
\draw[edge] (0,2) parabola bend (1/2,17/8) (2,1);
\foreach \x in {0, 1, 2}
	\foreach \y in {0, 1, 2}
		\node[vertex] at (\x,\y) {};
\end{tikzpicture}
\captionof{figure}{An unmatchable FR-partition.}
\end{center}

\begin{defn} \label{def:edgehome}
An FR-partition $(\cF, \cR, W)$ of $\cH$ is said to have the \emph{edge-home} property if every edge of $\cH$ is either in $\cH|_F$ for some $F \in \cF$ or contains two vertices from some $R \in \cR$.
\end{defn}

\begin{defn} \label{def:homebase}
A matchable FR-partition with the edge-home property is called a \emph{home-base partition}. $\cH$ is called a \emph{home-base hypergraph} if it has a home-base partition.
\end{defn}

\begin{notation}
For each $F \in \cF$, we call an edge an $F$-edge if it is in $\cH|_F$. For each $R \in \cR$, we call an edge an $R$-edge if it contains two vertices from $R$. We call an edge an $\cF$-edge if it is an $F$-edge for some $F \in \cF$, and call an edge an $\cR$-edge if it is an $R$-edge for some $R \in \cR$.
\end{notation}

Here follows an example of a home-base hypergraph. The boxes correspond to members of $\cF$ or $\cR$, and the unboxed vertices are in $W$. The bolded edges are the edges of $\cH|_F$ for some $F \in \cF$ or the edges corresponding to the edges of arbitrarily chosen matchings saturating $\cR$ in the auxiliary bipartite graphs $B_i$.

\begin{center}
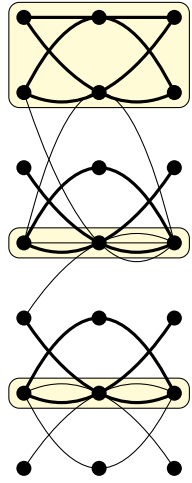

\begin{tikzpicture}
[edge/.style={very thick},
thinedge/.style={},
vertex/.style={circle, fill, inner sep=0, minimum size=2mm},
f/.style={rounded corners, draw, fill=yellow!20, minimum height=14mm, minimum width=24mm, inner sep=0mm},
r/.style={rounded corners, draw, fill=yellow!20, minimum height=4mm, minimum width=24mm, inner sep=0mm}]

\node[f] at (1,11/2) {};
\node[r] at (1,3) {};
\node[r] at (1,1) {};
\draw[edge] (0,6) -- (2,6);
\draw[edge] (0,6) parabola bend (3/2,39/8) (2,5);
\draw[edge] (0,5) parabola bend (1,6) (2,5);
\draw[edge] (0,5) parabola bend (1/2,39/8) (2,6);
\draw[edge] (0,4) parabola bend (3/2,23/8) (2,3);
\draw[edge] (0,3) parabola bend (1,4) (2,3);
\draw[edge] (0,3) parabola bend (1/2,23/8) (2,4);
\draw[edge] (0,2) parabola bend (3/2,7/8) (2,1);
\draw[edge] (0,1) parabola bend (1,2) (2,1);
\draw[edge] (0,1) parabola bend (1/2,7/8) (2,2);
\draw[thinedge] (0,0) parabola bend (3/2,9/8) (2,1);
\draw[thinedge] (0,1) parabola bend (1,0) (2,1);
\draw[thinedge] (0,1) parabola bend (1/2,9/8) (2,0);
\draw[thinedge] (0,2) parabola bend (3/2,25/8) (2,3);
\draw[thinedge] (0,3) -- (2,3);
\draw[thinedge] (0,3) parabola bend (1,5) (2,3);
\draw[thinedge] (0,5) parabola bend (3/2,11/4) (2,3);
\foreach \x in {0, 1, 2}
	\foreach \y in {0, ..., 6}
		\node[vertex] at (\x,\y) {};
\end{tikzpicture}
\captionof{figure}{A home-base hypergraph with its home-base partitition.}
\end{center}

We can easily see one direction of Theorem~\ref{thm:characterization}:

\begin{prop} \label{prop:homebasetight}
If $\cH$ has a home-base partition $(\cF, \cR, W)$, then $\tau(\cH) = 2\nu(\cH)$.
\end{prop}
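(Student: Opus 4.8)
The plan is to prove the two inequalities $\tau(\cH)\le 2\nu(\cH)$ and $\tau(\cH)\ge 2\nu(\cH)$ separately. In both, the link to $\nu(\cH)$ is condition~\ref{fr:size}, which gives $\nu(\cH)=\abs{\cF\cup\cR}=\abs{\cF}+\abs{\cR}$, the last equality holding because $\cF$, $\cR$, $\lset{W}$ are the parts of a partition, so no set lies in both $\cF$ and $\cR$. Thus it is enough to exhibit a vertex cover of size $2\abs{\cF}+2\abs{\cR}$ and to show that every vertex cover is at least this large.

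\emph{Upper bound.} For each $F\in\cF$ the hypergraph $\cH|_F$ is a truncated multi-Fano plane; parallel edges do not change $\tau$ and $\tau$ of the truncated Fano plane is $2$, so there is a set $C_F\subseteq F$ with $\abs{C_F}=2$ covering $\cH|_F$. For each $R\in\cR$ pick any $2$-element subset $C_R\subseteq R$; since any two $2$-subsets of a $3$-set meet, $C_R$ intersects every edge that contains two vertices of $R$. By the edge-home property (Definition~\ref{def:edgehome}), every edge of $\cH$ is an $F$-edge for some $F$ — hence lies inside $F$ and is met by $C_F$ — or an $R$-edge for some $R$ — hence met by $C_R$. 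So $\bigcup_{F\in\cF}C_F\cup\bigcup_{R\in\cR}C_R$ is a vertex cover of size at most $2\abs{\cF}+2\abs{\cR}=2\nu(\cH)$.

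\emph{Lower bound.} Let $C$ be any vertex cover; I will assign to each piece $P\in\cF\cup\cR$ two vertices of $C$ so that all $2\abs{\cF\cup\cR}$ assigned vertices are distinct. For $F\in\cF$: every $F$-edge lies inside $F$, so $C\cap F$ covers $\cH|_F$ and $\abs{C\cap F}\ge 2$; assign two of its vertices to $F$. For $R\in\cR$: using Definition~\ref{def:matchable}, fix for each class $V_i$ a matching $M_i$ in $B_i$ saturating $\cR$, and let $w_i(R)\in W\cap V_i$ be matched to $R$. Then some edge of $\cH$ passes through $w_i(R)$ and two vertices of $R$, and as it meets each class exactly once it must be $e_i(R):=\lset{w_i(R)}\cup(R\setminus V_i)$. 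If $\abs{C\cap R}\ge 2$, assign two vertices of $C\cap R$ to $R$. If $C\cap R=\lset{r}$ with $r\in V_i$, then both $R$-vertices of $e_i(R)$ avoid $C$, forcing $w_i(R)\in C$; assign $\lset{r,w_i(R)}$. If $C\cap R=\emptyset$, then $w_i(R)\in C$ for every $i$; assign $\lset{w_1(R),w_2(R)}$.

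The crux — and the only place matchability is used — is that no vertex is assigned twice. An assigned vertex is either a vertex of some piece $P\in\cF\cup\cR$, and since the sets in $\cF\cup\cR$ are pairwise disjoint and disjoint from $W$ such a vertex belongs to a unique piece; or it is some $w_i(R)$, and the vertices $\lset{w_i(R):i\in\lset{1,2,3},\ R\in\cR}$ are pairwise distinct — for fixed $i$ because $M_i$ is a matching, and across different $i$ because they lie in different vertex classes — and all lie in $W$, hence outside every piece. So the $2\abs{\cF\cup\cR}$ assigned vertices are distinct elements of $C$, giving $\abs{C}\ge 2\abs{\cF\cup\cR}=2\nu(\cH)$. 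I expect this disjointness bookkeeping to be the main obstacle: one must check that a cheap cover cannot escape paying full price for an $R$-piece by reusing a single $W$-vertex among several $R$'s, which is exactly what the system of distinct representatives $w_i(R)$ supplied by matchability rules out. The remaining points — that $e_i(R)$ has the stated form (forced by $3$-partiteness) and that the case split on $\abs{C\cap R}$ is exhaustive — are routine.
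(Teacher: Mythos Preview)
Your proof is correct. The lower bound is essentially the paper's argument, just with the bookkeeping made more explicit: the paper observes that the edge families $\{e_1(R),e_2(R),e_3(R)\}$ and $\cH|_F$ are pairwise vertex-disjoint intersecting hypergraphs each with cover number~$2$, and concludes $\abs{T}\ge 2\abs{\cF\cup\cR}$ in one line; your case split on $\abs{C\cap R}$ unpacks the same disjointness.

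The upper bound, however, is handled differently. The paper does \emph{not} construct an explicit cover; it simply invokes Aharoni's theorem ($\tau\le 2\nu$ for $3$-partite $3$-graphs). Your explicit cover via the edge-home property is more self-contained --- it avoids appealing to the topological machinery behind Aharoni's result --- but at the cost of using the edge-home property, which the paper deliberately avoids (and remarks on just after the proof). The paper's version therefore actually establishes $\tau(\cH)=2\nu(\cH)$ for any $3$-graph with a merely \emph{matchable} FR-partition, not just a home-base partition; this stronger statement is used implicitly later, for instance in showing that proper matchable FR-partitions automatically satisfy the edge-home property (Proposition~\ref{prop:matchableedgehome}).
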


\begin{proof}
Let $T \subseteq V(\cH)$ be a vertex cover. We aim to show that it has size at least $2\nu(\cH) = 2\abs{\cF \cup \cR}$. Since the partition is matchable, each of the auxiliary bipartite graphs $B_1$, $B_2$, and $B_3$ have matchings saturating $\cR$, say $M_1$, $M_2$, and $M_3$, respectively. Then each $R = \lset{r_1, r_2, r_3} \in \cR$ has three $W$-vertices, $w^R_i \in V_i$ assigned to it, so that $Rw^R_i \in M_i$, which means that $w^R_i r_j r_k$ are edges for each choice of $\lset{i, j, k} = \lset{1, 2, 3}$. So consider only the edges of this form together with the edges of $\cH|_F$ for each $F \in \cF$. Each set of edges for each $R \in \cR$ and $F \in \cF$ is disjoint from the other sets, so any vertex cover must cover each set with different vertices. Since each such set forms an intersecting $3$-partite $3$-graph with vertex cover number $2$, $T$ must have at least two vertices for each $R \in \cR$ and each $F \in \cF$, giving a total of at least $2\abs{\cR \cup \cF} = 2\nu(\cH)$ vertices as required. This shows $\tau(\cH) \geq 2\nu(\cH)$. Since Ryser's Conjecture is true for $3$-partite $3$-graphs, we have $\tau(\cH) = 2\nu(\cH)$.
\end{proof}

Note that we did not make use of the edge-home property in this proof. This property is necessary however to ensure that if a home-base partition exists, then it is unique. 
Uniqueness is not necessary for our proof of the main theorem, for its proof 
we refer to~\cite{lotharsthesis}.

The definition of home-base hypergraphs together with  
Theorem~\ref{thm:characterization} allows us to prove the Lov\'asz Conjecture for
Ryser-extremal $3$-graphs.

\begin{proof}[Proof of Corollary~\ref{cor:lovasz}] 
By Theorem~\ref{thm:characterization} we have that $\cH$ has a home-base partition 
$({\cal F}, {\cal R}, W)$. Then by definition $|{\cal F}| + |{\cal R}| = \nu (\cH)$. 
Let us now define a pair of vertices from each element of ${\cal F} \cup {\cal R}$.
For each $F\in {\cal F}$ we take any of the partition classes $V_i$ and for 
each $R\in {\cal R}$ we take an arbitrary $2$-element subset $R' \subseteq R$. 
We claim that this system of $\nu(\cH)$ pairwise disjoint pairs of vertices satisfies the
statement of the theorem. 
To check this, suppose we  delete from ${\cal H}$ the union of an arbitrary $k$-set of these 
pairs, say the ones corresponding to 
some subfamilies ${\cal F}' \subseteq {\cal F}$ and ${\cal R}' \subseteq {\cal R}$. 
Consider a maximum matching $M$ in the remaining hypergraph. 
By the edge home property each edge $e$ of $M$ has a ``home'': $e$ is either contained in
some $F\in {\cal F}$ or it has two common vertices with some $R\in {\cal R}$. In either
case the home of $e$ cannot be from ${\cal F}' \cup {\cal R}'$ since  each pair of vertices 
deleted from these sets had a non-empty intersection with any edge of ${\cal H}$
that had its home there. 
Hence the edges of $M$ must have their home among the sets of 
$({\cal F} \setminus {\cal F}') \cup ({\cal R} \setminus {\cal R}')$. 
Since any two edges having the same home intersect, any set from
$({\cal F} \setminus {\cal F}') \cup ({\cal R} \setminus {\cal R}')$
can be home to at most one edge of $M$. 
Hence $|M| \leq  |{\cal F} \cup {\cal R}| -  |{\cal F}' \cup {\cal R}'| = 
\nu (\cH) - k$ and the claim is proved.
\end{proof}

\subsection{Proof Outline}
The main topic of our paper is the proof of Theorem~\ref{thm:characterization}. We have just seen that home-base hypergraphs are Ryser-extremal. The proof of the reverse implication will be done by induction on $\nu(\cH)$.

The case $\nu(\cH) = 0$ is trivial, and even the case $\nu(\cH) = 1$ is not difficult to check. Much of the work involved in proving the cases $\nu(\cH) \geq 2$ consists of finding an appropriate structure to which we can apply induction. That means a subhypergraph $\cH_0 \subseteq \cH$ which also satisfies $\tau(\cH_0) = 2\nu(\cH_0)$ and has $\nu(\cH_0) < \nu(\cH)$. By induction, this will have a home-base partition, but in order to be able to extend this partition to a home-base partition of the whole of $\cH$ we will also need the edges outside of $\cH_0$ to behave nicely.

A more precise description of the structure of the proof is given by the flow chart in Figure~\ref{fig:flowchart}. Please note that it is intended as a guide to be referred to throughout the proof, and many of the terms will only be introduced in later sections.

\begin{figure}
\begin{center}
\begin{tikzpicture}
[auto,
start/.style={rounded corners, draw, thick, fill=green!20},
decision/.style={diamond, aspect=2, draw, thick, fill=yellow!20, align=flush center, inner sep=1pt},
block/.style={rounded corners, draw, thick, fill=blue!20, text width=5em, align=center, minimum height=4em},
line/.style={draw, thick, -latex', rounded corners},
lemma/.style={draw, thick, ellipse, fill=red!20, align=center, minimum height=2em},
hidden/.style={inner sep=0}]

\node[lemma] (topology) at (8,1) {Topology};
\node[start] (start) at (0,.25) {$\tau(\cH) = 2\nu(\cH)$};
\node[lemma] (goodsets) at (1.75,-0.75) {L.~\ref{lem:goodsets}};
\node[lemma] (cpdecomposition) at (8,-0.75) {CP-Decom-\\position};
\node[decision] (goodset?) at (0,-2) {$\exists$ good set?};
\coordinate (goodsetsuse) at (1.75,-2);
\node[block, text width=8em] (link) at (4,-2) {Links have perf. matchings and all min. equinbrd. sets have size $2$};
\node[lemma] (extendtof) at (5.75,-4) {L.~\ref{prop:extendtof}};
\node[lemma] (goodtocromulent) at (-1.25,-5) {L.~\ref{lem:goodtocromulent}};
\coordinate (goodtocromulentuse) at (0,-5);
\node[decision, text width=11em] (disjointedges?) at (4,-5) {};
\node[align=center] (disjointedges?text) at (disjointedges?) {$\exists$ min.\\equinbrd. $X$ with $2$\\disj. edges?};
\coordinate (extendtofuse) at (6.25,-5);
\node[block, text width=6.7em] (fano) at (8,-5) {Min. equinbrd. sets extend to trunc. Fano planes};
\node[lemma] (perfectlycromulent) at (1.75,-7) {L.~\ref{lem:perfectlycromulent}};
\coordinate (twodisjointhyperedgesuse) at (4,-7);
\node[lemma] (twodisjointhyperedges) at (5.25,-7) {L.~\ref{lem:twodisjointhyperedges}};
\coordinate (endgameuse) at (8,-7);
\node[lemma] (endgame) at (9.25,-7) {L.~\ref{lem:endgame}};
\node[block] (pct) at (0,-8.5) {$\exists$ perfectly cromulent triple};
\coordinate (perfectlycromulentuse) at (1.75,-8.5);
\node[block, text width=6em] (ct) at (4,-8.5) {$\exists$ cromulent triple};
\node[block, text width=8em] (neighborhood) at (8,-8.5) {$\exists$ min. equinbrd. set $X$ with $X = N(N(X))$};
\node[lemma] (induction) at (0,-10) {L.~\ref{lem:induction}};
\coordinate (inductionuse) at (1.75,-10);
\node[block, text width=6em, fill=green!20] (homebase) at (4,-11) {$\cH$ is a home-base hypergraph};

\begin{scope}[every path/.style=line]
	\path (start) -- (goodset?);
	\path (goodset?) -- node[pos=0.05] {\textbf{YES}} (pct);
	\path (goodset?) -- node[very near start, below] {\textbf{NO}} (link);
	\path (pct) -- (homebase.west);
	\path (link) -- (disjointedges?);
	\path (disjointedges?) -- node[very near start] {\textbf{YES}} (ct);
	\path (disjointedges?) -- node[very near start, below] {\textbf{NO}} (fano);
	\path (ct) -- (pct);
	\path (fano) -- (neighborhood);
	\path (neighborhood) -- (homebase.east);
\end{scope}
\begin{scope}[every path/.style={line, dashed}]
	\path (goodsets) -- (goodsetsuse);
	\path (goodtocromulent) -- (goodtocromulentuse);
	\path (extendtof) -- (extendtofuse);
	\path (twodisjointhyperedges) -- (twodisjointhyperedgesuse);
	\path (endgame) -- (endgameuse);
	\path (induction) -- (inductionuse);
	\path (perfectlycromulent) -- (perfectlycromulentuse);
	\path (topology) -| (-1.75,-4) -- (goodtocromulent);
	\path (topology) -- (cpdecomposition);
	\path (topology) -- (goodsets);
	\path (topology) -| (9.75,-6) -- (endgame);
	\path (cpdecomposition) -- (goodsets);
	\path (cpdecomposition) -- (9.5,-4) -- (9.5,-6) -- (endgame);
\end{scope}
\end{tikzpicture}
\captionof{figure}{A flow-chart describing the logic of the proof with relevant lemmas shown.} \label{fig:flowchart}
\end{center}
\end{figure}
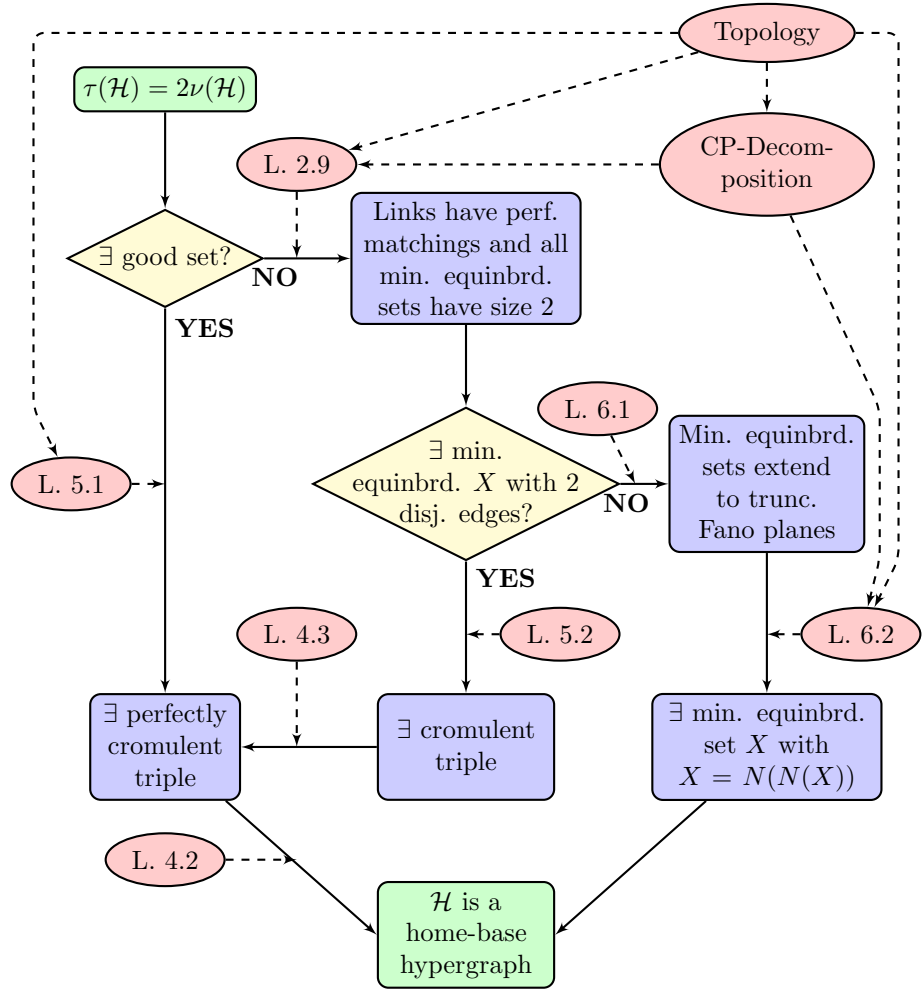

In Section~\ref{sec:graphtheorems}, we collect theorems we have shown in \cite{HNS} about the connectedness of the line graphs of the link graphs of Ryser-extremal $3$-graphs. Among others, this involves a structural characterization of the link graphs, which we call a CP-decomposition, as well as a theorem about bipartite graphs without so-called good sets. Good sets will turn out to be very useful to have in one of the link graphs of a Ryser-extremal $3$-graph, while the lack of good sets in a bipartite graph imposes very strong restrictions on its structure, which will eventually help us to show that we are dealing with a home-base hypergraph.

In Section~\ref{sec:homebasehypergraphs}, we prove some important properties of home-base hypergraphs, which will be essential for several parts of the rest of the proof.

In Section~\ref{sec:cromulenttriples}, we define and study cromulent and perfectly cromulent triples. A perfectly cromulent triple is a set of vertices such that the rest is a home-base hypergraph that interacts with the rest of the edges in a controlled fashion. This turns out to be precisely the substructure we need so that we can extend the home-base partition given by induction to a home-base partition of the whole hypergraph. Cromulent triples are apparently weaker versions of perfectly cromulent triples, but careful considerations will show that no cromulent triple can actually fail to be perfectly cromulent under the assumption that $\tau = 2\nu$. Therefore, it will be enough to find just a cromulent triple in order to show that we have a home-base hypergraph.

In Section~\ref{sec:searchingforcromulenttriples}, we show how to use a good set to find a perfectly cromulent triple and hence conclude that we are dealing with a home-base hypergraph. The rest of Section~\ref{sec:searchingforcromulenttriples} is devoted to exploring how the edges of the link graphs extend to hyperedges under the assumption that there are no good sets and no cromulent triples.

In Section~\ref{sec:theendgame}, we use the information on how the links extend, together with the fact that the links have CP-decompositions to show that the hypergraph must contain a truncated multi-Fano plane that interacts minimally with the rest of the hypergraph, which by induction will have a home-base partition. It is then easy to show that adding the lone $F$ results in a home-base partition of the whole hypergraph.

The proof of Theorem~\ref{thm:characterization} is assembled from all of the theorems and lemmas of the preceeding four sections in Section~\ref{sec:theproofoftheoremcharacterization}.

In Section~\ref{sec:concludingremarks} we prove a couple of facts related to our main theorem, some of them leading to interesting open questions.

\section{Theorems about the link graph} \label{sec:graphtheorems}

In this section we collect theorems that will be used in our arguments. For proofs and references, consult~\cite{HNS}.

The \emph{line graph} $L(\cH)$ of a hypergraph $\cH$ is the simple graph $L(\cH)$ on the vertex set $E(\cH)$ with  $e, f \in V(L(\cH))$ adjacent if $e \cap f \neq \emptyset$.

Recall that the \emph{connectedness} of a graph $G$, denoted $\conn(G)$, is the largest $k$ such that the independence complex of the graph $G$ is $k$-connected. The following theorem in this form is stated in \cite{HNS} as Theorem 1.2.

\begin{thm}{\cite{aharoniberger, aharonihaxell}} \label{thm:matchconn}
Let $\cG$ be an $r$-graph. Then
\[
	\conn(L(\cG)) \geq \frac{\nu(\cG)}{r} - 2.
\]
\end{thm}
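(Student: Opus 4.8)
The plan is to prove the inequality in the form $\conn(L(\cG)) \geq \nu(\cG)/r - 2$ by induction on the number of edges of $\cG$, working throughout with the independence complex $\mathcal{I}(L(\cG))$. (A set of vertices of $L(\cG)$ is independent exactly when the corresponding hyperedges form a matching, so $\mathcal{I}(L(\cG))$ is the matching complex of $\cG$; this reformulation is suggestive but not strictly needed.) The one external ingredient is the standard Mayer--Vietoris type gluing inequality: if a simplicial complex decomposes as $X = X_1 \cup X_2$ with $X_1, X_2$ subcomplexes, then $\conn(X) \geq \min\{\conn(X_1), \conn(X_2), \conn(X_1 \cap X_2) + 1\}$. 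This follows from Mayer--Vietoris for reduced homology together with Seifert--van Kampen and Hurewicz.

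First I would set up the reduction step. Fix a hyperedge $e \in E(\cG)$, viewed as a vertex of $H := L(\cG)$. There is a decomposition $\mathcal{I}(H) = \mathcal{I}(H - e) \cup C$, where $C$ is the cone with apex $e$ over the subcomplex $\{\sigma : \sigma \cup \{e\} \in \mathcal{I}(H)\} = \mathcal{I}(H - N_H[e])$; thus $C$ is contractible, and $\mathcal{I}(H - e) \cap C = \mathcal{I}(H - N_H[e])$. The gluing inequality, applied with the contractible piece $C$, therefore gives
\[
	\conn(\mathcal{I}(L(\cG))) \geq \min\bigl\{\conn(\mathcal{I}(H - e)),\ \conn(\mathcal{I}(H - N_H[e])) + 1\bigr\}.
\]
Now translate back to hypergraphs: $H - e = L(\cG_1)$, where $\cG_1$ is $\cG$ with the hyperedge $e$ deleted, and $H - N_H[e] = L(\cG_2)$, where $\cG_2$ has vertex set $V(\cG) \setminus e$ and edge set exactly the hyperedges of $\cG$ disjoint from $e$. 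Both $\cG_1$ and $\cG_2$ have strictly fewer edges than $\cG$, so the induction hypothesis applies to them; moreover $\nu(\cG_1) \geq \nu(\cG) - 1$ trivially, and $\nu(\cG_2) \geq \nu(\cG) - r$ because deleting the $r$ vertices of $e$ reduces the matching number by at most $r$.

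Then I would run the induction. Base case: if $L(\cG)$ has no edges at all --- equivalently, $\cG$ is a simple matching --- then $\mathcal{I}(L(\cG))$ is the full simplex on the vertex set $E(\cG)$, which is contractible when $E(\cG) \neq \emptyset$ and equals the complex $\{\emptyset\}$ (which is $(-2)$-connected, matching $\nu(\cG)/r - 2 = -2$) when $E(\cG) = \emptyset$; so the bound holds. Inductive step: assume $L(\cG)$ has at least one edge. Then there is a vertex $e$ of $L(\cG)$ with $\nu(\cG - e) = \nu(\cG)$ --- otherwise every hyperedge lies in every maximum matching, so a maximum matching would contain all of $E(\cG)$, forcing $\cG$ to be a simple matching and $L(\cG)$ to be edgeless, a contradiction. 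Delete such an $e$. By the induction hypothesis $\conn(\mathcal{I}(L(\cG_1))) \geq \nu(\cG_1)/r - 2 = \nu(\cG)/r - 2$, and $\conn(\mathcal{I}(L(\cG_2))) \geq \nu(\cG_2)/r - 2 \geq (\nu(\cG) - r)/r - 2 = \nu(\cG)/r - 3$. Substituting into the displayed inequality,
\[
	\conn(\mathcal{I}(L(\cG))) \geq \min\bigl\{\nu(\cG)/r - 2,\ (\nu(\cG)/r - 3) + 1\bigr\} = \nu(\cG)/r - 2,
\]
which closes the induction.

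The gluing inequality and the identifications $H - e = L(\cG_1)$, $H - N_H[e] = L(\cG_2)$ are routine, so the point requiring real care is the arithmetic at the boundary. It is essential to delete a hyperedge that does \emph{not} decrease $\nu(\cG)$: deleting an arbitrary hyperedge yields only $\conn(\mathcal{I}(L(\cG))) \geq \nu(\cG)/r - 2 - 1/r$, and since $\conn$ is an integer this is strictly weaker than $\nu(\cG)/r - 2$ precisely when $\nu(\cG) \equiv 1 \pmod r$ --- which is exactly why the ``$\cG$ is a matching'' case has to be split off and handled directly. I would also fix the standard conventions for the connectedness of low-dimensional and empty complexes so that the base case genuinely matches $\nu(\cG)/r - 2 = -2$ at $\nu(\cG) = 0$; in passing this shows that the additive constant $-2$ in the statement cannot be improved.
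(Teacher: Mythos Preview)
Your argument is correct and is the standard Meshulam-type induction for this statement. However, note that the paper under review does not actually prove Theorem~\ref{thm:matchconn}: it is imported from the companion paper~\cite{HNS} (see the opening of Section~\ref{sec:graphtheorems}: ``For proofs and references, consult~\cite{HNS}''), so there is no in-paper proof to compare against. That said, the argument in~\cite{HNS} follows essentially the same route you take --- the inequality $\conn(\mathcal{I}(H)) \geq \min\{\conn(\mathcal{I}(H-e)),\ \conn(\mathcal{I}(H-N_H[e]))+1\}$ combined with a judicious choice of $e$ --- so your proposal matches the intended proof.

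One minor remark: your observation that one must delete an edge $e$ with $\nu(\cG - e) = \nu(\cG)$ is exactly right and is the only delicate point; the existence of such an $e$ whenever $L(\cG)$ has an edge is the standard ``not every edge lies in every maximum matching'' argument, and you state it correctly.
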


\begin{defn} \label{def:linkg}
Let $\cH$ be a $3$-partite $3$-graph with parts $V_1$, $V_2$, and $V_3$. Let $S \subseteq V_i$ for some $i = 1, 2, 3$. Then the \emph{link graph} $\link{\cH}{S}$ is the bipartite graph with vertex classes $V_j$ and $V_k$ (where $\lset{i, j, k} = \lset{1, 2, 3}$) whose edge multiset is $\set{e \setminus V_i}{e \in E(\cH), e \cap V_i \subseteq S}$.
\end{defn}

\begin{prop}[{\cite[Proposition~2.1]{HNS}}] \label{prop:linkconn}
Let $\cH$ be a $3$-partite $3$-graph with vertex classes $V_1$, $V_2$, and $V_3$. 
Then for each $i \in \lset{1, 2, 3}$ we have the following:
\begin{enumerate}
	\renewcommand{\theenumi}{(\roman{enumi})}
	\renewcommand{\labelenumi}{\theenumi}
	\item \label{linkconn:lower} For all $S \subseteq V_i$ we have
		\[
			\conn(L(\link{\cH}{S})) \geq \frac{\tau(\cH) - (\abs{V_i} - \abs{S})}{2} - 2.
		\]
	\item \label{linkconn:upper} If $\nu({\cal H}) < |V_i|$, then there is some 
	$S \subseteq V_i$ such that
		\[
			\conn(L(\link{\cH}{S})) \leq \nu(\cH) - (\abs{V_i} - \abs{S}) - 2.
		\]
	\item \label{linkconn:size} If $\nu({\cal H}) < |V_i|$, then for every 
	$S \subseteq V_i$ for which the inequality in \ref{linkconn:upper} holds we have
		\[
			\abs{S} \geq \abs{V_i} - (2\nu(\cH) - \tau(\cH)).
		\]
\end{enumerate}
\end{prop}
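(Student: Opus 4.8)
The plan is to prove inequality~\ref{linkconn:lower} first, deduce~\ref{linkconn:size} from it immediately, and then concentrate the real effort on~\ref{linkconn:upper}.

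For~\ref{linkconn:lower} the idea is to turn a vertex cover of a link graph into a vertex cover of $\cH$. Fix $S\subseteq V_i$ and let $C$ be a minimum vertex cover of the bipartite graph $\link{\cH}{S}$. Every edge $e$ of $\cH$ meets $V_i$ in exactly one vertex; if that vertex lies in $V_i\setminus S$ then $e$ is covered by $V_i\setminus S$, and otherwise $e\cap V_i\subseteq S$, so $e\setminus V_i$ is an edge of $\link{\cH}{S}$ and is met by $C$. Hence $C\cup(V_i\setminus S)$ is a vertex cover of $\cH$, and K\"onig's theorem gives $\tau(\cH)\leq\abs{C}+(\abs{V_i}-\abs{S})=\nu(\link{\cH}{S})+(\abs{V_i}-\abs{S})$. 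Feeding this into \thmref{thm:matchconn}, applied to $\link{\cH}{S}$ (a $2$-graph), yields
\[
	\conn(L(\link{\cH}{S}))\;\geq\;\frac{\nu(\link{\cH}{S})}{2}-2\;\geq\;\frac{\tau(\cH)-(\abs{V_i}-\abs{S})}{2}-2 ,
\]
which is~\ref{linkconn:lower}. Then~\ref{linkconn:size} is immediate: for any $S$ for which~\ref{linkconn:upper} holds, put $t:=\abs{V_i}-\abs{S}$ and combine with~\ref{linkconn:lower} to get $(\tau(\cH)-t)/2-2\leq\nu(\cH)-t-2$, i.e.\ $t\leq 2\nu(\cH)-\tau(\cH)$, i.e.\ $\abs{S}\geq\abs{V_i}-(2\nu(\cH)-\tau(\cH))$. (Since~\ref{linkconn:upper} will guarantee such an $S$, parts~\ref{linkconn:lower} and~\ref{linkconn:upper} together re-derive $\tau(\cH)\leq 2\nu(\cH)$ --- the topological content of Aharoni's theorem --- so I must not use that theorem in proving~\ref{linkconn:upper}.)

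The real content is~\ref{linkconn:upper}, for which I would use the defect version of the topological Hall theorem of Aharoni and Haxell. The dictionary I would set up is this: $\cI(L(\link{\cH}{V_i}))$ is the matching complex of the full link graph, its vertex set is in bijection with $E(\cH)$, and $E(\cH)$ is partitioned into the colour classes $W_v:=\set{e\in E(\cH)}{e\cap V_i=\lset{v}}$ over $v\in V_i$; for every $S\subseteq V_i$ the subcomplex of $\cI(L(\link{\cH}{V_i}))$ induced on $\bigcup_{v\in S}W_v$ is exactly $\cI(L(\link{\cH}{S}))$; and a face which is \emph{rainbow} for this colouring --- a set of edges of $\cH$ with pairwise distinct $V_i$-vertices and pairwise disjoint shadows $e\setminus V_i$ --- is the same thing as a matching of $\cH$, so the largest rainbow face has size $\nu(\cH)$. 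Writing $\eta(X):=\conn(X)+2$, the defect Aharoni--Haxell theorem then yields
\[
	\nu(\cH)\;\geq\;\abs{V_i}-\max_{S\subseteq V_i}\bigl(\abs{S}-\eta(\cI(L(\link{\cH}{S})))\bigr) ,
\]
so some $S\subseteq V_i$ satisfies $\abs{S}-\eta(\cI(L(\link{\cH}{S})))\geq\abs{V_i}-\nu(\cH)$, which rearranges into $\conn(L(\link{\cH}{S}))=\eta(\cI(L(\link{\cH}{S})))-2\leq\nu(\cH)-(\abs{V_i}-\abs{S})-2$, the inequality claimed in~\ref{linkconn:upper}.

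I expect the main obstacle to be~\ref{linkconn:upper}, and specifically two things: verifying the dictionary above --- that $\cI(L(\link{\cH}{S}))$ is genuinely the colour-restriction of $\cI(L(\link{\cH}{V_i}))$ and that rainbow faces are exactly matchings of $\cH$ --- and then invoking the defect Aharoni--Haxell theorem in the precise form that makes the $\pm 2$ shift between $\conn$ and $\eta$, together with the $\abs{V_i}-\abs{S}$ term, land exactly on the asserted bound. Parts~\ref{linkconn:lower} and~\ref{linkconn:size} are routine in comparison: the former is the vertex-cover estimate combined with \thmref{thm:matchconn}, and the latter is bookkeeping.
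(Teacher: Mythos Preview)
Your argument is correct. Note, though, that this paper does not itself prove Proposition~\ref{prop:linkconn}: it is imported from the companion paper~\cite{HNS} (Section~\ref{sec:graphtheorems} says ``For proofs and references, consult~\cite{HNS}''), so there is no proof here to compare against directly. What you have written is the standard route --- part~\ref{linkconn:lower} via the cover $C\cup(V_i\setminus S)$, K\"onig, and Theorem~\ref{thm:matchconn}; part~\ref{linkconn:upper} via the deficiency form of the Aharoni--Haxell topological Hall theorem applied to the colouring of $E(\cH)\cong E(\link{\cH}{V_i})$ by $V_i$-vertex, under which rainbow faces of the matching complex of $\link{\cH}{V_i}$ are exactly matchings of $\cH$; part~\ref{linkconn:size} by combining the two --- and this is precisely Aharoni's approach to $\tau\leq 2\nu$ for $3$-partite $3$-graphs, which is what~\cite{HNS} packages in this proposition. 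Your dictionary is accurate and the $\pm 2$ bookkeeping lands where it should.
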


\begin{thm}[{\cite[Theorem~1.4]{HNS}}] \label{thm:connoflink}
If $\cH$ is a $3$-partite $3$-graph with vertex classes $V_1$, $V_2$, and $V_3$, such that $\tau(\cH) = 2\nu(\cH)$, then for each $i$ we have
\begin{enumerate}
	\renewcommand{\theenumi}{(\roman{enumi})}
	\renewcommand{\labelenumi}{\theenumi}
	\item \label{connoflink:conn} $\conn(L(\link{\cH}{V_i})) = \nu(\cH) - 2$.
	\item \label{connoflink:nu} $\nu(\link{\cH}{V_i}) = \tau(\cH)$.
\end{enumerate}
In particular 
\begin{equation}\label{eq:linkconn}
\conn(L(\link{\cH}{V_i})) = \frac{\nu(\link{\cH}{V_i})}{2}  - 2. 
\end{equation} 
\end{thm}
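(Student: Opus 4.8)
The plan is to extract everything from Proposition~\ref{prop:linkconn}, Theorem~\ref{thm:matchconn}, and K\"onig's theorem; the whole point is that the extremality hypothesis $\tau(\cH)=2\nu(\cH)$ makes the ``defect'' $2\nu(\cH)-\tau(\cH)$ vanish, which collapses the inequalities of Proposition~\ref{prop:linkconn} onto the single set $S=V_i$ and pins down everything exactly.

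First I would establish~\ref{connoflink:conn}. Fix $i$. By Proposition~\ref{prop:linkconn}\,\ref{linkconn:upper} there is an $S\subseteq V_i$ with $\conn(L(\link{\cH}{S}))\le \nu(\cH)-(\abs{V_i}-\abs{S})-2$, and by Proposition~\ref{prop:linkconn}\,\ref{linkconn:size} any such $S$ satisfies $\abs{S}\ge \abs{V_i}-(2\nu(\cH)-\tau(\cH))=\abs{V_i}$, so $S=V_i$. This gives $\conn(L(\link{\cH}{V_i}))\le \nu(\cH)-2$. Conversely, Proposition~\ref{prop:linkconn}\,\ref{linkconn:lower} applied to $S=V_i$ gives $\conn(L(\link{\cH}{V_i}))\ge \frac{\tau(\cH)}{2}-2=\nu(\cH)-2$. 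The two bounds coincide, which is~\ref{connoflink:conn}.

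Next I would prove~\ref{connoflink:nu}. For the upper bound, I regard $\link{\cH}{V_i}$ as a $2$-graph (all its edges have size exactly $2$ since $\cH$ is $3$-partite) and apply Theorem~\ref{thm:matchconn}, obtaining $\conn(L(\link{\cH}{V_i}))\ge \frac{\nu(\link{\cH}{V_i})}{2}-2$; combined with~\ref{connoflink:conn} this yields $\nu(\cH)-2\ge \frac{\nu(\link{\cH}{V_i})}{2}-2$, i.e.\ $\nu(\link{\cH}{V_i})\le 2\nu(\cH)=\tau(\cH)$. For the lower bound I would invoke K\"onig's theorem on the bipartite graph $\link{\cH}{V_i}$, so that it suffices to show $\tau(\link{\cH}{V_i})\ge \tau(\cH)$. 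Take a minimum vertex cover $C\subseteq V_j\cup V_k$ of $\link{\cH}{V_i}$. Every edge $e$ of $\cH$ trivially satisfies $e\cap V_i\subseteq V_i$, so by Definition~\ref{def:linkg} its projection $e\setminus V_i$ is an edge of $\link{\cH}{V_i}$; hence $C$ meets $e\setminus V_i$ and therefore meets $e$. Thus $C$ is a vertex cover of $\cH$, so $\abs{C}\ge\tau(\cH)$, and $\nu(\link{\cH}{V_i})=\tau(\link{\cH}{V_i})\ge\tau(\cH)$. Together with the upper bound this proves~\ref{connoflink:nu}, and then \eqref{eq:linkconn} is immediate by substituting $\nu(\link{\cH}{V_i})=\tau(\cH)=2\nu(\cH)$ into~\ref{connoflink:conn}.

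There is no serious obstacle here: the substantive work lies in the results of~\cite{HNS} quoted above, and the only thing to watch is the bookkeeping — in particular noticing that $2\nu(\cH)-\tau(\cH)=0$ is exactly what forces $S=V_i$ in Proposition~\ref{prop:linkconn}\,\ref{linkconn:size} and makes the sandwich of bounds tight. If anything needs a word of care it is the routine remark that K\"onig's theorem may be applied to $\link{\cH}{V_i}$ despite its edge set being formally a multiset, since $\nu$ and $\tau$ depend only on the underlying simple bipartite graph.
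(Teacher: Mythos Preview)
Your argument is correct. The paper does not give its own proof of this statement; Section~\ref{sec:graphtheorems} merely quotes Theorem~\ref{thm:connoflink} from~\cite{HNS} and refers the reader there for a proof. Your derivation from Proposition~\ref{prop:linkconn}, Theorem~\ref{thm:matchconn}, and K\"onig's theorem is exactly the natural one and is presumably what~\cite{HNS} does as well, since those are precisely the tools that paper develops for this purpose.
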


\begin{thm}[{\cite[Theorem~1.5]{HNS}}] \label{thm:cpdecomposition}
Let $G$ be a bipartite graph. Then we have $\conn(L(G)) = \frac{\nu(G)}{2} - 2$ if and only if $G$ has a collection of $\nu(G)/2$ pairwise vertex-disjoint subgraphs, each of them a $C_4$ or a $P_4$, such that every edge of $G$ is parallel to an edge of one of the $C_4$'s or is incident to an interior vertex of one of the $P_4$'s.
\end{thm}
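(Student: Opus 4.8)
The plan is to transfer everything to the \emph{matching complex} $M(G) := \cI(L(G))$, whose faces are exactly the matchings of $G$. By \thmref{thm:matchconn} (with $r=2$) one always has $\conn(M(G)) \ge \nu(G)/2 - 2$; writing $k := \nu(G)/2$ (and noting that for odd $\nu(G)$ both sides of the claimed equivalence fail), the asserted equality therefore amounts to $M(G)$ \emph{failing} to be $(k-1)$-connected, which — since $M(G)$ is $(k-2)$-connected — is detected by $\widetilde H_{k-1}(M(G)) \ne 0$ via the Hurewicz theorem (with the customary caveat at $k=2$, where one argues with $\pi_1$ directly). The numerology is explained by the following model: $M$ sends a disjoint union of graphs to the join of the corresponding complexes, $M(C_4) \simeq M(P_4) \simeq S^0$, and the $k$-fold join $(S^0)^{*k}$ is the $(k-1)$-dimensional octahedral sphere, of connectedness $k-2$. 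Thus $k$ vertex-disjoint copies of $C_4$ or $P_4$ already pin $\conn(M(G))$ down to $k-2$; the theorem asserts that the extra edges permitted by the covering condition can neither spoil this nor occur in any other configuration.

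For the direction ``$\Leftarrow$'' I would make the obstruction explicit. Given a decomposition into pieces $H_1,\dots,H_k$ with the covering property, define a labelling $\ell\colon E(G)\to\{+_i,-_i : 1\le i\le k\}$ by: inside a $C_4$ piece $H_i$ with cyclic vertex order $v_1,v_2,v_3,v_4$ set $\ell(v_1v_2)=\ell(v_3v_4)=+_i$ and $\ell(v_2v_3)=\ell(v_4v_1)=-_i$; inside a $P_4$ piece $H_i$ with consecutive vertices $v_1,v_2,v_3,v_4$ set $\ell(v_1v_2)=\ell(v_3v_4)=+_i$ and $\ell(v_2v_3)=-_i$; let every edge parallel to a $C_4$-edge inherit that edge's label, and every non-piece edge incident to an interior vertex of a $P_4$ piece $H_i$ receive the label $+_i$. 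The covering condition makes this exhaustive and consistent (vertex-disjointness of the pieces prevents an edge from being of two conflicting types, and it forces each $C_4$ piece to span no ``diagonal'' edge). A short local check — inside the vertex set of a $C_4$ piece the only two disjoint edges one can span carry equal labels, and inside a $P_4$ piece a matching using the middle edge $v_2v_3$ uses no other pair-$i$ edge — shows that $\ell$ carries every matching of $G$ to a face of the octahedral sphere $\mathcal{O}_k := (\partial\Delta^1)^{*k}\cong S^{k-1}$ whose vertices are the $\pm_i$ and whose faces meet each pair $\{+_i,-_i\}$ in at most one point. Hence $\ell$ induces a simplicial map $\phi\colon M(G)\to\mathcal{O}_k$. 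Its restriction to $M(H_1\sqcup\dots\sqcup H_k)=M(H_1)*\dots*M(H_k)$ is the join of the degree-$(\pm1)$ collapses $M(H_i)\simeq S^0\to\{+_i,-_i\}$, hence a homotopy equivalence onto $\mathcal{O}_k$, so it is nonzero on $\widetilde H_{k-1}$. Since this restriction factors the inclusion $M(H_1\sqcup\dots\sqcup H_k)\hookrightarrow M(G)$, the fundamental class of $\mathcal{O}_k$ pushes forward to a nonzero element of $\widetilde H_{k-1}(M(G))$; combined with \thmref{thm:matchconn}, this gives $\conn(M(G))=k-2=\nu(G)/2-2$.

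For the harder direction ``$\Rightarrow$'' I would induct on $|V(G)|+|E(G)|$, showing that $\conn(M(G))=\nu(G)/2-2$ forces a decomposition. Isolated vertices are irrelevant to $M(G)$, to $\nu(G)$, and to the existence of a decomposition, so I assume there are none; if $G$ is a disjoint union of single edges then $M(G)$ is a nonempty simplex and $\conn(M(G))=\infty>\nu(G)/2-2$, so there is nothing to prove; and parallel edges can be shed one at a time via the homotopy equivalence $M(G)\simeq M(G-e')\vee\Sigma M(G-\{x,y\})$, valid whenever $e'$ is parallel to $e=xy$ (the inclusion $\lk_{M(G-e')}(e')=M(G-\{x,y\})\hookrightarrow M(G-e')$ is null-homotopic, factoring through the contractible $\St_{M(G-e')}(e)$), together with the induction hypothesis applied to each summand. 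So assume $G$ is simple and has no isolated vertices; then it has an \emph{inessential} edge $e=xy$, i.e.\ $\nu(G-e)=\nu(G)$. Now $M(G)$ is the mapping cone of $M(G-\{x,y\})=\lk_{M(G)}(e)\hookrightarrow M(G-e)$, since $\St_{M(G)}(e)$ is contractible and meets $M(G-e)$ in $M(G-\{x,y\})$. As $\conn(M(G-e))\ge\nu(G-e)/2-2=k-2$, the relevant part of the long exact sequence of this cofibration reads
\[
\widetilde H_{k-1}(M(G-e)) \longrightarrow \widetilde H_{k-1}(M(G)) \longrightarrow \widetilde H_{k-2}(M(G-\{x,y\})) \longrightarrow 0 ,
\]
so $\widetilde H_{k-1}(M(G))\ne0$ forces $\widetilde H_{k-1}(M(G-e))\ne0$ or $\widetilde H_{k-2}(M(G-\{x,y\}))\ne0$. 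In the first case $\conn(M(G-e))=k-2=\nu(G-e)/2-2$, so by induction $G-e$ has a $k$-piece decomposition, which must then be repaired so as to also cover $e$. In the second case, $\conn(M(G-\{x,y\}))\ge\nu(G-\{x,y\})/2-2\ge k-3$ together with $\widetilde H_{k-2}\ne0$ forces $\nu(G-\{x,y\})=\nu(G)-2$ and $\conn(M(G-\{x,y\}))=k-3=\nu(G-\{x,y\})/2-2$, so by induction $G-\{x,y\}$ has a $(k-1)$-piece decomposition, to which one must append a $k$-th piece built from $x$, $y$ and the edges of $G$ at $x$ or $y$ — for instance a path $z-x-y-w$ with $z,w$ in the leftover set of the smaller decomposition; here bipartiteness is essential, as it rules out $z=w$, i.e.\ a triangle on $\{x,y,z\}$.

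The main obstacle is precisely this repair/reconstruction step, which is also the only place bipartiteness enters. Once induction supplies a decomposition of $G-e$ (or of $G-\{x,y\}$), one must accommodate the edges at $x$ and $y$ for \emph{every} way they can attach to the existing pieces and to the leftover set, and when the naive fix fails — an edge at $x$ landing inside a piece, or $x$ or $y$ having no neighbour in the leftover set — one has to choose the inessential edge $e$ more carefully or reorganize the smaller decomposition, keeping track throughout of which edges are $C_4$-type versus $P_4$-type. Carrying out this case analysis under the single hypothesis $\conn(M(G))=\nu(G)/2-2$ is where essentially all of the work lies; it is the structural analysis performed in the companion paper.
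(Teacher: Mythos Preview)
Your proof of the ``if'' direction is correct and genuinely different from the paper's. You build a simplicial map $\phi\colon M(G)\to\mathcal{O}_k$ by labelling edges according to the CP-decomposition, and show it is nonzero on $\widetilde H_{k-1}$ by factoring through the join $M(H_1)*\cdots*M(H_k)\simeq S^{k-1}$. This is a direct, self-contained topological obstruction argument. The paper instead takes an indirect route: given a CP-decomposition of $G$, it \emph{constructs} a home-base hypergraph $\cH$ with $\link{\cH}{V_3}=G$ (each $C_4$ becomes an $F\in\cF$, each $P_4$ becomes an $R\in\cR$ with its interior vertices as the $R$-vertices), then invokes Proposition~\ref{prop:homebasetight} to get $\tau(\cH)=2\nu(\cH)$ and finally Theorem~\ref{thm:connoflink} to read off $\conn(L(G))=\nu(G)/2-2$. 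Your approach is more elementary and avoids the Ryser machinery entirely; the paper's approach, while circuitous, has the payoff of exhibiting every CP-decomposable bipartite graph as a link of a Ryser-extremal $3$-graph, which is thematically natural here. One small point: when an edge is incident to interior vertices of two different $P_4$ pieces, your labelling rule is ambiguous, but either choice works since the $-_i$ edge is the middle edge $v_2v_3$ and every edge through $v_2$ or $v_3$ meets it.

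For the ``only if'' direction, you correctly set up the Mayer--Vietoris/cofibration framework and identify the two inductive subcases, but you are candid that the repair step---accommodating the edge $e$ or the vertices $x,y$ into an existing decomposition---is where the real work lies and is carried out in the companion paper. This paper takes the same stance: only the ``if'' direction is proved here, and the ``only if'' direction is quoted from \cite{HNS}. So on that front you and the paper are aligned, though it is worth noting that the actual argument in \cite{HNS} proceeds via a more refined notion of CP-decomposition tied to the structure of the independence complex, rather than the bare long exact sequence you sketch.
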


We refer to such a collection as a \emph{CP-decomposition}. Note that this is just a specialization of the concept of CP-decomposition in~\cite{HNS} for the entire line graph, which is the only case we will need in this paper. As promised in~\cite{HNS}, the ``if'' direction of this theorem will be proved in this paper. We will postpone the proof until Section~\ref{sec:concludingremarks}, as it is not necessary for the proof of the main theorem.

For a subset $X$ of the vertices of a graph, we denote the \emph{neighborhood} of $X$ by $N(X)$, meaning the set of vertices adjacent to some vertex in $X$.

\begin{defn} \label{def:decent}
Let $G$ be a bipartite graph with vertex classes $A$ and $B$. A subset $X \subseteq B$ is called \emph{decent} if it satisfies the following conditions:
\begin{enumerate}
	\renewcommand{\theenumi}{(\arabic{enumi})}
	\renewcommand{\labelenumi}{\theenumi}
	\item \label{decent:neighborhood} $\abs{N(X)} \leq \abs{X}$,
	\item \label{decent:nu} $\nu(G) = \abs{N(X)} + \abs{B \setminus X}$,
	\item \label{decent:matchings} For every $x \in X$ and $y\in N(x)$ the edge $xy$ participates in a maximum matching of $G$.
\end{enumerate}
\end{defn}

\begin{defn}
Let $G$ be a bipartite graph. A subset $X$ of a vertex class of $G$ is called \emph{equineighbored} if $X$ is nonempty and $\abs{N(X)} = \abs{X}$.
\end{defn}

\begin{defn} \label{def:good}
Let $G$ be a bipartite graph with vertex classes $A$ and $B$. A subset $X \subseteq B$ is called \emph{good} if it is decent, and if for all $y \in N(X)$ we have $\conn \left( L \left( G - \set{yz \in E(G)}{z \in B \setminus X} \right) \right) > \conn(L(G))$.
\end{defn}

Note in particular that if $X$ is good, then $\set{yz \in E(G)}{z \in B \setminus X} \neq \emptyset$ for all $y \in N(X)$.

\begin{lem}[{\cite[Lemma~4.7]{HNS}}] \label{lem:goodsets}
Let $G$ be a bipartite graph with vertex classes $A$ and $B$. Suppose $\nu(G) = 2k$ for some integer $k$ and $\conn(L(G)) = k - 2$. If $G$ has no good set in $A$ nor in $B$, then the following hold:
\begin{enumerate}
	\renewcommand{\theenumi}{(\roman{enumi})}
	\renewcommand{\labelenumi}{\theenumi}
	\item \label{goodsets:perfect} $G$ has a perfect matching
	\item \label{goodsets:minimal} For every minimal equineighbored subset $X \subseteq A$ or $X \subseteq B$ we have $\abs{X} = 2$. In particular, $G[X \cup N(X)]$ is a $C_4$ (possibly with parallel edges).
\end{enumerate}
\end{lem}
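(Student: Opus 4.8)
The plan is to prove the contrapositive: assuming $G$ has no good set in $A$ and none in $B$, we establish \ref{goodsets:perfect} and \ref{goodsets:minimal}. Since $\conn(L(G)) = \nu(G)/2 - 2$, the ``only if'' direction of Theorem~\ref{thm:cpdecomposition} (established in~\cite{HNS}) supplies a CP-decomposition $\cD$ of $G$: a family of $k := \nu(G)/2$ pairwise vertex-disjoint $C_4$'s and $P_4$'s covering every edge. We may assume $G$ has no isolated vertices, as these affect neither $L(G)$, $\nu(G)$, nor the structures in question. If $\cD$ happens to contain no $P_4$, then every edge is parallel to an edge of one of the disjoint $C_4$'s, so $G$ has no vertex and no edge outside $\bigcup\cD$ and is therefore a disjoint union of $k$ copies of a $C_4$ with parallel edges; for such $G$ both conclusions are immediate --- choosing one of the two matchings inside each copy gives a perfect matching, and any equineighbored set, lying inside a single component, must be one of that component's two two-element sides, which induces a $C_4$. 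So the whole lemma reduces to: \emph{if $G$ fails \ref{goodsets:perfect} or fails \ref{goodsets:minimal}, then $G$ has a good set.}

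The engine for the reduced statement is a two-step pattern: first locate a \emph{decent} set, then upgrade it to a good one. If $G$ has no perfect matching, then since $\abs{A},\abs{B}\ge\nu(G)$ there is positive K\"onig deficiency on one side, say on $B$; I would take $X\subseteq B$ an inclusion-minimal set with $\abs{X}-\abs{N(X)}$ maximum. Conditions \ref{decent:neighborhood} and \ref{decent:nu} of decency are then immediate from the defect form of K\"onig's theorem, and \ref{decent:matchings} follows from a Dulmage--Mendelsohn / alternating-path argument: minimality forces $N(X\setminus\{x\})=N(X)$ for all $x\in X$, which makes $G[X\cup N(X)]$ ``elementary from the $N(X)$ side'' (every nonempty $Z\subseteq N(X)$ has more than $\abs{Z}$ neighbours inside $G[X\cup N(X)]$), so any edge of $G[X\cup N(X)]$ lies in a matching there saturating $N(X)$, and this extends to a maximum matching of $G$ because the deficiency outside $X$ vanishes. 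If instead $G$ has a perfect matching but fails \ref{goodsets:minimal}, pick a minimal equineighbored $X$, say in $B$, with $\abs{X}\neq 2$; now \ref{decent:nu} is free because $\abs{B}=\nu(G)$, and \ref{decent:matchings} holds because a perfect matching of $G$ must send $X$ bijectively onto $N(X)$ and therefore restricts to perfect matchings of $G[X\cup N(X)]$ and of its complement. So in both cases we obtain a decent set $X$.

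It remains to show such a decent $X\subseteq B$ is good, i.e.\ that for each $y\in N(X)$ the graph $G_y := G - \{\,yz : z\in B\setminus X\,\}$ satisfies $\conn(L(G_y))>\conn(L(G))$. The matching side is routine: the maximum matching witnessing \ref{decent:matchings} matches $y$ into $X$, hence survives in $G_y$, so $\nu(G_y)=\nu(G)$ and Theorem~\ref{thm:matchconn} gives $\conn(L(G_y))\ge\nu(G)/2-2=\conn(L(G))$. By Theorem~\ref{thm:cpdecomposition} the remaining, strict inequality is equivalent to: \emph{$G_y$ has no CP-decomposition}. When $\abs{X}=1$ this is clear --- then $y$ and the single vertex of $X$ both have degree $1$ in $G_y$ and form a $K_2$ component, which no $C_4$ or $P_4$ of a CP-decomposition can cover. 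The remaining situations ($X$ of size $\ge 3$, and $X$ a minimal deficiency set) require showing that the rigid, ``indecomposable'' subgraph $G[X\cup N(X)]=G_y[X\cup N(X)]$ cannot be carved up by the four-vertex pieces of a CP-decomposition of $G_y$.

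The main obstacle is exactly this last point: ruling out a CP-decomposition of $G_y$ when $\abs{X}\ge 3$ or $X$ is a deficiency set. Everything upstream (preserving $\nu$, invoking Theorems~\ref{thm:matchconn} and~\ref{thm:cpdecomposition}) is soft, but the core combinatorial statement --- that an elementary bipartite graph with both sides of size $\ge 3$, or the tight structure forced by a minimal maximum-deficiency set, is incompatible with being covered by vertex-disjoint $C_4$'s and $P_4$'s in the ``parallel-or-incident-to-an-interior-vertex'' sense --- needs a careful case analysis of how those four-vertex pieces can meet $X\cup N(X)$. Together with the alternating-path bookkeeping needed to route maximum matchings through prescribed edges in the decency step, this is where the bulk of the work lies.
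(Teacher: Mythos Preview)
This lemma is not proved in the present paper at all: it is imported from \cite{HNS} (as Lemma~5.7 there), and the opening of Section~\ref{sec:graphtheorems} explicitly says ``For proofs and references, consult~\cite{HNS}.'' There is therefore no in-paper proof to compare your proposal against.

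On your sketch itself, two points. First, you yourself identify the gap: the entire content of the lemma lies in ruling out a CP-decomposition of $G_y$ when $\abs{X}\ge 3$ or when $X$ is a minimal maximum-deficiency set, and your proposal stops at ``this is where the bulk of the work lies.'' Everything before that---preserving $\nu$, invoking Theorems~\ref{thm:matchconn} and~\ref{thm:cpdecomposition}, checking decency---is soft, so what you have written is a framework rather than a proof. Second, there is a real circularity risk. You are invoking the ``only if'' direction of Theorem~\ref{thm:cpdecomposition}, which is also proved in~\cite{HNS}; given that Lemma~\ref{lem:goodsets} appears as Lemma~5.7 there while Theorem~\ref{thm:cpdecomposition} is Theorem~1.5, it is quite possible that in~\cite{HNS} the characterization of CP-decomposable graphs is established \emph{using} Lemma~\ref{lem:goodsets} rather than the reverse. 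Before pursuing this route you would need to check the logical dependencies in~\cite{HNS}; if the dependence goes the wrong way, your reduction collapses.
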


Note that the minimality requirement in~\ref{goodsets:minimal} is well-defined because by~\ref{goodsets:perfect} both $A$ and $B$ are equineighbored.

\section{Properties of Home-Base Hypergraphs} \label{sec:homebasehypergraphs}

The next couple of sections will establish some basic properties of home-base hypergraphs that we will need in the proof of Theorem~\ref{thm:characterization}.

First is the so-called ``monster lemma,'' which states under which conditions a monster can eat some vertices of a home-base hypergraph without reducing the matching number.

But before we can prove it, we shall need some definitions.

\subsection{Essential and Superfluous Vertices}

\begin{defn}
Let $G$ be a bipartite graph with vertex classes $X_1$ and $X_2$. A subset $C \subseteq X_i$ is called \emph{essential} if there is a subset $U \subseteq X_{3 - i}$ with $\abs{U} = \abs{C}$ and $C = N(U)$.
\end{defn}

We remark briefly that non-empty essential subsets are precisely the neighborhoods of equineighbored subsets. We will of course apply this concept to the bipartite graphs $B_i$ from the matchability criterion of FR-partitions.

Let $\cH$ be a $3$-partite $3$-graph on vertex classes $V_1$, $V_2$, and $V_3$ with a matchable FR-partition $(\cF, \cR, W)$. We call a vertex $v$ in $V_i$ essential if $v \in W$ and $\lset{v} \subseteq W \cap V_i$ is essential in $B_i$. If $R \in \cR$ has only $v \in W \cap V_i$ as its neighbor in $B_i$, then we say $v$ is essential for $R$.

\begin{lem} \label{lem:essential}
Let $B$ be a bipartite graph with vertex classes $\cR$ and $W$, which has a matching saturating $\cR$. Then $W$ contains a unique maximal essential subset.
\end{lem}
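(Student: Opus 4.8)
The plan is to show that the union of two essential subsets is essential, which immediately gives the existence of a unique maximal essential subset (take the union of all essential subsets of $W$; since $W$ is finite this is a finite union, so it is essential, and it clearly contains every essential subset). So the whole task reduces to the following closure property: if $C, C' \subseteq W$ are essential, then $C \cup C'$ is essential.

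To prove this, recall that (by the remark following the definition) nonempty essential subsets of $W$ are exactly the sets of the form $N(U)$ for equineighbored $U \subseteq \cR$, i.e.\ $U \neq \emptyset$ with $\abs{N(U)} = \abs{U}$. Write $C = N(U)$ and $C' = N(U')$ with $U, U'$ equineighbored. The natural candidate is to show that $U \cup U'$ is equineighbored and that $N(U \cup U') = C \cup C'$. The inclusion $N(U \cup U') = N(U) \cup N(U') = C \cup C'$ is automatic, so it remains to check $\abs{N(U \cup U')} = \abs{U \cup U'}$. Here I would invoke the submodularity of the neighborhood function on a bipartite graph: $\abs{N(U \cup U')} + \abs{N(U \cap U')} \le \abs{N(U)} + \abs{N(U')}$. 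Combined with $\abs{N(U \cup U')} \ge \abs{U \cup U'}$ (which holds because $B$ has a matching saturating $\cR$, so by Hall's condition every subset of $\cR$ has at least as large a neighborhood) and $\abs{N(U \cap U')} \ge \abs{U \cap U'}$ (same reason, valid also when $U \cap U' = \emptyset$), together with $\abs{U \cup U'} + \abs{U \cap U'} = \abs{U} + \abs{U'} = \abs{N(U)} + \abs{N(U')}$, we get a chain of inequalities that must all be equalities; in particular $\abs{N(U \cup U')} = \abs{U \cup U'}$, so $U \cup U'$ is equineighbored and hence $C \cup C' = N(U \cup U')$ is essential. The empty set is essential by definition (take $U = \emptyset$), so this covers all cases.

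The main obstacle — really the only thing requiring care — is making sure the Hall-type lower bounds $\abs{N(T)} \ge \abs{T}$ are available for all relevant $T \subseteq \cR$, including the possibly empty $U \cap U'$; this is exactly what the hypothesis that $B$ has a matching saturating $\cR$ buys us via Hall's theorem. Everything else is the standard submodularity argument for the neighborhood function, and the finiteness of $W$ then delivers the unique maximal essential subset as the union of all essential subsets.
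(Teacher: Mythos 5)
Your proof is correct and follows essentially the same route as the paper: both reduce to showing that the union of two essential sets is essential, and both establish this by pairing Hall's condition (available since $B$ has a matching saturating $\cR$) with an inclusion--exclusion/submodularity count on neighborhoods to force $\abs{N(U \cup U')} = \abs{U \cup U'}$. The only cosmetic difference is that you phrase the counting step as submodularity of $N(\cdot)$, while the paper writes out the inclusion--exclusion identity directly.
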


\begin{proof}
Let $C_1, C_2 \subseteq W$ be essential. Then we claim $C_1 \cup C_2$ is also essential. Consider $\cU_1, \cU_2 \subseteq \cR$ such that $C_1 = N_B(\cU_1)$, $C_2 = N_B(\cU_2)$, $\abs{\cU_1} = \abs{C_1}$ and $\abs{\cU_2} = \abs{C_2}$. Then $N_B(\cU_1 \cup \cU_2) = C_1 \cup C_2$ and by Hall's Theorem, $\abs{C_1 \cup C_2} \geq \abs{\cU_1 \cup \cU_2}$. But of course $N_B(\cU_1 \cap \cU_2) \subseteq C_1 \cap C_2$ and thus again by Hall's Theorem, $\abs{C_1 \cap C_2} \geq \abs{\cU_1 \cap \cU_2}$. By the inclusion-exclusion principle, we thus have $\abs{C_1} + \abs{C_2} - \abs{C_1 \cup C_2} \geq \abs{\cU_1} + \abs{\cU_2} - \abs{\cU_1 \cup \cU_2}$, and since $\abs{\cU_1} = \abs{C_1}$ and $\abs{\cU_2} = \abs{C_2}$, we find that $\abs{C_1 \cup C_2} \leq \abs{\cU_1 \cup \cU_2}$, so that in fact there is equality. This proves that $C_1 \cup C_2$ is essential. Therefore the union over all essential subsets of $W$ gives the unique maximal essential set.
\end{proof}

A vertex of $W$ which is not in the maximal essential set is called \emph{superfluous}. Note that any one superfluous vertex can be removed, and the rest of the bipartite graph will still have a matching saturating $\cR$. Again, we will apply this to the bipartite graphs $B_i$ from the matchability criterion of FR-partitions.

Let $\cH$ be a home-base hypergraph on vertex classes $V_1$, $V_2$, and $V_3$ with a home-base partition $(\cF, \cR, W)$. Then the auxiliary bipartite graphs $B_i$ have vertex classes $\cR$ and $W \cap V_i$ and a matching saturating $\cR$. Therefore, each $W \cap V_i$ contains a unique maximum essential subset $C_i$, and we may call a vertex of $V_i$ superfluous if it is in $W \cap V_i \setminus C_i$. Clearly superfluous vertices are non-essential $W$-vertices in a stronger form. We can make the following observation:

\begin{obs} \label{obs:superfluous}
Let $\cH$ be a $3$-partite $3$-graph with a matchable FR-partition $(\cF, \cR, W)$, and let $S \subseteq W$ be a set of superfluous vertices with at most one vertex in each vertex class. Then $(\cF, \cR, W \setminus S)$ is a matchable FR-partition of $\cH - S$.
\end{obs}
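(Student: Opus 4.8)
The plan is to verify the four conditions in Definition~\ref{def:frpartition} one at a time for the triple $(\cF, \cR, W \setminus S)$ with respect to the hypergraph $\cH - S$, and then check matchability separately. First I would observe that since $S \subseteq W$, removing $S$ changes neither $\cF$ nor $\cR$, nor any induced subhypergraph $\cH|_F$ for $F \in \cF$ or the triples $R \in \cR$; it only shrinks the $W$-part of the vertex partition by exactly the set $S$. Thus condition~\ref{fr:partition} holds because $\cF \cup \cR \cup \lset{W}$ partitioned $V(\cH)$ and we are simply deleting the vertices $S$ from the block $W$, so $\cF \cup \cR \cup \lset{W \setminus S}$ partitions $V(\cH) \setminus S = V(\cH - S)$. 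Conditions~\ref{fr:f} and~\ref{fr:r} are immediate: each $\cH|_F$ and each $R$ are untouched since they lie in $V(\cH) \setminus W \subseteq V(\cH) \setminus S$, so $(\cH-S)|_F = \cH|_F$ is still a truncated multi-Fano plane and each $R \in \cR$ is still a transversal triple. Condition~\ref{fr:size} requires $\abs{\cF \cup \cR} = \nu(\cH - S)$; since $\abs{\cF \cup \cR} = \nu(\cH)$ and trivially $\nu(\cH - S) \le \nu(\cH)$, it suffices to exhibit a matching of size $\abs{\cF \cup \cR}$ in $\cH - S$, which I will do as a by-product of the matchability argument below.

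For matchability, I need to produce, for each $i$, a matching saturating $\cR$ in the auxiliary bipartite graph $B_i'$ of the new partition, where $B_i'$ has vertex classes $\cR$ and $(W \setminus S) \cap V_i$. Note $B_i'$ is exactly $B_i$ with the vertices of $S \cap V_i$ deleted (deleting $W$-vertices does not create or destroy edges among the remaining ones, and no edge of $\cH$ incident to a vertex of $S$ survives in $\cH - S$, but such edges were never counted in $B_i$ in a way that matters since $B_i$'s edges record only the existence of \emph{some} edge of $\cH$ through $w$ and two vertices of $R$ — here I would be slightly careful: deleting the vertex $S \cap V_i$ from $\cH$ could in principle remove the unique witnessing edge, but since $S \cap V_i$ has at most one vertex, say $s$, and edges of $\cH - S$ through another $w \ne s$ and two vertices of $R$ are unaffected, $B_i'$ is precisely $B_i - (S \cap V_i)$). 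By hypothesis $S$ contains at most one vertex in each $V_i$, so $B_i'$ is obtained from $B_i$ by deleting at most one superfluous vertex. By the remark following Lemma~\ref{lem:essential} — that deleting a single superfluous vertex leaves a bipartite graph with a matching saturating $\cR$ — each $B_i'$ has a matching $M_i'$ saturating $\cR$. This establishes matchability of $(\cF, \cR, W \setminus S)$.

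Finally, to close condition~\ref{fr:size}, I use the matchings $M_1', M_2', M_3'$ together with the edges of $\cH|_F$: exactly as in the proof of Proposition~\ref{prop:homebasetight}, each $R \in \cR$ together with $M_i'$ supplies genuine edges of $\cH - S$ (they avoid $S$ since their $W$-vertices lie in $W \setminus S$ and their $R$-vertices avoid $W$), and picking one such edge per $R$ plus one edge of each $\cH|_F$ yields a matching of size $\abs{\cF \cup \cR}$ in $\cH - S$; combined with $\nu(\cH - S) \le \nu(\cH) = \abs{\cF \cup \cR}$ this gives $\nu(\cH - S) = \abs{\cF \cup \cR}$. The only mildly delicate point — and the one I would state most carefully — is the identification $B_i' = B_i - (S \cap V_i)$, i.e.\ that deleting a single $W$-vertex from $\cH$ really does just delete the corresponding vertex from the auxiliary graph and nothing more; everything else is bookkeeping. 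Note that this observation is the natural tool for ``shrinking'' $W$ when searching for a smaller Ryser-extremal subhypergraph for the induction, which is presumably why it is recorded here.
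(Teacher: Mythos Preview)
Your proposal is correct and follows the same approach as the paper's own proof, which is a terse two-sentence argument noting that removing a single superfluous vertex from $B_i$ preserves a matching saturating $\cR$ and that the three $B_i$'s are affected independently. You are in fact more careful than the paper: you explicitly verify condition~\ref{fr:size} by exhibiting a matching of size $\abs{\cF \cup \cR}$ in $\cH - S$, and you justify the identification $B_i' = B_i - (S \cap V_i)$ (the witnessing hyperedge for an $Rw$-edge with $w \notin S$ has its other two vertices in $R \subseteq V(\cR)$, hence outside $W \supseteq S$), both of which the paper silently assumes.
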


\begin{proof}
Since removing any single superfluous vertex $s$ from any of the bipartite graphs $B_i$ leaves a matching saturating $\cR$, $(\cF, \cR, W \setminus \lset{s})$ is a matchable FR-partition. Since removing $s$ from one does not change the other graphs $B_j$ at all, we can do this for each vertex class independently.
\end{proof}

We will need the following simple lemma about removing superfluous vertices later in Section~\ref{sec:searchingforcromulenttriples}.

\begin{lem} \label{lem:superfluousessential}
Let $B$ be a bipartite graph with vertex classes $\cR$ and $W$ that has a matching saturating $\cR$, and let $C \subseteq W$ be the maximal essential subset. If $p \in C$ and $s \in W \setminus C$, then $p$ is essential in $B$ if and only if it is essential in $B - s$.
\end{lem}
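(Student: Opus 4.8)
The plan is to unwind the definition of an essential vertex and then handle the two implications separately. Recall that for $p \in W$, the assertion ``$p$ is essential in $B$'' means exactly that some $R \in \cR$ has $p$ as its only neighbour, i.e.\ $N_B(R) = \lset{p}$. Since $p \in C$ and $s \in W \setminus C$ we have $p \neq s$, and deleting $s$ changes neighbourhoods only through $N_{B-s}(R) = N_B(R) \setminus \lset{s}$ for $R \in \cR$. The forward implication is then immediate: if $N_B(R) = \lset{p}$, then $N_{B-s}(R) = \lset{p}$ as well (because $s \neq p$), so $p$ is essential in $B - s$.

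For the converse, suppose $p$ is essential in $B - s$, witnessed by $R \in \cR$ with $N_{B-s}(R) = \lset{p}$; then $N_B(R)$ is either $\lset{p}$ or $\lset{p, s}$. In the first case we are done, so I would rule out the second. By \lemref{lem:essential} the maximal essential set $C$ is itself essential, so fix $\cU \subseteq \cR$ with $N_B(\cU) = C$ and $\abs{\cU} = \abs{C}$. Since $s \in N_B(R)$ but $s \notin C = N_B(\cU)$, we must have $R \notin \cU$. Then $\cU' := \cU \cup \lset{R}$ satisfies $N_B(\cU') = C \cup \lset{p, s} = C \cup \lset{s}$, where the last equality uses $p \in C$; hence $\abs{N_B(\cU')} = \abs{C} + 1 = \abs{\cU} + 1 = \abs{\cU'}$, so $\cU'$ is equineighboured and $C \cup \lset{s}$ is an essential subset of $W$ strictly larger than $C$ --- contradicting the maximality of $C$. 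Therefore $N_B(R) = \lset{p}$, and $p$ is essential in $B$.

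I do not expect any real obstacle here: the argument is a short inclusion/cardinality manipulation, and the only genuine idea is that the witness $R$ for $p$'s essentiality in $B - s$ may have picked up $s$ as a second neighbour in $B$, whereupon adjoining $R$ to the witness $\cU$ for $C$ yields a larger equineighboured set, which superfluity of $s$ forbids. The two points to keep straight are simply that $p \neq s$ (so deleting $s$ leaves any witness using only $p$ intact) and the elementary count $\abs{C \cup \lset{s}} = \abs{C} + 1 = \abs{\cU \cup \lset{R}}$, in which the hypothesis $p \in C$ is exactly what lets us absorb $p$ into $C$.
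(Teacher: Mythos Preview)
Your proof is correct and follows essentially the same approach as the paper's own argument: both take a witness $R$ for the essentiality of $p$ in $B-s$, observe that $N_B(R)\subseteq\{p,s\}$, and reach a contradiction to the maximality of $C$ by showing that if $R\notin\cU$ then $\cU\cup\{R\}$ witnesses that $C\cup\{s\}$ is essential. Your version is if anything a touch more explicit, splitting into the cases $N_B(R)=\{p\}$ and $N_B(R)=\{p,s\}$ and computing $N_B(\cU')=C\cup\{s\}$ as an equality rather than an inclusion.
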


\begin{proof}
If $p$ is essential in $B$, then it clearly is essential in $B - s$.

Conversely, assume $p$ is essential in $B - s$. Let $\cU \subseteq \cR$ be such that $N_B(\cU) = C$ and $\abs{\cU} = \abs{C}$, which exists by the definition of essential subsets. Since $p$ is essential, there is a unique $R \in \cR$ such that $N_{B - s}(R) = \lset{p}$. We claim that $R \in \cU$. Suppose not. Then $N_B(R) \subseteq \lset{s, p}$, and hence $N_B(\cU \cup \lset{R}) \subseteq C \cup \lset{s}$. Since $\abs{\cU \cup \lset{R}} = \abs{\cU} + 1 = \abs{C \cup \lset{s}}$, this would make $C \cup \lset{s}$ an essential set in $B$, a contradiction, since $C$ is maximal. Hence $R \in \cU$, from which follows that $s \notin N_B(R)$, and thus $N_B(R) = \lset{p}$, so $p$ is essential in $B$.
\end{proof}

\subsection{The Monster Lemma} \label{sec:monsterlemmas}

\begin{lem} \label{lem:monster}
Let $\cH$ be a $3$-partite $3$-graph that has a matchable FR-partition $(\cF, \cR, W)$. Let $a, b, c \in V(\cH)$ be in different vertex classes. Suppose that the following two conditions hold:
\begin{enumerate}
	\renewcommand{\theenumi}{(\arabic{enumi})}
	\renewcommand{\labelenumi}{\theenumi}
	\item \label{monster:fedge} For every $F \in \cF$, there is an $F$-edge avoiding $\lset{a, b, c}$,
	\item \label{monster:redge} For every $R \in \cR$, there is an $R$-edge avoiding $\lset{a, b, c}$.
\end{enumerate}
Then $\nu(\cH - \lset{a, b, c}) = \nu(\cH)$.
\end{lem}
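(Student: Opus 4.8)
The plan is to establish $\nu(\cH - \lset{a,b,c}) \geq \nu(\cH)$; the reverse inequality is automatic. Write $\lset{a,b,c} = \lset{v_1, v_2, v_3}$ with $v_i$ in the vertex class $V_i$. Two quick consequences of the hypotheses: first, by condition~\ref{monster:redge} every $R \in \cR$ satisfies $\abs{R \cap \lset{a,b,c}} \leq 1$, since otherwise two vertices of $R$ lie in $\lset{a,b,c}$ and every edge meeting $R$ in at least two vertices contains one of them; secondly, by condition~\ref{monster:fedge} each $F \in \cF$ has an $F$-edge $e_F$ avoiding $\lset{a,b,c}$, and as $e_F \subseteq F$ these edges are pairwise disjoint and disjoint from everything outside $\bigcup_{F \in \cF} F$. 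Since $\nu(\cH) = \abs{\cF} + \abs{\cR}$ by Definition~\ref{def:frpartition}\ref{fr:size}, it suffices to produce pairwise disjoint $\cR$-edges $g_R$ ($R \in \cR$), each avoiding $\lset{a,b,c}$, that are also disjoint from all the $e_F$ — allowing ourselves to re-choose the $e_F$ if necessary.

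To produce the $g_R$, I form an auxiliary bipartite graph $\cB$ on the parts $\cR$ and $W \setminus \lset{a,b,c}$, joining $R$ to $w \in (W \cap V_i) \setminus \lset{v_i}$ exactly when $\lset{w} \cup (R \setminus V_i) \in E(\cH)$ and $R$ has no vertex of $\lset{a,b,c}$ outside $V_i$ (so that this edge avoids $\lset{a,b,c}$). Using that each $B_i$ has a matching saturating $\cR$ (Definition~\ref{def:matchable}) together with Hall's theorem, and the fact that $\lset{a,b,c}$ meets each $V_i$ exactly once, one obtains $\abs{N_\cB(\cR')} \geq \abs{\cR'}$ for all $\cR' \subseteq \cR$ with $\abs{\cR'} \geq 2$, while for a single block $R$ one has $N_\cB(R) = \emptyset$ only if $N_{B_i}(R) = \lset{v_i}$ for all $i$; there is at most one such $R^*$, since then any saturating matching of $B_1$ must match $R^*$ to $a$. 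Hence $\cB$ has a matching $\cM$ saturating $\cR \setminus \lset{R^*}$, and setting $g_R := \lset{\cM(R)} \cup (R \setminus V_{i(R)})$, where $i(R)$ is the class of $\cM(R)$, yields pairwise disjoint $\cR$-edges avoiding $\lset{a,b,c}$ that lie inside $W \cup \bigcup_{R \in \cR} R$ and are therefore disjoint from every $e_F$. If no exceptional $R^*$ occurs, we are done.

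Suppose then $R^*$ is exceptional. Then $N_{B_i}(R^*) = \lset{v_i}$ forces each $v_i \in W$, so $\lset{a,b,c}$ is disjoint from every $F$ and from every other $R$. If $R^* \in E(\cH)$, take $g_{R^*} := R^*$, disjoint from everything. Otherwise, by condition~\ref{monster:redge} there is an $\cR$-edge $g^* = \lset{v^*} \cup (R^* \setminus V_{i^*})$ avoiding $\lset{a,b,c}$ with $v^* \notin R^* \cup \lset{a,b,c}$; moreover $v^* \notin W$, since $v^* \in W$ would place it in $N_{B_{i^*}}(R^*) = \lset{v_{i^*}}$. If $v^* \in F_0$ for some $F_0 \in \cF$, re-choose $e_{F_0}$ to be an $F_0$-edge missing $v^*$ — one exists because $\cH|_{F_0}$ is a truncated multi-Fano plane and every vertex of the truncated Fano plane is missed by some edge — which then also misses $\lset{a,b,c}$; now $g_{R^*} := g^*$ completes a disjoint family. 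If instead $v^*$ is the class-$i^*$ vertex of some $R' \in \cR$, the only possible conflict with $g^*$ is $g_{R'}$, and $g_{R'}$ avoids $v^*$ precisely when $R'$ is matched by $\cM$ into $W \cap V_{i^*}$. One can arrange this: re-run the Hall argument on $\cB$ with $R^*$ deleted and $R'$ restricted to its $B_{i^*}$-neighbours other than $v_{i^*}$ (a nonempty set, because in every saturating matching of $B_{i^*}$ the block $R^*$ occupies $v_{i^*}$, so $R'$ is matched elsewhere); a short case analysis confirms that Hall's condition survives, yielding the required $\cM$ and hence the disjoint family in this case too.

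Putting it together, $\lset{e_F : F \in \cF} \cup \lset{g_R : R \in \cR}$ is a matching of $\cH - \lset{a,b,c}$ of size $\abs{\cF} + \abs{\cR} = \nu(\cH)$, which proves the claim. I expect the main obstacle to be the exceptional block $R^*$, and within it the last sub-case where $g^*$ reaches into another $\cR$-block: there one cannot merely invoke the existence of saturating matchings of the $B_i$, but must use their structure to re-route $\cM$ so that $R'$ receives a vertex of $v^*$'s class.
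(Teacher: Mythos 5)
Your proof is correct and takes a genuinely different route from the paper. The paper's proof runs a four-way case analysis on the positions of $a,b,c$ (whether they lie in $V(\cR)$, whether they are essential for some $R$, whether all three are essential for the same $R$), and in each case starts from a fixed matching $M_1$ in $B_1$ of WRR-edges and repairs conflicts one at a time by swapping to RWR- or RRW-edges. You instead package all three auxiliary graphs $B_1, B_2, B_3$ into a single bipartite graph $\cB$ on $\cR$ versus $W\setminus\lset{a,b,c}$ and invoke Hall's theorem once. Your exceptional block $R^*$ (the $R$ with $N_{B_i}(R)=\lset{v_i}$ for all $i$) is precisely the paper's Case~4 in disguise, and the re-choosing of $e_{F_0}$ when $v^*\in V(\cF)$ matches the paper's observation in Case~4 that the $F$-edge may need to be reselected since the $R$-edge is no longer confined to $W\cup V(\cR)$. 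The Hall formulation is cleaner and more uniform, at the cost of a somewhat intricate verification of Hall's condition: the partition of $\cR'$ into $\cR'_0$ (blocks disjoint from $\lset{a,b,c}$) and $\cR'_i$ (the at most one block containing $v_i$) is what makes the count go through, since whenever some $\cR'_i\ne\emptyset$ one has $v_i\notin W$ and the deletion of $v_i$ is vacuous, giving $\abs{N_\cB(\cR')}\ge\abs{N_{B_i}(\cR'_0\cup\cR'_i)}$.

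The one place you wave your hands — ``a short case analysis confirms that Hall's condition survives'' after deleting $R^*$ and restricting $R'$ — is genuinely nontrivial and deserves to be written out. The key point rescuing it is that once $R^*$ exists, every $v_i\in W$, so every $R\ne R^*$ is disjoint from $\lset{a,b,c}$ and has $N_{B_i}(R)\ne\lset{v_i}$ in all three classes (else $\lset{R^*,R}$ violates Hall in $B_i$), whence $\abs{N_\cB(R)}\ge 3$ for every $R\ne R^*$, and more generally $\abs{N_\cB(\cR'')}\ge 3(\abs{\cR''}-1)\ge\abs{\cR''}+1$ for $\abs{\cR''}\ge 2$. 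Since the modified graph $\cB'$ satisfies $N_{\cB'}(\cR')\supseteq N_\cB(\cR'\setminus\lset{R'})$ whenever $R'\in\cR'$, this surplus of $1$ is exactly what compensates for the extra element $R'$; you should make this explicit rather than asserting it. With that filled in, the argument is complete.
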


\begin{proof}
Let $V_1$, $V_2$, and $V_3$ be the vertex classes of $\cH$, where $a \in V_1$, $b \in V_2$, and $c \in V_3$. We will select a matching $\cM \subseteq E(\cH)$ of size $\nu(\cH)$ avoiding $\lset{a, b, c}$.

First, for each $F \in \cF$ we choose an arbitrary edge from $\cH|_F$ avoiding $\lset{a, b, c}$ and include it in $\cM$. This can be done by condition~\ref{monster:fedge}. These edges are all pairwise disjoint, since the members of $\cF$ are pairwise disjoint. Furthermore, we will describe a procedure that selects pairwise disjoint $\cR$-edges, one for each $R \in \cR$, each containing a $W$-vertex and avoiding $\lset{a, b, c}$. Because they contain a $W$-vertex, these $\cR$-edges will all be disjoint from the $\cF$-edges we already put into $\cM$ (since both $W$ and $V(\cR)$ are disjoint from $V(\cF)$). If successful, we will have constructed the required matching $\cM$, since $\abs{\cM} = \abs{\cF} + \abs{\cR} = \nu(\cH)$.

How we choose the $\cR$-edges will fall into several cases. We introduce the following convenient notation for talking about $\cR$-edges. An $\cR$-edge $xyz$ of $\cH$ is called a \emph{WRR-edge} if $x \in W \cap V_1$. Analogously, $xyz$ is called an \emph{RWR-edge} or an \emph{RRW-edge} if $y \in W \cap V_2$ or $z \in W \cap V_3$, respectively.

\noindent \textbf{Case 1}. At least one of the vertices $a$, $b$, or $c$ is in $V(\cR)$.

We may assume without loss of generality that $a \in V(\cR)$. First we choose a matching $M_1$ saturating $\cR$ in the auxiliary bipartite graph $B_1$. Such a matching exists by the matchability of the FR-partition. Each edge $Rw \in M_1$, with $R \in \cR$ and $w \in W \cap V_1$ corresponds to a WRR-edge of $\cH$ consisting of $w$ and two vertices of $R$. These edges form a matching $\cM'$ of $\cR$-edges in $\cH$. Each edge in $\cM'$ contains a $W$-vertex in $V_1$ and hence avoids $a \in V(\cR) \cap V_1$. The only problem might be that $b$ or $c$ appear in some of these edges, rendering those edges unsuitable. If $b$ is contained in the $R$-edge $e_1 \in \cM'$ for some $R \in \cR$, then replace $e_1$ in $\cM'$ with an arbitrary RWR-edge $e_2$ for $R$. Such an edge exists because $B_2$ has a matching saturating $\cR$, and it is disjoint from all other edges in $\cM'$ because these are WRR-edges. The vertex of $e_2$ in $V_1$ cannot be $a$, since then all $R$-edges would intersect $\lset{a, b}$, contradicting condition~\ref{monster:redge}. Similarly, the vertex of $e_2$ in $V_3$ cannot be $c$, since then all $R$-edges would intersect $\lset{b, c}$. Finally, if $c$ is contained in the $R'$-edge $e_3 \in \cM'$ for some $R' \in \cR$, then replace $e_3$ in $\cM'$ with an arbitrary RRW-edge $e_4$ for $R'$. Such an edge exists because $B_3$ has a matching saturating $\cR$, and it is disjoint from all other edges of $\cM'$ because they are all WRR- and RWR-edges. The edge $e_4$ cannot contain $a$, otherwise all $R'$-edges would intersect $\lset{a, c}$, contradicting~\ref{monster:redge}. The edge $e_4$ also does not contain $b$, since otherwise every $R'$-edge would intersect $\lset{b, c}$, again contradicting~\ref{monster:redge}.

Now the vertices of the matching $\cM'$ avoid $\lset{a, b, c}$ and Case 1 is complete.

Let us assume from now on that none of the vertices $a$, $b$, and $c$ are in $V(\cR)$.

\noindent \textbf{Case 2}. None of the vertices $a$, $b$, and $c$ are essential.

First we choose a matching $M_1$ in $B_1$ saturating $\cR$, which exists by the matchability of the FR-partition. This corresponds to a matching $\cM'$ in $\cH$ consisting of WRR-edges. Clearly, $b$ and $c$ are avoided by the edges of $\cM'$ because $b, c \notin V(\cR)$. If $a$ is contained in an $R$-edge $e_1 \in \cM'$ for some $R \in \cR$, then replace $e_1$ in $\cM'$ by an arbitrary RWR-edge $e_2$ for $R$ that avoids $b$. This can be done, since $b$ is not essential. The edge $e_2$ also avoids $a$ and $c$ because $a, c \notin V(\cR)$, and it is disjoint from all other edges of $\cM'$ because they are all WRR-edges.

Hence we have the required matching $\cM'$ avoiding $\lset{a, b, c}$ and Case 2 is complete.

\noindent \textbf{Case 3}. Not all of the vertices $a$, $b$, and $c$ are essential $W$-vertices for the same $R \in \cR$.

We may assume without loss of generality that $a$ is essential for $R \in \cR$ (If no vertex is essential, we are in Case 2). By assumption, not both $b$ and $c$ are essential for $R$ as well, so assume without loss of generality that $b$ is not essential for $R$. We choose a matching $M_1 \subseteq E(B_1)$ saturating $\cR$. This corresponds to a matching $\cM'$ in $\cH$ consisting of WRR-edges. Clearly, $b$ and $c$ are avoided by the edges of $\cM'$ because $b, c \notin V(\cR)$. Since $a$ is essential for $R$, it must be that $Ra \in M_1$ because $a$ is the only neighbor of $R$ in $W \cap V_1$. Let $e_1 \in \cM'$ be the edge corresponding to $Ra \in M_1$. We replace $e_1$ in $\cM'$ by an arbitrary RWR-edge $e_2$ for $R$ that avoids $b$. This can be done, since $b$ is not essential for $R$. The edge $e_2$ also avoids $a$ and $c$ because $a, c \notin V(\cR)$, and it is disjoint from all other edges of $\cM'$ because they are all WRR-edges.

This means that $\cM'$ avoids $\lset{a, b, c}$, and so Case 3 is complete.

\noindent \textbf{Case 4}. The vertices $a$, $b$, and $c$ are all essential $W$-vertices for $R \in \cR$.

By condition~\ref{monster:redge}, there must be an $R$-edge $e$ avoiding $a$, $b$, and $c$. At least two of its vertices must be in $R$, so assume without loss of generality that $e \cap V_2, e \cap V_3 \subseteq R$. We choose a matching $M_1$ in $B_1$ saturating $\cR$. It corresponds to a matching $\cM'$ of WRR-edges in $\cH$. Because $a$ is essential for $R$, it follows that there is an edge of $\cM'$ containing $a$ and two vertices of $R$. Replace it by $e$, which avoids $a$, $b$, and $c$ and is disjoint from the other edges of $\cM'$ because its $V_1$-vertex is not in $W$ (because $a$ is the only $W$-vertex in a WRR-edge of $R$) and its other vertices are in $R$. The rest of the edges of $\cM'$ clearly avoid $a$, $b$, and $c$, since the one edge of $\cM'$ containing $a$ has already been replaced, and $b, c \notin V(\cR)$.

We must be careful because in this case, one of the edges of $\cM'$, namely $e$, is not necessarily contained in $V(\cR) \cup W$, as has been true in all other cases. Thus, the $V_1$-vertex of $e$ may be in some $F \in \cF$, and hence could potentially intersect the $F$-edge which we added to $\cM$ in the beginning. However, since $\cH|_F$ is a truncated multi-Fano plane, it cannot be covered by one vertex, so there is an $F$-edge disjoint from $e$ with which we can replace our original choice of edge for $\cM$. Note that we do not need to worry about avoiding $\lset{a, b, c}$ with this edge, as these are all in $W$.

Adding the edges in $\cM'$ to $\cM$ gives us our desired matching avoiding $\lset{a, b, c}$. This concludes Case 4.

These cases exhaust all possibilities, so the proof is complete.
\end{proof}

In order to facilitate the use of this lemma, we prove in some specific cases that the conditions are fulfilled.

\begin{cor} \label{cor:monstercor}
Let $\cH$ be a $3$-partite $3$-graph with a matchable FR-partition $(\cF, \cR, W)$. Let $a, b, c \in V(\cH)$ be in different vertex classes, and let $S \subseteq W$ be a set of superfluous vertices with at most one vertex in each vertex class. Then in any of the following cases we have $\nu(\cH - (\lset{a, b, c} \cup S)) = \nu(\cH)$:
\begin{enumerate}
	\renewcommand{\theenumi}{(\arabic{enumi})}
	\renewcommand{\labelenumi}{\theenumi}
	\item \label{monstercor:fwx} $a \in V(\cF)$, $b \in W$, and $c$ is arbitrary,
	\item \label{monstercor:rnrnr} $a \in R \in \cR$, $b \notin R$, and $c \notin V(\cR)$,
	\item \label{monstercor:enenr} $a \in W$ is essential for $R \in \cR$, $b$ is not essential for $R$ in $\cH - S$, and $c \notin V(\cR)$,
	\item \label{monstercor:nenrx} $a \in W$ is not essential in $\cH - S$, $b \notin V(\cR)$, and $c$ is arbitrary.
\end{enumerate}
\end{cor}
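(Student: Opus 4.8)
The plan is to reduce the statement to the Monster Lemma (\lemref{lem:monster}). Since $S$ is a set of superfluous vertices with at most one in each vertex class, Observation~\ref{obs:superfluous} tells us that $(\cF,\cR,W\setminus S)$ is a matchable FR-partition of $\cH':=\cH-S$, and comparing condition~\ref{fr:size} of Definition~\ref{def:frpartition} for $\cH$ and for $\cH'$ gives $\nu(\cH')=\abs{\cF\cup\cR}=\nu(\cH)$. We may assume $a,b,c\notin S$, since any of the three that happens to be superfluous can simply be absorbed into $S$. So it suffices to verify, in each of the four cases, that hypotheses~\ref{monster:fedge} and~\ref{monster:redge} of \lemref{lem:monster} hold for $\cH'$ with its FR-partition and the vertices $a,b,c$; then $\nu(\cH'-\{a,b,c\})=\nu(\cH')=\nu(\cH)$, which is the assertion. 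Throughout I write $V_{i_a},V_{i_b},V_{i_c}$ for the (distinct) vertex classes of $\cH$ containing $a,b,c$.

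Hypothesis~\ref{monster:fedge} is handled uniformly. In each of the four cases at least one of $a,b,c$ lies outside $V(\cF)$: $b\in W$ in~\ref{monstercor:fwx}, $a\in V(\cR)$ in~\ref{monstercor:rnrnr}, and $a\in W$ in~\ref{monstercor:enenr} and~\ref{monstercor:nenrx}. Hence for every $F\in\cF$ the set $\{a,b,c\}\cap V(F)$ consists of at most two vertices, lying in distinct vertex classes of the truncated multi-Fano plane $\cH'|_F$. Since the only minimum vertex covers of a truncated multi-Fano plane are its three vertex classes --- each of which is a pair of vertices from a single vertex class --- such a set is not a vertex cover of $\cH'|_F$, so some $F$-edge avoids $\{a,b,c\}$.

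The real content is hypothesis~\ref{monster:redge}: for each $R\in\cR$ we must find an $R$-edge of $\cH'$ avoiding $\{a,b,c\}$. The crucial preliminary observation is that in all four cases $\abs{R\cap\{a,b,c\}}\le 1$ for every $R$ --- in~\ref{monstercor:fwx}, \ref{monstercor:enenr} and~\ref{monstercor:nenrx} two of $a,b,c$ lie outside $V(\cR)$, and in~\ref{monstercor:rnrnr} the vertex $a$ lies in a single member of $\cR$ that by hypothesis contains neither $b$ nor $c$. This is precisely the constraint needed, since an edge meeting $R$ in two vertices cannot avoid two vertices of $R$. Now fix $R\in\cR$. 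If $R\cap\{a,b,c\}=\{v\}$ with $v\in V_j$, use matchability of the FR-partition of $\cH'$ to pick an $R$-edge whose $W$-vertex $w$ lies in $V_j$; it uses the pair $R\setminus\{v\}$, hence avoids $v$ (as $w\in W$ while $v\in V(\cR)$) and avoids the other two elements of $\{a,b,c\}$, which lie in classes other than $V_j$ and are not in $R$. If instead $R\cap\{a,b,c\}=\emptyset$, it suffices to name a vertex class $V_j$ whose element of $\{a,b,c\}$ is either not a $W$-vertex or not essential for $R$ in $\cH'$: then $R$ has a neighbour $w$ different from that element in the auxiliary graph $B_j$ of $\cH'$, and the $R$-edge through $w$ again avoids $\{a,b,c\}$.

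What remains is to supply, in that last sub-case, the required class $V_j$, and this is where the case hypotheses enter. In~\ref{monstercor:fwx} the element $a\in V(\cF)$ is not a $W$-vertex, so $j=i_a$ works, and likewise in~\ref{monstercor:rnrnr} where $a\in V(\cR)$. In~\ref{monstercor:nenrx}, $a$ is not essential in $\cH'$ and hence not essential for $R$, so again $j=i_a$. Case~\ref{monstercor:enenr} is the delicate one: let $R_0$ be the member for which $a$ is essential (this persists in $\cH'$, since $a\notin S$ and the deleted vertices are superfluous, so $N_{B_{i_a}}(R_0)$ is unchanged). If $R\neq R_0$, then $a$ cannot be essential for $R$, for otherwise $R$ and $R_0$ would both have neighbourhood exactly $\{a\}$ in $B_{i_a}$, contradicting the existence of a matching in $B_{i_a}$ saturating $\cR$; so $j=i_a$ works. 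If $R=R_0$, take $j=i_b$: the hypothesis that $b$ is not essential for $R_0$ in $\cH'$ (which is vacuous when $b\notin W\setminus S$) provides a neighbour of $R_0$ in $B_{i_b}$ distinct from $b$. With hypotheses~\ref{monster:fedge} and~\ref{monster:redge} confirmed in all cases, \lemref{lem:monster} applies and the corollary follows. I expect the single genuinely delicate point to be exactly this calibration of the four hypotheses: one has to see that they force $\abs{R\cap\{a,b,c\}}\le 1$ for every $R$ (otherwise no $R$-edge avoids $\{a,b,c\}$), and that whenever $R$ misses $\{a,b,c\}$ some vertex class still admits an $R$-edge avoiding $\{a,b,c\}$ --- with case~\ref{monstercor:enenr} leaning on the fact that distinct members of $\cR$ must be matched to distinct vertices in the graphs $B_i$.
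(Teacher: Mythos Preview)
Your proof is correct and follows essentially the same route as the paper: pass to $\cH'=\cH-(S\setminus\{a,b,c\})$ via Observation~\ref{obs:superfluous} and verify the two hypotheses of Lemma~\ref{lem:monster} case by case. Your organization is in fact slightly more streamlined than the paper's --- you treat hypothesis~\ref{monster:fedge} uniformly and split the verification of~\ref{monster:redge} first on $\abs{R\cap\{a,b,c\}}$ before invoking the case hypotheses --- but the underlying argument is the same; the one cosmetic point is that ``absorbed into $S$'' should read ``removed from $S$'' (equivalently, replace $S$ by $S\setminus\{a,b,c\}$), which is precisely the paper's $S'$.
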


\begin{proof}
Let $V_1$, $V_2$, and $V_3$ be the vertex classes of $\cH$, where $a \in V_1$, $b \in V_2$, and $c \in V_3$. Let $S' = S \setminus \lset{a, b, c}$. By Observation~\ref{obs:superfluous}, the hypergraph $\cH' = \cH - S'$ has the matchable FR-partition $(\cF, \cR, W \setminus S')$, and hence $\nu(\cH') = \nu(\cH)$. We will apply Lemma~\ref{lem:monster} to $\cH'$ to find a matching in $\cH'$ of size $\nu(\cH')$ avoiding $\lset{a, b, c}$. This constitutes a matching in $\cH - (\lset{a, b, c} \cup S)$ of size $\nu(\cH)$, as desired. We must simply check that the two conditions of Lemma~\ref{lem:monster} hold.

\noindent \textbf{Case 1}. $a \in V(\cF)$, $b \in W$, and $c$ is arbitrary.

For any $F \in \cF$, there is an $F$-edge avoiding $\lset{a, b, c}$, because $b \in W$, and $a$ and $c$, being in different vertex classes, do not cover every edge of $\cH'|_F$ (a truncated multi-Fano plane).

Let $R = \lset{r_1, r_2, r_3} \in \cR$ (where $r_i \in V_i$). We will find an $R$-edge avoiding $\lset{a, b, c}$. If $c \in R$, then there is an $R$-edge avoiding $\lset{a, b, c}$ because the matchability of $B_3$ ensures that there is an $R$-edge $r_1 r_2 w$ with $w \in W \cap V_3$, which clearly avoids $\lset{a, b, c}$, because $a, b \notin V(\cR)$, and $c \in R$. Suppose $c \notin R$. By the matchability of $B_1$, there is an $R$-edge $w'r_2 r_3$, where $w' \in W \cap V_1$, and this edge avoids $\lset{a, b, c}$ because $a \in V(\cF)$, $b \in W$, and $c \notin R$.

Therefore Lemma~\ref{lem:monster} applies, and we have $\nu(\cH' - \lset{a, b, c}) = \nu(\cH)$.

\noindent \textbf{Case 2}. $a \in R \in \cR$, $b \notin R$, and $c \notin V(\cR)$.

For any $F \in \cF$, there is an $F$-edge avoiding $\lset{a, b, c}$, because $a \in V(\cR)$, and $b$ and $c$ do not cover every edge of $\cH'|_F$ (a truncated multi-Fano plane).

Let $R' = \lset{r_1, r_2, r_3} \in \cR$ (where $r_i \in V_i$). We will find an $R'$-edge avoiding $\lset{a, b, c}$. If $b \in R'$, then $R' \neq R$, so $a \notin R'$. There is an $R'$-edge $r_1 wr_3$ with $w \in W \cap V_2$ by matchability applied to $B_2$. This edge avoids $\lset{a, b, c}$ because $a \notin R'$, $b \in R'$, and $c \notin V(\cR)$. Suppose $b \notin R'$. By the matchability of $B_1$, there is an $R'$-edge $w'r_2 r_3$, where $w' \in W \cap V_1$, and this edge avoids $\lset{a, b, c}$ because $a \in V(\cR)$, $b \notin R'$, and $c \notin V(\cR)$.

Therefore Lemma~\ref{lem:monster} applies, and we have $\nu(\cH' - \lset{a, b, c}) = \nu(\cH)$.

\noindent \textbf{Case 3}. $a \in W$ is essential for $R \in \cR$, $b$ is not essential for $R$ in $\cH - S$, and $c \notin V(\cR)$.

Note that if $a$ is essential for $R$ in $\cH$, then it is still essential for $R$ in $\cH'$, a subgraph of $\cH$. Similarly, if $b$ is not essential for $R$ in $\cH - S$, then it certainly is not essential for $R$ in $\cH'$, since $\cH - S$ is a subhypergraph of $\cH'$.

For any $F \in \cF$, there is an $F$-edge avoiding $\lset{a, b, c}$, because $a \in W$, and $b$ and $c$ do not cover every edge of $\cH'|_F$ (a truncated multi-Fano plane).

Let $R' = \lset{r_1, r_2, r_3} \in \cR$ (where $r_i \in V_i$). We will find an $R'$-edge avoiding $\lset{a, b, c}$. If $b$ is not essential for $R'$, then $R'$ has a neighbor $w \in W \cap V_1$ in $B_2$ with $w \neq b$. The $R'$ edge $r_1 wr_3$ then avoids $\lset{a, b, c}$ because $a \in W$, $b \neq w$, and $c \notin V(\cR)$. If $b$ is essential for $R'$, then $b \in W$ and $R' \neq R$, so $a$ is not essential for $R'$ (because no vertex can be essential for two different members of $\cR$ by matchability). Thus $R'$ has a neighbor $w' \in W \cap V_1$ in $B_1$ with $w' \neq a$. The $R'$-edge $w'r_2 r_3$ then avoids $\lset{a, b, c}$ because $w' \neq a$ and $b, c \notin V(\cR)$.

Therefore Lemma~\ref{lem:monster} applies, and we have $\nu(\cH' - \lset{a, b, c}) = \nu(\cH)$.

\noindent \textbf{Case 4}. $a \in W$ is not essential in $\cH - S$, $b \notin V(\cR)$, and $c$ is arbitrary.

Note that if $a$ is not essential in $\cH - S$, then it certainly is not essential in $\cH'$, since $\cH - S$ is a subhypergraph of $\cH'$.

For any $F \in \cF$, there is an $F$-edge avoiding $\lset{a, b, c}$, because $a \in W$, and $b$ and $c$ do not cover every edge of $\cH'|_F$ (a truncated multi-Fano plane).

Let $R = \lset{r_1, r_2, r_3} \in \cR$ (where $r_i \in V_i$). We will find an $R$-edge avoiding $\lset{a, b, c}$. If $c \in R$, then there is an $R$-edge avoiding $\lset{a, b, c}$ because the matchability of $B_3$ ensures that there is an $R$-edge $r_1 r_2 w$ with $w \in W \cap V_3$, which clearly avoids $\lset{a, b, c}$, since $a, b \notin V(\cR)$, and $c \in R$. Suppose $c \notin R$. Since $a$ is not essential, $R$ has a neighbor $w' \in W \cap V_1$ in $B_1$ with $w' \neq a$. The $R$-edge $w'r_2 r_3$ then avoids $\lset{a, b, c}$ because $w' \neq a$, $b \notin V(\cR)$, and $c \notin R$.

Therefore Lemma~\ref{lem:monster} applies, and we have $\nu(\cH' - \lset{a, b, c}) = \nu(\cH)$.
\end{proof}

It is unfortunately necessary in Cases 3 and 4 to make sure that the non-essential $W$-vertex remains non-essential after removing the superfluous vertices. However, this condition is often very easy to check, since removing superfluous vertices from the hypergraph only affects the status of those $W$-vertices in their vertex class. This leads to the following observation:

\begin{obs}
Let $\cH$ be a $3$-partite $3$-graph with a matchable FR-partition $(\cF, \cR, W)$, and let $s \in W$ be a superfluous vertex. Then if $w \in W$ is in a different vertex class from $s$, it holds that $w$ is non-essential in $\cH$ if and only if it is non-essential in $\cH - s$.
\end{obs}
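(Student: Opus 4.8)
The plan is to show that deleting a superfluous vertex $s$ from its vertex class, say $V_j$, leaves the auxiliary bipartite graph of every \emph{other} vertex class unchanged. Since essentiality of a $W$-vertex lying in $V_i$ is, by definition, a property of the singleton $\lset{w}$ inside the graph $B_i$, the equivalence then drops out immediately.

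Concretely, write $s \in W \cap V_j$ and $w \in W \cap V_i$ with $i \ne j$, and let $\lset{i, j, k} = \lset{1, 2, 3}$. First I would invoke Observation~\ref{obs:superfluous} with $S = \lset{s}$: it says that $(\cF, \cR, W \setminus \lset{s})$ is a matchable FR-partition of $\cH - s$, so the auxiliary graph $B_i'$ of $\cH - s$ for this partition makes sense, and $w$ lies in its vertex class $(W \setminus \lset{s}) \cap V_i = W \cap V_i$ (using $s \notin V_i$).

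The only real step is to check that $B_i$ and $B_i'$ are the same graph. They have the same vertex classes $\cR$ and $W \cap V_i$, and in either graph $R \in \cR$ is joined to $w' \in W \cap V_i$ exactly when the ambient hypergraph has an edge consisting of $w'$ together with two vertices of $R$. Any such edge has its $V_i$-component equal to $w'$, its $V_j$-component in $R \subseteq V(\cR)$, and its $V_k$-component in $V_k$; since $s \in W \cap V_j$ while $W$ is disjoint from $V(\cR)$ by the partition condition~\ref{fr:partition}, none of these three vertices equals $s$. Hence every edge of $\cH$ witnessing an adjacency of $B_i$ survives the deletion of $s$ and still witnesses it in $B_i'$, and conversely $E(\cH - s) \subseteq E(\cH)$. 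So $B_i = B_i'$.

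Finally, unwinding definitions, $w$ is essential in $\cH$ iff $\lset{w}$ is essential in $B_i$, while $w$ is essential in $\cH - s$ iff $\lset{w}$ is essential in $B_i' = B_i$; these coincide, and negating gives the claim. I do not expect a genuine obstacle: the lone point needing care is that the edges feeding $B_i$ avoid $s$, which comes down to $s \notin V(\cR)$ and $s \notin V_i$ — both immediate because $s$ is a $W$-vertex of a vertex class other than $V_i$. Superfluousness of $s$ itself is used only to license Observation~\ref{obs:superfluous}, i.e.\ to guarantee that $\cH - s$ still carries a matchable FR-partition against which ``essential'' can be defined.
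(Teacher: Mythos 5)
Your proof is correct and fills in exactly the one-line justification the paper gives before stating the observation (``removing superfluous vertices from the hypergraph only affects the status of those $W$-vertices in their vertex class''): since $s \in W \cap V_j$ with $j \neq i$, and every $R$-edge defining an adjacency in $B_i$ has its $V_j$-vertex in $V(\cR)$ (disjoint from $W$), the auxiliary graph $B_i$ is literally unchanged by deleting $s$, so essentiality of $w \in W \cap V_i$ is preserved. The invocation of Observation~\ref{obs:superfluous} to ensure $(\cF, \cR, W \setminus \lset{s})$ is still a matchable FR-partition, so that essentiality in $\cH - s$ is even well-defined, is the right way to account for where superfluousness of $s$ enters.
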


\subsection{Matchability and the Edge-Home Property}

One nice consequence of the monster lemma is the following proposition, which will be key to our proof.

\begin{defn}
An FR-partition $(\cF, \cR, W)$ is \emph{proper} if there is no $R \in \cR$ and an edge of $\cH$ consisting of three vertices of $W$ which together induce a truncated Fano plane. Being proper just means that we have not called anything an $R$ if it could have been part of an $F$.
\end{defn}

Clearly home-base partitions are proper, because they do not contain any edges consisting of $W$-vertices. It turns out that a converse to this fact is also true.

\begin{prop} \label{prop:matchableedgehome}
A proper matchable FR-partition of a $3$-partite $3$-graph has the edge-home property.
\end{prop}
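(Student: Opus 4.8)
The plan is to argue by contradiction using the monster lemma, \lemref{lem:monster}. Suppose $(\cF,\cR,W)$ is a proper, matchable FR-partition of $\cH$ that fails the edge-home property, and call an edge of $\cH$ \emph{bad} if it is neither an $\cF$-edge nor an $\cR$-edge; fix a bad edge $e=xyz$ with $x\in V_1$, $y\in V_2$, $z\in V_3$. I would verify the two hypotheses of \lemref{lem:monster} for the triple $\lset{x,y,z}$; then $\nu(\cH-\lset{x,y,z})=\nu(\cH)$, and adjoining $e$ (which avoids $\lset{x,y,z}$) to a maximum matching of $\cH-\lset{x,y,z}$ produces a matching of size $\nu(\cH)+1$, a contradiction.

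Hypothesis~\ref{monster:fedge} is easy, and I would prove it in the general form: \emph{if a triple $T$ of vertices, one per class, does not form an $\cF$-edge, then every $F\in\cF$ has an $F$-edge disjoint from $T$.} Indeed $T\not\subseteq F$, so $T\cap F$ has at most two vertices and at most one in each class; since the only vertex covers of size at most two of a truncated multi-Fano plane are its three vertex classes, each of which is a pair of vertices lying in a common class of $\cH$, the set $T\cap F$ is not a cover of $\cH|_F$, so some $F$-edge misses it. Next comes the key preliminary claim: \emph{no bad edge contains a vertex of $V(\cR)$.} To see this, let $e'$ be a bad edge with, say, its $V_1$-vertex $r'$ lying in some $R'\in\cR$. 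For any $R=\lset{r_1,r_2,r_3}\in\cR$ with $r_i\in V_i$, I produce an $R$-edge avoiding $e'$: if $e'$ meets $R$ it does so in a single vertex $r_i$ (as $e'$ is not an $\cR$-edge), and matchability of $B_i$ gives an $R$-edge $r_jr_kw$ with $w\in W\cap V_i$, which avoids $e'$ since $w\notin V(\cR)$ and $r_j,r_k\notin e'$; if $e'$ misses $R$, matchability of $B_1$ gives an $R$-edge $wr_2r_3$ with $w\in W\cap V_1$, which avoids $e'$ since $w\ne r'$ (one lies in $W$, the other in $V(\cR)$) and $r_2,r_3\notin e'$. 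So hypothesis~\ref{monster:redge} holds for $e'$, and combined with the general form of hypothesis~\ref{monster:fedge} above, \lemref{lem:monster} yields the usual contradiction. Hence every bad edge, in particular $e$, lies in $V(\cF)\cup W$.

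It remains to verify hypothesis~\ref{monster:redge} for $e=xyz$, and this is where properness enters and the argument is most delicate. Suppose for contradiction that for some $R=\lset{r_1,r_2,r_3}\in\cR$ (with $r_i\in V_i$) every $R$-edge meets $\lset{x,y,z}$. First $\abs{e\cap R}=0$: otherwise $\abs{e\cap R}=1$ and the matchability argument of the previous paragraph produces an $R$-edge avoiding $e$; consequently $r_1r_2r_3\notin E(\cH)$, since it would be an $R$-edge avoiding $e$. Matchability of $B_1,B_2,B_3$ provides $R$-edges of the forms $w_1r_2r_3$, $r_1w_2r_3$, $r_1r_2w_3$ with $w_i\in W\cap V_i$; each must meet $\lset{x,y,z}$ while $r_1,r_2,r_3\notin\lset{x,y,z}$, so $w_1=x$, $w_2=y$, $w_3=z$. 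Thus $x,y,z\in W$ and $xr_2r_3$, $r_1yr_3$, $r_1r_2z$ are edges of $\cH$. Finally, the three triples $r_1yz$, $xr_2z$, $xyr_3$ contained in $R\cup\lset{x,y,z}$ would each be bad edges meeting $V(\cR)$, so by the claim none of them is an edge of $\cH$. Hence the only edges of $\cH$ contained in the six vertices $R\cup\lset{x,y,z}$ are $xr_2r_3$, $r_1yr_3$, $r_1r_2z$, and $xyz$; taking $\lset{r_1,x}$, $\lset{r_2,y}$, $\lset{r_3,z}$ as the three classes, these four triples form a copy of the truncated Fano plane (obtained from the standard one by interchanging the two vertices of a single class), so $\cH|_{R\cup\lset{x,y,z}}$ is a truncated multi-Fano plane. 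But then $R\in\cR$ and the edge $e=xyz$, all of whose vertices lie in $W$, together witness that $(\cF,\cR,W)$ is not proper — the desired contradiction. The main obstacle is precisely this last step: organizing the case analysis so that exactly one residual configuration survives, and then recognizing it as the Fano configuration forbidden by properness — and the claim that no bad edge meets $V(\cR)$ is what rules out the stray ``even'' triples that would otherwise spoil that recognition.
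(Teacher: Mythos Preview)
Your proof is correct and follows essentially the same route as the paper: both argue by contradiction via \lemref{lem:monster}, dispose of hypothesis~\ref{monster:fedge} by the two-vertex cover observation for truncated Fano planes, and reduce hypothesis~\ref{monster:redge} to the single residual configuration in which $R\cup\lset{x,y,z}$ carries exactly the four Fano edges, contradicting properness. The only organisational difference is in how the four ``extra'' triples on $R\cup\lset{x,y,z}$ are excluded: you first isolate the reusable claim that no bad edge meets $V(\cR)$ (itself proved by one invocation of \lemref{lem:monster}) and then apply it to rule out $r_1yz$, $xr_2z$, $xyr_3$, whereas the paper instead observes that any such extra edge would yield two disjoint edges on the six vertices and directly assembles a matching of size $\nu(\cH)+1$ from these together with one $F$-edge per $F$ and one WRR-edge per $R'\neq R$.
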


\begin{proof}
Let $\cH$ be a $3$-partite $3$-graph with vertex classes $V_1$, $V_2$, $V_3$, and let $(\cF, \cR, W)$ be a proper matchable FR-partition of $\cH$. Let $abc$ be an edge of $\cH$. We aim to show that it is either an $\cF$-edge or an $\cR$-edge. Suppose it is not. We will aim for a contradiction by applying Lemma~\ref{lem:monster} to show $\cH - \lset{a, b, c}$ has a matching of size $\nu(\cH)$.

By assumption, $abc$ is not in $\cH|_F$ for any $F \in \cF$, which means that every $F \in \cF$ has an $F$-edge avoiding $\lset{a, b, c}$, since the only way to cover a truncated Fano plane with vertices from different vertex classes is if they form one of its edges. We want to show that it also cannot cover every $R$-edge for any $R \in \cR$.

Since the partition is matchable, each of the auxiliary bipartite graphs $B_1$, $B_2$, and $B_3$ have matchings saturating $\cR$, say $M_1$, $M_2$, and $M_3$, respectively. Then each $R = \lset{r_1, r_2, r_3} \in \cR$ has three $W$-vertices, $w^R_i \in V_i$ assigned to it, so that $Rw^R_i \in M_i$, which means that $w^R_i r_j r_k$ are edges for each choice of $\lset{i, j, k} = \lset{1, 2, 3}$. By assumption, $abc$ intersects $R$ in at most one vertex (otherwise, it is an $R$-edge). If $abc$ intersects $R$ in one vertex, without loss of generality in $V_1$, then $w^R_1 r_2 r_3$ is an $R$-edge disjoint from $abc$. If $abc$ does not intersect $R$ in any vertex, then it intersects all the $R$-edges $w^R_i r_j r_k$ for $\lset{i, j, k} = \lset{1, 2, 3}$ only if $abc = w^R_1 w^R_2 w^R_3$, which would mean that $abc$, $w^R_1 r_2 r_3$, $r_1 w^R_2 r_3$, and $r_1 r_2 w^R_3$ form a truncated Fano plane. If this is the case, then we claim that these are in fact the only edges on $\lset{a, b, c, r_1, r_2, r_3}$, which would contradict the assumption that $(\cF, \cR, W)$ is proper.

Suppose these are not the only edges on $\lset{a, b, c, r_1, r_2, r_3}$. Then there are two disjoint edges on $\lset{a, b, c, r_1, r_2, r_3}$. Now pick one $F$-edge for each $F \in \cF$, and take the edges $w^{R'}_1 r'_2 r'_3$ for each $R' \in \cR \setminus \lset{R}$. These edges form a matching of size $\abs{\cF} + \abs{\cR} - 1$, and they do not intersect $\lset{a, b, c, r_1, r_2, r_3}$. Together with the two disjoint edges on $\lset{a, b, c, r_1, r_2, r_3}$, we find a matching of size $\abs{\cF} + \abs{\cR} + 1 = \nu(\cH) + 1$, a contradiction.

Hence $a$, $b$, and $c$ fulfill the conditions of Lemma~\ref{lem:monster}, and $\cH \setminus \lset{a, b, c}$ would have a matching of size $\nu(\cH)$, which together with $abc$ would be a matching of size $\nu(\cH) + 1$ in $\cH$, a contradiction. Therefore $\cH$ has the edge-home property.
\end{proof}

\section{Cromulent Triples}  \label{sec:cromulenttriples}

The aim of this section is to define the appropriate substructure which will facilitate the inductive proof of our main theorem (Theorem~\ref{thm:characterization}). The key definition is that of a cromulent triple.

\begin{defn}
Let $\cH$ be a $3$-partite $3$-graph with vertex classes $V_1$, $V_2$, and $V_3$. A triple of nonempty sets $(Y_1, Y_2, X)$ with $Y_1 \subseteq V_i$, $Y_2 \subseteq V_j$ and $X \subseteq V_k$, where $\lset{i, j, k} = \lset{1, 2, 3}$ is called a \emph{cromulent} triple if it fulfills the following conditions:
\begin{enumerate}
	\renewcommand{\theenumi}{(\arabic{enumi})}
	\renewcommand{\labelenumi}{\theenumi}
	\item \label{cromulent:size} $\abs{Y_1} = \abs{Y_2} \leq \abs{X}$,
	\item \label{cromulent:neighborhood} $N_{\link{\cH}{V_i}}(X) = Y_2$,
	\item \label{cromulent:matching} There is a hypergraph matching in $\cH|_{Y_1 \cup Y_2 \cup X}$ of size $\abs{Y_1}$,
	\item \label{cromulent:homebase} The hypergraph $\cH_0 = \cH - (Y_1 \cup Y_2 \cup X)$ is a home-base hypergraph with $\nu(\cH_0) = \nu(\cH) - \abs{Y_1}$,
	\item \label{cromulent:edges} Given any home-base partition $(\cF, \cR, W)$ of $\cH_0$, we have $N_{\link{\cH}{V_j}}(X) \subseteq Y_1 \cup V(\cR) \cup V(\cF)$.
\end{enumerate}
Such a triple is called \emph{perfectly cromulent} if it fulfills the following stronger version of condition~\ref{cromulent:edges}:
\begin{enumerate}
	\item[(5*)] \label{cromulent:perfect} $N_{\link{\cH}{V_j}}(X) = Y_1$.
\end{enumerate}
\end{defn}

The first lemma of this section states that perfectly cromulent triples are the kind of substructure we should look for in order to prove our main theorem.

\begin{lem} \label{lem:induction}
Let $\cH$ be a $3$-partite $3$-graph with $\tau(\cH) = 2\nu(\cH)$. If $\cH$ has a perfectly cromulent triple, then $\cH$ is a home-base hypergraph.
\end{lem}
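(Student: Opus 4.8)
The plan is to take the perfectly cromulent triple $(Y_1, Y_2, X)$ and show directly that its data can be merged with a home-base partition of the smaller hypergraph $\cH_0 = \cH - (Y_1 \cup Y_2 \cup X)$ to build a home-base partition of $\cH$. By condition~\ref{cromulent:homebase}, $\cH_0$ is a home-base hypergraph with $\nu(\cH_0) = \nu(\cH) - \abs{Y_1}$; fix a home-base partition $(\cF_0, \cR_0, W_0)$ of it, which by Proposition~\ref{prop:matchableedgehome}-type reasoning (it is matchable, proper and has the edge-home property) we may treat as given. The matching in $\cH|_{Y_1 \cup Y_2 \cup X}$ of size $\abs{Y_1}$ from condition~\ref{cromulent:matching}, call its edges $e_1, \dots, e_{|Y_1|}$, will be the ``new'' part of the structure. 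The natural guess is to put each $Y_1$-$Y_2$ pair together with one $X$-vertex from $e_t$ into $\cR$ as a new triple $R_t$ (so $\cR = \cR_0 \cup \{R_1, \dots, R_{|Y_1|}\}$), keep $\cF = \cF_0$, and let $W = W_0 \cup (X \setminus \{\text{the chosen } X\text{-vertices}\})$.

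The key steps, in order, are: (i) check that $\cF \cup \cR \cup \{W\}$ partitions $V(\cH)$ — this is immediate since $Y_1 \cup Y_2 \cup X$ is exactly the complement of $V(\cH_0)$ and the $|Y_1|$ edges $e_t$ use each $Y_1$-vertex, each $Y_2$-vertex, and one $X$-vertex each (using $\abs{Y_1} = \abs{Y_2} \le \abs{X}$); (ii) verify that $\abs{\cF \cup \cR} = \abs{\cF_0} + \abs{\cR_0} + \abs{Y_1} = \nu(\cH_0) + \abs{Y_1} = \nu(\cH)$, which is condition~\ref{fr:size}; (iii) confirm each new $R_t$ is a three-vertex transversal, which holds because $e_t$ is an edge of the $3$-partite $3$-graph; (iv) establish matchability of the new $B_i$'s — for each new $R_t$, the $X$-vertex of $R_t$ is in $V_k$ and sits in $e_t$ with the two $Y$-vertices, so we must produce, in the other two vertex classes, $W$-vertices forming $R_t$-edges, and then show these choices can be made simultaneously with a matching saturating $\cR_0$; (v) verify the edge-home property, i.e. every edge of $\cH$ is an $\cF$-edge or an $\cR$-edge. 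For step (v) the edges inside $\cH_0$ are fine by induction, and the crucial point is the edges meeting $Y_1 \cup Y_2 \cup X$: here condition (5*) that $N_{\link{\cH}{V_j}}(X) = Y_1$ pins down exactly which vertices the $X$-vertices can see across, forcing every edge through an $X$-vertex to have its other two vertices controlled, so that it contains two vertices of some $R_t$ (hence is an $\cR$-edge) — and once we have a proper matchable FR-partition, Proposition~\ref{prop:matchableedgehome} gives the edge-home property for free, so it suffices to arrange properness and matchability.

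The main obstacle I expect is step (iv), matchability: we need each auxiliary bipartite graph $B_i$ (for the enlarged $\cR$) to have a matching saturating all of $\cR = \cR_0 \cup \{R_1,\dots,R_{|Y_1|}\}$, and a priori the $W$-neighbors demanded by the new $R_t$'s could collide with each other or with the $W_0$-vertices needed for $\cR_0$. To handle this I would use condition~\ref{cromulent:neighborhood} ($N_{\link{\cH}{V_i}}(X) = Y_2$) together with (5*) to understand precisely which hyperedges pass through each $X$-vertex, and the matching $\{e_t\}$ itself to seed a system of distinct representatives; a Hall-type argument, combined with the fact that superfluous $X$-vertices (those not chosen into any $R_t$) land harmlessly in $W$, should close the gap. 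A secondary subtlety is ensuring the resulting FR-partition is \emph{proper} — no stray edge on $W$-vertices induces a truncated Fano plane — but this follows because any such edge would have to live partly in $Y_1 \cup Y_2 \cup X$, and (5*) plus~\ref{cromulent:neighborhood} restrict the cross-edges enough to rule it out; then Proposition~\ref{prop:matchableedgehome} finishes. Finally, $\tau(\cH) = 2\nu(\cH)$ is used only to know we are in the regime where the construction is forced, not in the construction itself, so it plays a background role here.
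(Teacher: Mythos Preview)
Your overall architecture is right---extend the home-base partition $(\cF_0,\cR_0,W_0)$ of $\cH_0$ by one new $R$ per matching edge $e_t$, then invoke Proposition~\ref{prop:matchableedgehome}---but the specific choice $R_t=\{y_t,y_t',x_t\}$ with $x_t\in X$ breaks matchability, and this is not a technicality that a Hall argument can repair. Here is why: for the auxiliary graph $B_1$ of the new partition you need, for each $R_t$, some $w\in W\cap V_1$ together with an $R_t$-edge through $w$; since $y_t\in V_1$, such an edge must be $w\,y_t'\,x_t$. But condition~(5*) says $N_{\link{\cH}{V_j}}(X)=Y_1$, i.e.\ every hyperedge through any $X$-vertex has its $V_1$-vertex in $Y_1$. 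As all of $Y_1$ has just been placed into $V(\cR)$, no such $w\in W\cap V_1$ exists at all. The same obstruction, via condition~\ref{cromulent:neighborhood}, kills matchability of $B_2$. So the ``natural guess'' fails outright, and no amount of shuffling among the $X$-vertices helps, since the problem is that $X$-vertices see only $Y_1\times Y_2$.

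The paper's remedy is to put the $X$-vertices into $W$ and instead choose the $V_3$-vertex of each new $R$ to be a carefully located superfluous vertex $z_{y,y'}\in W_0\cap V_3$. The existence and uniqueness of such a vertex (Lemma~\ref{lem:prince}) is the heart of the argument and requires a chain of preliminary lemmas (Lemmas~\ref{lem:existsynrw}--\ref{lem:yxstoyss}) that repeatedly exploit $\tau(\cH)=2\nu(\cH)$ via partial covers of size $\tau(\cH)-1$ together with Corollary~\ref{cor:monstercor}. With the new $R$'s defined as $\{y,y',z_{y,y'}\}$, the matching edge $e_t$ itself witnesses matchability in $B_3'$ (its $X$-vertex is now a $W$-vertex adjacent to $y,y'\in R$), while matchability in $B_1'$ and $B_2'$ is handled by separate Hall-type counts (Lemmas~\ref{lem:v3matching} and~\ref{lem:v1v2matching}). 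Your claim that $\tau=2\nu$ plays only a background role is thus also mistaken: it is used essentially and repeatedly to force the existence of the edges that make the construction go through.
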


Unfortunately, it is sometimes hard to ensure property~(5*), and it will be easier to find just cromulent triples instead. Fortunately, we will be able to prove that this suffices.

\begin{lem} \label{lem:perfectlycromulent}
If $\cH$ is a $3$-partite $3$-graph with $\tau(\cH) = 2\nu(\cH)$, then every cromulent triple of $\cH$ is perfectly cromulent.
\end{lem}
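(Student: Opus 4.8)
The plan is to show that a cromulent triple $(Y_1, Y_2, X)$ cannot fail to satisfy the perfect condition (5*), i.e. that $N_{\link{\cH}{V_j}}(X) = Y_1$, given the global hypothesis $\tau(\cH) = 2\nu(\cH)$. We already know from \ref{cromulent:edges} that $N_{\link{\cH}{V_j}}(X) \subseteq Y_1 \cup V(\cR) \cup V(\cF)$ for any home-base partition $(\cF, \cR, W)$ of $\cH_0 = \cH - (Y_1 \cup Y_2 \cup X)$, and from \ref{cromulent:homebase} that $\nu(\cH_0) = \nu(\cH) - \abs{Y_1}$. So the task is twofold: first, rule out that $N_{\link{\cH}{V_j}}(X)$ meets $V(\cR) \cup V(\cF)$; second, rule out that $N_{\link{\cH}{V_j}}(X)$ is a proper subset of $Y_1$ — actually only the containment $\subseteq Y_1$ needs proving, since $\link{\cH}{V_i}$ restricted to $X$ has neighborhood $Y_2$ by \ref{cromulent:neighborhood}, and the hypergraph matching of size $\abs{Y_1}$ inside $Y_1 \cup Y_2 \cup X$ from \ref{cromulent:matching} forces every vertex of $Y_1$ to be hit by some edge meeting $X$, giving $Y_1 \subseteq N_{\link{\cH}{V_j}}(X)$ automatically.

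For the first part, suppose some vertex $v \in N_{\link{\cH}{V_j}}(X)$ lies in $V(\cR) \cup V(\cF)$. Then there is an edge $e$ of $\cH$ with $e \cap V_k \in X$ and $v \in e$. The strategy is to build a matching of size $\nu(\cH) + 1$ and derive a contradiction. I would start with the size-$\abs{Y_1}$ matching $\cM_X$ inside $\cH|_{Y_1 \cup Y_2 \cup X}$ guaranteed by \ref{cromulent:matching}; since $\abs{Y_2} = \abs{Y_1}$ and $N_{\link{\cH}{V_i}}(X) = Y_2$, this matching saturates $Y_2$, and by \ref{cromulent:size} uses $\abs{Y_1}$ vertices of $X$. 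Now I want to extend this to a matching of $\cH$ that reaches into $\cH_0$: the point is that $\cH_0$ is home-base with $\nu(\cH_0) = \nu(\cH) - \abs{Y_1}$, and the monster lemma (Lemma~\ref{lem:monster}), or more precisely its corollary (Corollary~\ref{cor:monstercor}), lets us delete the few vertices of $\cH_0$ that $e$ touches — $v$ together with at most one more vertex from $\cH_0$ in a third class — without dropping $\nu(\cH_0)$, precisely because $v \in V(\cF) \cup V(\cR)$ falls into one of the covered cases of that corollary. The surviving matching of $\cH_0 - \{v, \dots\}$ of size $\nu(\cH_0)$, together with $e$ and a suitably pruned version of $\cM_X$ (dropping the at most one edge of $\cM_X$ that shares the $X$-vertex of $e$, which is permissible since $\abs{X} \geq \abs{Y_1}$ leaves room, or rather choosing $\cM_X$ so that $e$'s $X$-vertex is unused, which needs a small argument), yields $\nu(\cH_0) + \abs{Y_1} + 1 = \nu(\cH) + 1$ disjoint edges, a contradiction. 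I expect the bookkeeping of exactly how $e$ interacts with $\cM_X$ and with $\cH_0$ to be the fiddly part, and the main obstacle overall.

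For the second part — that $N_{\link{\cH}{V_j}}(X) \subseteq Y_1$ — the argument is similar in spirit: any $v \in N_{\link{\cH}{V_j}}(X) \setminus (Y_1 \cup Y_2 \cup X)$ would have to lie in $\cH_0$, hence (by \ref{cromulent:edges}) in $V(\cF) \cup V(\cR)$, which is the case already handled; and $v$ cannot lie in $Y_2$ or $X$ since those are in different vertex classes from $V_j$ by the definition of a cromulent triple. So in fact the two parts collapse into one: the whole content is showing that $N_{\link{\cH}{V_j}}(X)$ avoids $V(\cF) \cup V(\cR)$, and then \ref{cromulent:edges} plus the automatic reverse inclusion $Y_1 \subseteq N_{\link{\cH}{V_j}}(X)$ finish it. I would organize the writeup around a single claim: if $v \in N_{\link{\cH}{V_j}}(X) \cap (V(\cF) \cup V(\cR))$ then $\cH$ has a matching of size $\nu(\cH) + 1$; split into the subcases $v \in V(\cF)$, $v \in R \in \cR$ with $v$ essential, and $v \in R$ not essential, matching them to cases \ref{monstercor:fwx}, \ref{monstercor:enenr}/\ref{monstercor:rnrnr}, and \ref{monstercor:nenrx} of Corollary~\ref{cor:monstercor} as appropriate; conclude in each that the deletion does not lower $\nu(\cH_0)$, assemble the oversized matching, and contradict the definition of $\nu$.
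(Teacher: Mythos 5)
Your outline identifies the right target (an edge $e$ through $X$ and a vertex $v\in V(\cF)\cup V(\cR)$ should force an oversized matching) but the accounting has a genuine gap, and it is precisely the gap whose repair is the entire content of the paper's proof.

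The problem is that the bad edge $e$ necessarily contains a vertex of $Y_2$. Indeed $e$ has one vertex in each class: $v\in V_i$, some $z\in X\subseteq V_k$, and a third vertex $w\in V_j$. Deleting the $V_i$-coordinate shows $wz$ is an edge of $\link{\cH}{V_i}$, so by condition~\ref{cromulent:neighborhood} we get $w\in N_{\link{\cH}{V_i}}(X)=Y_2$. Since $\abs{Y_2}=\abs{Y_1}$, \emph{every} hypergraph matching $\cM_X$ of size $\abs{Y_1}$ inside $\cH|_{Y_1\cup Y_2\cup X}$ saturates $Y_2$, and in particular contains an edge through $w$. That edge must be pruned, and no amount of choosing $\cM_X$ cleverly (e.g.\ to avoid the $X$-vertex of $e$, which you do discuss) can avoid the $Y_2$-conflict. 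After pruning the $w$-edge, your count becomes $\nu(\cH_0)+(\abs{Y_1}-1)+1=\nu(\cH)$ rather than $\nu(\cH)+1$, so you get no contradiction. The invocation of Corollary~\ref{cor:monstercor} is also not really needed for the $\cH_0$-side: $e$ intersects $\cH_0$ only in the single vertex $v$, and a maximum matching of $\cH_0$ avoiding a single non-$W$ vertex is obtained directly by choosing WRR-edges and $\cF$-edges avoiding $v$; the monster machinery is not the obstacle.

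What the paper does instead is to abandon $\cM_X$ altogether and replace it by a different hypergraph matching $\cM'$ of size $\abs{Y_1}$, namely the one furnished by Lemma~\ref{lem:v1v2matching}: each $y\in Y_1$ is matched into an edge of the form $yss'$ where $s,s'$ are \emph{superfluous} $W$-vertices of $\cH_0$. This $\cM'$ saturates $Y_1$ but avoids $Y_2\cup X$ entirely, so it is disjoint from $e=vwz$ (as $v\notin W$, $w\in Y_2$, $z\in X$). Combining $\cM'$ with a matching of $\cF$-edges and WRR-edges for $\cH_0$ avoiding $v$ gives $\abs{Y_1}+\nu(\cH_0)=\nu(\cH)$ pairwise disjoint edges all missing $e$, and adding $e$ gives the contradiction. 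The existence of $\cM'$ is nontrivial and rests on the whole apparatus of Lemmas~\ref{lem:existsynrw}--\ref{lem:v1v2matching}; it is the genuine idea here, and it is what your proposal is missing.
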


These two lemmas combine to give the main result of this section as an immediate corollary:

\begin{cor} \label{cor:cromulenttohbh}
Let $\cH$ be a $3$-partite $3$-graph with $\tau(\cH) = 2\nu(\cH)$. If $\cH$ has a cromulent triple, then $\cH$ is a home-base hypergraph.
\end{cor}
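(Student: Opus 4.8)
The final statement, Corollary~\ref{cor:cromulenttohbh}, is an immediate consequence of the two preceding lemmas, so the ``proof'' is essentially a one-line deduction; the real content lies in Lemmas~\ref{lem:induction} and~\ref{lem:perfectlycromulent}. I will first say how the corollary is assembled, and then sketch how I would attack the two lemmas it rests on.

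\textbf{Assembling the corollary.} Suppose $\cH$ is a $3$-partite $3$-graph with $\tau(\cH) = 2\nu(\cH)$ and $\cH$ has a cromulent triple $(Y_1, Y_2, X)$. By Lemma~\ref{lem:perfectlycromulent}, which uses the hypothesis $\tau(\cH) = 2\nu(\cH)$, this cromulent triple is in fact perfectly cromulent. By Lemma~\ref{lem:induction}, again invoking $\tau(\cH) = 2\nu(\cH)$, the existence of a perfectly cromulent triple forces $\cH$ to be a home-base hypergraph. That is all the corollary asserts.

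\textbf{Plan for Lemma~\ref{lem:induction}.} The idea is that a perfectly cromulent triple is precisely engineered so that a home-base partition of $\cH_0 = \cH - (Y_1 \cup Y_2 \cup X)$, which exists by condition~\ref{cromulent:homebase}, can be extended to one of $\cH$. First I would take a home-base partition $(\cF, \cR, W)$ of $\cH_0$. The natural candidate for the extended partition is to add a new block built from $Y_1 \cup Y_2 \cup X$: either a new member of $\cR$ (or several), using the matching from condition~\ref{cromulent:matching} of size $\abs{Y_1}$, together with the leftover vertices going into $W$. I would use conditions~\ref{cromulent:size} and the perfect-cromulence condition (5*), which says $N_{\link{\cH}{V_j}}(X) = Y_1$, together with condition~\ref{cromulent:neighborhood} saying $N_{\link{\cH}{V_i}}(X) = Y_2$, to control exactly which hyperedges touch the new vertices: every hyperedge of $\cH$ meeting $Y_1 \cup Y_2 \cup X$ but not lying inside $\cH_0$ must, because of these neighborhood constraints, be an $\cR$-edge (or $\cF$-edge) for the new blocks. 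Then I would check the FR-partition axioms~\ref{fr:partition}--\ref{fr:size}, the matchability of the new auxiliary bipartite graphs $B_i$ (this is where the size equality $\abs{Y_1} = \abs{Y_2}$ and the matching of condition~\ref{cromulent:matching} feed in, via Hall's theorem / the machinery of the essential-set lemmas of Section~\ref{sec:homebasehypergraphs}), and properness; then Proposition~\ref{prop:matchableedgehome} upgrades a proper matchable FR-partition to a home-base partition. The count $\nu(\cH_0) = \nu(\cH) - \abs{Y_1}$ from condition~\ref{cromulent:homebase} guarantees axiom~\ref{fr:size} holds for the extended partition.

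\textbf{Plan for Lemma~\ref{lem:perfectlycromulent}, and the main obstacle.} This is the substantive step. Given a cromulent triple, I must upgrade condition~\ref{cromulent:edges}, which only says $N_{\link{\cH}{V_j}}(X) \subseteq Y_1 \cup V(\cR) \cup V(\cF)$, to the equality (5*), $N_{\link{\cH}{V_j}}(X) = Y_1$. The approach is by contradiction: if some vertex $u \in N_{\link{\cH}{V_j}}(X) \setminus Y_1$ existed, it would have to lie in $V(\cR) \cup V(\cF)$ of the home-base partition $(\cF, \cR, W)$ of $\cH_0$, and I would try to build a matching in $\cH$ of size exceeding $\nu(\cH) = \tau(\cH)/2$ --- contradicting $\tau(\cH) \geq \nu(\cH)$ --- or else contradict $\tau(\cH) = 2\nu(\cH)$ directly by exhibiting a vertex cover of size less than $2\nu(\cH)$. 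The key tool here should be the monster lemma (Lemma~\ref{lem:monster}) and its corollary (Corollary~\ref{cor:monstercor}): a hyperedge through $X$ hitting such a stray vertex $u$ in an $R$ or an $F$ of $\cH_0$ can be combined with the matching of size $\abs{Y_1}$ inside $Y_1 \cup Y_2 \cup X$ and with a near-perfect matching of $\cH_0$ avoiding $\lset{a,b,c}$-type vertices, producing one edge too many. The delicate point, and what I expect to be the main obstacle, is handling the various cases for where $u$ sits (inside some $R \in \cR$, inside some $F \in \cF$, essential or not), ensuring in each case that the monster lemma's hypotheses~\ref{monster:fedge} and~\ref{monster:redge} genuinely hold so that $\cH_0$ retains its full matching number after deleting the relevant vertices, and simultaneously keeping the new edge through $X$ disjoint from that matching. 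Since $\tau(\cH) = 2\nu(\cH)$ is assumed, the extremal rigidity it imposes (via Theorem~\ref{thm:connoflink} and Lemma~\ref{lem:goodsets}) is what ultimately rules out every case.
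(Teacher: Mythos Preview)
Your assembly of the corollary from Lemmas~\ref{lem:induction} and~\ref{lem:perfectlycromulent} is correct and matches the paper exactly; the corollary itself is indeed a one-line deduction.

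However, your sketch for Lemma~\ref{lem:induction} contains a genuine gap. You propose that the new members of $\cR$ be built \emph{from} $Y_1 \cup Y_2 \cup X$, using the matching edges of condition~\ref{cromulent:matching} as the new $R$'s and sending leftover $X$-vertices to $W$. This cannot work: if $R = \{y, y', x\}$ with $y \in Y_1$, $y' \in Y_2$, $x \in X$, then matchability of the new $B_1'$ requires an edge of $\cH$ of the form $wy'x$ with $w \in W' \cap V_1$. But condition~(5*) says every hyperedge through $x$ has its $V_1$-vertex in $Y_1$, and $Y_1$ is disjoint from $W'$ (its vertices are going into the new $R$'s). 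So no such edge exists, and matchability fails. The same obstruction hits $B_2'$.

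The paper's resolution is that the third vertex of each new $R$ must come from \emph{inside} $\cH_0$: for each matched pair $(y, y')$ one locates a specific superfluous vertex $z_{y,y'} \in W \cap V_3$ (Lemma~\ref{lem:prince}), sets $R = \{y, y', z_{y,y'}\}$, and puts \emph{all} of $X$ into $W'$. Then the matching edge $yy'x$ (with $x \in X \subseteq W'$) supplies the $B_3'$-matchability for this $R$, while edges of the form $yvz_{y,y'}$ and $uy'z_{y,y'}$ supply $B_2'$- and $B_1'$-matchability. Establishing the existence and uniqueness of $z_{y,y'}$, that the $z_{y,y'}$'s are pairwise distinct, and that the resulting partition is matchable (Lemmas~\ref{lem:existsynrw}--\ref{lem:v1v2matching}) is where almost all the work lies, and it relies repeatedly on the monster lemma and on heavy-cover counting arguments against $\tau(\cH) = 2\nu(\cH)$. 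Your plan for Lemma~\ref{lem:perfectlycromulent} is in the right spirit, but note that the paper's proof of that lemma also leans on this same machinery (specifically Lemma~\ref{lem:v1v2matching}) rather than a direct case analysis on the location of $u$.
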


The proofs of the two lemmas follow similar lines, and so they will be handled in parallel. The basic idea is outlined below. We start with Lemma~\ref{lem:induction}.

Let $(Y_1, Y_2, X)$ be a perfectly cromulent triple, and let $\cH_0 = \cH - (Y_1 \cup Y_2 \cup X)$ be the hypergraph from the definition of cromulent triples. Let $(\cF, \cR, W)$ be a home-base partition of $\cH_0$. Our goal will be to extend this partition into a home-base partition $(\cF', \cR', W')$ of $\cH$. Fix a maximum hypergraph matching $\cM$ in $\cH|_{Y_1 \cup Y_2 \cup X}$. Each pair $y \in Y_1$, $y' \in Y_2$ that are together in an edge of $\cM$ will participate in a new $R \in \cR'$ together with a uniquely determined member of $W \cap V_3$. The vertices in $X$ will be vertices of $W'$, and by virtue of the matching saturating $Y_1$ and $Y_2$, they will ensure a matching saturating $\cR'$ exists in the bipartite graph $B_3'$. The rest of the section will be devoted to finding the member of $W \cap V_3$ we can include in our new $R$'s and proving that the resulting partition $(\cF', \cR', W')$ is indeed a home-base partition. Our fundamental tool in this proof will be Corollary~\ref{cor:monstercor}, and we will finish by using Proposition~\ref{prop:matchableedgehome}.

If $(Y_1, Y_2, X)$ was simply a cromulent triple, then much of the same proof as above still goes through in a more restricted form, and eventually we will be able to find a contradiction if $(Y_1, Y_2, X)$ violated condition~(5*), which will show Lemma~\ref{lem:perfectlycromulent}.

We first introduce a notion which will be helpful for our upcoming proofs.

\subsection{Heavy Vertex Covers}

Recall the definition of essential subsets and superfluous vertices from Section~\ref{sec:homebasehypergraphs}.

The following is a particular type of vertex cover for home-base hypergraphs, which will be useful for the proofs in this and the next section.

\begin{defn}
Let $\cH$ be a home-base hypergraph on vertex classes $V_1$, $V_2$, and $V_3$ with a home-base partition $(\cF, \cR, W)$, and let $i, j \in \lset{1, 2, 3}$ with $i \neq j$. Let $C_i \subseteq W \cap V_i$ be the maximal essential set in $B_i$ and let $\cU_i \subseteq \cR$ be the set with $\abs{\cU_i} = \abs{C_i}$ and $N_{B_i}(\cU_i) = C_i$. Then the union of the sets 
\begin{itemize}
	\item $C_i \cup \left( \left( V(\cF) \cup V(\cR) \right) \cap V_i \right) $
	\item $\left( \bigcup_{R \in \cR \setminus \cU_i} R \right) \cap V_j$
\end{itemize}
is called the \emph{$i$-heavy $(i, j)$-cover} of $\cH$.
\end{defn}

\begin{obs} \label{obs:heavysuperfluous}
Every vertex in $V_i$ which is not in the $i$-heavy $(i, j)$-cover is a superfluous vertex in $W \cap V_i$.
\end{obs}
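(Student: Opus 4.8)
The statement says: let $v \in V_i$ be a vertex that does not lie in the $i$-heavy $(i,j)$-cover; then $v$ is a superfluous vertex of $W \cap V_i$, i.e. $v \in W \cap V_i$ and $v \notin C_i$ (the maximal essential set of $B_i$). Since the $i$-heavy $(i,j)$-cover contains, among its $V_i$-part, the set $C_i \cup \bigl((V(\cF) \cup V(\cR)) \cap V_i\bigr)$, the first step is simply to unpack which vertices of $V_i$ are \emph{not} in the cover: these are exactly the vertices of $V_i$ that lie outside $C_i$, outside $V(\cF)$, and outside $V(\cR)$. But by condition~\ref{fr:partition} of the definition of an FR-partition, $\cF \cup \cR \cup \lset{W}$ partitions $V(\cH)$, so a vertex of $V_i$ outside $V(\cF) \cup V(\cR)$ must lie in $W$, hence in $W \cap V_i$. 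Combined with $v \notin C_i$, this says precisely that $v \in (W \cap V_i) \setminus C_i$, which is the definition of a superfluous vertex (recall the discussion following Lemma~\ref{lem:essential}, where a vertex of $W$ not in the maximal essential set is called superfluous, applied here to $B_i$). So the proof is essentially a one-line bookkeeping argument once the definitions are laid side by side.

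I would write it as: take $v \in V_i$ not in the $i$-heavy $(i,j)$-cover. Since $C_i \cup \bigl((V(\cF) \cup V(\cR)) \cap V_i\bigr)$ is a subset of that cover, $v \notin C_i$ and $v \notin V(\cF) \cup V(\cR)$. By~\ref{fr:partition}, the only remaining possibility is $v \in W$, so $v \in W \cap V_i$. Since $C_i$ is by definition the maximal essential subset of $W \cap V_i$ in $B_i$ and $v \in (W \cap V_i) \setminus C_i$, the vertex $v$ is superfluous, as claimed.

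There is no real obstacle here: the only thing to be careful about is to cite the FR-partition property~\ref{fr:partition} correctly (to rule out that $v$ is in some $F$ or $R$) and to note that the $V_j$-part of the cover — the set $\bigl(\bigcup_{R \in \cR \setminus \cU_i} R\bigr) \cap V_j$ — is irrelevant to a vertex $v \in V_i$, so it plays no role. Everything else is definitional. In particular, no use of matchability, the monster lemma, or the hypothesis $\tau = 2\nu$ is needed; the observation holds for any home-base hypergraph with a fixed home-base partition, just as stated.
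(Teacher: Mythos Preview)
Your proof is correct and is precisely the intended argument. The paper states this as an observation without proof, since it follows immediately from the definition of the $i$-heavy $(i,j)$-cover together with the partition property~\ref{fr:partition}; your write-up makes that unpacking explicit.
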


\begin{prop}
If $\cH$ is a home-base hypergraph on vertex classes $V_1$, $V_2$, and $V_3$ with a home-base partition $(\cF, \cR, W)$, then for every pair $i, j \in \lset{1, 2, 3}$ with $i \neq j$, the $i$-heavy $(i, j)$-cover is a minimal vertex cover of $\cH_0$.
\end{prop}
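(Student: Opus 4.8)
The proof splits naturally into two parts: showing that the set $T$ (the $i$-heavy $(i,j)$-cover) is a vertex cover, and showing that it is minimal. For the covering property, I would use the edge-home property of the home-base partition $(\cF,\cR,W)$ (which holds by definition of a home-base partition, cf.\ \lemref{def:homebase}). Every edge $e$ of $\cH_0$ is either an $F$-edge for some $F\in\cF$ or an $R$-edge for some $R\in\cR$. If $e$ is an $F$-edge, then $e$ meets $V(\cF)\cap V_i$, which is contained in $T$. If $e$ is an $R$-edge, there are two cases. If $R\in\cR\setminus\cU_i$, then $e$ contains two vertices of $R$, at least one of which lies in $V_j$ (since $e$ meets only one vertex of each class and $R$ has one vertex in each class, the two $R$-vertices of $e$ occupy two of the three classes, hence at least one is in $V_j$); that vertex lies in $\bigl(\bigcup_{R'\in\cR\setminus\cU_i}R'\bigr)\cap V_j\subseteq T$. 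If instead $R\in\cU_i$, then I would argue that $e$ must be covered by $V(\cR)\cap V_i$ or by $C_i$: the vertex $r_i\in R\cap V_i$ is in $V(\cR)\cap V_i\subseteq T$, so if $e$ contains $r_i$ we are done; otherwise $e$'s two $R$-vertices are $r_j$ and $r_k$, and its third vertex $w$ lies in $V_i$; since $Rw$ is then an edge of $B_i$ witnessing $w\in N_{B_i}(R)\subseteq N_{B_i}(\cU_i)=C_i\subseteq T$, we are again done. So $T$ covers every edge of $\cH_0$.

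For minimality, I would show that for each $v\in T$ there is an edge of $\cH_0$ meeting $T$ only in $v$. I would handle the vertices of $T$ according to which of the defining pieces they come from. For $v\in C_i$: since $v$ is essential in $B_i$, there is a set $\cU\subseteq\cR$ with $N_{B_i}(\cU)=C_i$, $|\cU|=|C_i|$; by minimality/Hall-type reasoning there is $R\in\cU$ with $v$ a neighbor of $R$, and one can find such an $R$ whose only $B_i$-neighbor is $v$ — then the $R$-edge $r_j r_k v$ meets $T$ only at $v$, because $r_j,r_k$ are the non-$V_i$ vertices of $R\in\cU_i$ and hence avoid $\bigl(\bigcup_{R'\in\cR\setminus\cU_i}R'\bigr)\cap V_j$, provided the $R'$ are pairwise disjoint (which they are). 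This is the step I expect to be the main obstacle: carefully producing, for each essential $v$, a member $R\in\cU_i$ "privately" matched to $v$, and checking that the two remaining vertices of that $R$ truly avoid the rest of $T$ — in particular that $r_j$ does not accidentally lie in some other $R''\in\cR\setminus\cU_i$. Disjointness of the $R$'s rules this out, but I would want to state it explicitly. For $v\in V(\cF)\cap V_i$: since $\cH_0|_F$ is a truncated multi-Fano plane, it has an edge containing $v$ but not the other vertex of $F$ in $V_i$; as $F$-edges live entirely inside $F$, this edge avoids all of $T$ outside $F$, and inside $F$ it can meet $T$ only in $v$ (the only $T$-vertex of $F$ in $V_i$, and $F\cap V_j$ contributes nothing to $T$). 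For $v=r_i\in V(\cR)\cap V_i$ with $R\in\cR$: use an $R$-edge of the form $r_i r_j w$ or $r_i w r_k$ with $w\in W$; one checks it meets $T$ only at $r_i$ — the $W$-vertex $w$ is superfluous or at least avoidable, and $r_j$ (resp.\ $r_k$) must be chosen to avoid $T$, which again reduces to disjointness of the members of $\cR$ and the fact that for $R\in\cU_i$ the vertices $r_j,r_k\notin\bigcup_{R'\notin\cU_i}R'$, while for $R\notin\cU_i$ one picks the edge so that its $V_j$-vertex is $w\in W$ rather than $r_j$. For $v\in\bigl(\bigcup_{R\in\cR\setminus\cU_i}R\bigr)\cap V_j$, say $v=r_j$ with $R\in\cR\setminus\cU_i$: pick an $R$-edge $w r_j r_k$ with $w\in W\cap V_i$ (possible by matchability of $B_i$ — and one can choose $w$ not in $C_i$, using that $R\notin\cU_i$); this edge meets $T$ only at $r_j$.

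Throughout, the two facts doing the work are (a) the edge-home property, which restricts the edges of $\cH_0$ to $F$-edges and $R$-edges, and (b) the pairwise disjointness of the members of $\cF\cup\cR$ together with the matchability of the $B_i$. I would organize the minimality argument as a short case analysis over the four types of vertices in $T$, and I anticipate that the bookkeeping for the $C_i$-vertices and the $V(\cR)\cap V_i$-vertices — ensuring the chosen witness edge is genuinely private — is where care is needed; everything else is routine once the edge-home property is invoked. A remark worth including: Observation~\ref{obs:heavysuperfluous} guarantees that vertices of $V_i$ outside $T$ are superfluous, which is consistent with (and in the covering direction, essentially equivalent to) $T$ being a cover when restricted to $V_i$, and can be used to streamline the covering argument for edges meeting $V_i$ in a $W$-vertex.
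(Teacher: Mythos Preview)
Your covering argument follows the paper's, with one slip: for an $R$-edge $e$ with $R\in\cR\setminus\cU_i$, you claim the two $R$-vertices of $e$ must include one in $V_j$. This is false---they could be $r_i$ and $r_k$. The correct statement (which the paper uses) is that $e$ contains at least one of $r_i,r_j$, and both of these lie in $T$ when $R\notin\cU_i$; this holds because an $R$-edge contains two of $\{r_i,r_j,r_k\}$.

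For minimality you diverge sharply from the paper. The paper simply computes $\abs{T}$: the pieces in $V_i$ contribute $2\abs{\cF}+\abs{\cR}+\abs{C_i}$, the piece in $V_j$ contributes $\abs{\cR}-\abs{\cU_i}$, and since $\abs{C_i}=\abs{\cU_i}$ this gives $\abs{T}=2\abs{\cF}+2\abs{\cR}=2\nu(\cH)$. Because home-base hypergraphs are Ryser-extremal (Proposition~\ref{prop:homebasetight}), $\abs{T}=\tau(\cH)$, so $T$ is a \emph{minimum} cover, hence minimal. That is the entire argument.

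Your route of exhibiting a private witness edge for each $v\in T$ can be made to work, but it is far longer and contains an error. For $v\in C_i$ you assert one can find $R\in\cU_i$ whose \emph{only} $B_i$-neighbor is $v$. This is false in general: $C_i$ is the maximal essential \emph{set}, and its elements need not be individually essential (take two members of $\cR$ each with $B_i$-neighborhood $\{v_1,v_2\}$). Fortunately you do not need it: since $v\in C_i=N_{B_i}(\cU_i)$, there is \emph{some} $R\in\cU_i$ adjacent to $v$, giving the edge $vr_jr_k$; then $r_j\notin T$ because $R\in\cU_i$ and the members of $\cR$ are pairwise disjoint, while $r_k\notin T$ because $T\subseteq V_i\cup V_j$. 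Your remaining cases go through once you note (via Hall) that $R\notin\cU_i$ forces $N_{B_i}(R)\not\subseteq C_i$. But in light of the two-line counting argument, the whole case analysis is unnecessary.
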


\begin{proof}
Let $T$ be the $i$-heavy $(i, j)$-cover of $\cH$. Let $e \in E(\cH)$. Then by the edge-home property, $e$ is at home in some $F \in \cF$ or some $R \in \cR$. If it is at home in $F$, then it contains some vertex in $F \cap V_i$, and so it intersects $T$. If it is at home in $R \in \cR \setminus \cU_i$, then it contains some vertex in $R \cap (V_i \cup V_j)$, and hence intersects $T$. The only remaining case is that $e$ is at home in some $R' \in \cU_i$. Let $V_i \cap e = \lset{v}$. If $v \in V(\cF) \cup V(\cR)$, then $e$ intersects $T$. If $v \in W \cap V_i$, then $vR'$ is an edge of $B_i$, and hence $v \in N_{B_i}(\cU_i) = C_i$, which shows that $e$ again intersects $T$. Thus $T$ is a vertex cover of $\cH$.

We now calculate the size of $T$. By the definition of the $i$-heavy $(i, j)$-cover, we get $\abs{T} = 2\abs{\cF} + \abs{\cR} + \abs{C_i} + \abs{\cR} - \abs{\cU_i}$. Since $\abs{C_i} = \abs{\cU_i}$, we get $\abs{T} = 2\abs{\cF} + 2\abs{\cR} = 2\abs{\cF \cup \cR} = 2\nu(\cH)$, and because home-base hypergraphs are tight for Ryser's Conjecture by Proposition~\ref{prop:homebasetight}, we get $\abs{T} = \tau(\cH)$ as desired.
\end{proof}

\subsection{Facts About Cromulent Triples}

We start with some lemmas about cromulent and perfectly cromulent triples. Note that properties~\ref{cromulent:neighborhood} and~(5*) make the roles of $Y_1$ and $Y_2$ symmetric in perfectly cromulent triples. This gives us the following observation:

\begin{obs} \label{obs:cromulentsymmetry}
$(Y_1, Y_2, X)$ is a perfectly cromulent triple if and only if $(Y_1, Y_2, X)$ and $(Y_2, Y_1, X)$ are both cromulent triples.
\end{obs}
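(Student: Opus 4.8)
\textbf{Proof proposal for Observation~\ref{obs:cromulentsymmetry}.}

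The plan is to verify that conditions~\ref{cromulent:size}, \ref{cromulent:matching}, and \ref{cromulent:homebase} of the definition of a cromulent triple are genuinely symmetric in $Y_1$ and $Y_2$, while the only asymmetry lies in the pair of conditions \ref{cromulent:neighborhood} and \ref{cromulent:edges}; the key point is that condition~(5*) is exactly what is needed to make these last two conditions symmetric as well. Assume first that $(Y_1, Y_2, X)$ is a perfectly cromulent triple, with $Y_1 \subseteq V_i$, $Y_2 \subseteq V_j$, $X \subseteq V_k$. I would first check that $(Y_1, Y_2, X)$ is in particular a cromulent triple, which is immediate since (5*) strengthens~\ref{cromulent:edges} (note $Y_1 \subseteq Y_1 \cup V(\cR) \cup V(\cF)$ trivially). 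For the triple $(Y_2, Y_1, X)$ — now regarded with the first set in $V_j$ and the second in $V_i$ — condition~\ref{cromulent:size} reads $|Y_2| = |Y_1| \le |X|$, which holds; condition~\ref{cromulent:matching} is a statement about the symmetric set $Y_1 \cup Y_2 \cup X$ and is unchanged; and condition~\ref{cromulent:homebase} involves only $\cH_0 = \cH - (Y_1 \cup Y_2 \cup X)$ and $\nu(\cH_0) = \nu(\cH) - |Y_1| = \nu(\cH) - |Y_2|$, so it too is unchanged. It remains to check \ref{cromulent:neighborhood} and \ref{cromulent:edges} for $(Y_2, Y_1, X)$: condition~\ref{cromulent:neighborhood} becomes $N_{\link{\cH}{V_j}}(X) = Y_1$, which is precisely (5*) for the original triple, and condition~\ref{cromulent:edges} becomes $N_{\link{\cH}{V_i}}(X) \subseteq Y_2 \cup V(\cR) \cup V(\cF)$, which follows from condition~\ref{cromulent:neighborhood} of the original triple, namely $N_{\link{\cH}{V_i}}(X) = Y_2$. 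Hence $(Y_2, Y_1, X)$ is cromulent.

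Conversely, suppose both $(Y_1, Y_2, X)$ and $(Y_2, Y_1, X)$ are cromulent triples. Then $(Y_1, Y_2, X)$ satisfies all five conditions of a cromulent triple; to upgrade it to perfectly cromulent I only need to establish (5*), i.e. $N_{\link{\cH}{V_j}}(X) = Y_1$. But applying condition~\ref{cromulent:neighborhood} to the triple $(Y_2, Y_1, X)$ gives exactly $N_{\link{\cH}{V_j}}(X) = Y_1$ (here $Y_2 \subseteq V_j$ plays the role of ``$Y_1$'' and the ambient class whose links we take is $V_j$). Thus (5*) holds and $(Y_1, Y_2, X)$ is perfectly cromulent.

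I do not anticipate a serious obstacle here; the content of the observation is really bookkeeping about which conditions in Definition of cromulent triples are symmetric under swapping $Y_1 \leftrightarrow Y_2$. The one place to be careful is making sure the indices $i$ and $j$ (and the corresponding link graphs $\link{\cH}{V_i}$ and $\link{\cH}{V_j}$) are tracked correctly when the roles of $Y_1$ and $Y_2$ are interchanged, since \ref{cromulent:neighborhood} refers to $\link{\cH}{V_i}$ while \ref{cromulent:edges} and (5*) refer to $\link{\cH}{V_j}$; once that correspondence is spelled out, the equivalence is a direct translation between the two sets of conditions. I would present this as a short paragraph simply matching up the numbered conditions, without invoking any of the deeper lemmas of the paper.
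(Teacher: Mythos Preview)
Your proposal is correct and matches the paper's approach: the paper states this as an observation immediately after remarking that conditions~\ref{cromulent:neighborhood} and~(5*) make the roles of $Y_1$ and $Y_2$ symmetric, and does not spell out a proof at all. Your argument is precisely the bookkeeping the paper leaves implicit, with the indices tracked correctly.
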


Most of the proofs in this section work for cromulent triples, and can be strengthened for perfectly cromulent triples by using Observation~\ref{obs:cromulentsymmetry}.

\begin{assm}
For the rest of this section, let $\cH$ be a $3$-partite $3$-uniform hypergraph with vertex classes $V_1$, $V_2$, and $V_3$ such that $\tau(\cH) = 2\nu(\cH)$, and assume it has a cromulent triple $(Y_1, Y_2, X)$. We will assume without loss of generality that $Y_1 \subseteq V_1$, $Y_2 \subseteq V_2$, and $X \subseteq V_3$. We also fix a hypergraph matching $M \subseteq E(\cH|_{Y_1 \cup Y_2 \cup X})$ of size $\abs{Y_1}$. Let $\cH_0 = \cH - (Y_1 \cup Y_2 \cup X)$ be the corresponding home-base hypergraph, and fix a home-base partition $(\cF, \cR, W)$ of $\cH_0$.
\end{assm}

\begin{lem} \label{lem:existsynrw}
For every pair $(i, j) \in \lset{(1, 2), (1, 3), (2, 1)}$ we have that for every $y \in Y_i$ there is an edge $ywu$, where $w \in W \cap V_j$, and $u \in V(\cH_0) \setminus V(\cR)$. If $(Y_1, Y_2, X)$ is perfectly cromulent, then this holds also for $(i, j) = (2, 3)$.
\end{lem}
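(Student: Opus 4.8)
The plan is to exploit the structure of a cromulent triple together with the matchability and edge-home properties of the home-base partition $(\cF, \cR, W)$ of $\cH_0$. Fix $y \in Y_i$. Since $(Y_1, Y_2, X)$ is a cromulent triple, there is a hypergraph matching $M$ in $\cH|_{Y_1 \cup Y_2 \cup X}$ of size $\abs{Y_1} = \abs{Y_2}$, so $y$ lies in an edge $e$ of $M$ containing a vertex of $X$. The obstacle is that $e$ need not be of the form we want; for $(i,j) = (1,2)$, say, the edge $e \in M$ contains $y \in Y_1$, a vertex of $Y_2$, and a vertex of $X$, none of which is in $W$. So $M$ alone is not enough — we need to bring in an edge that hits $W \cap V_j$ and avoids $V(\cR)$.

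**The main argument via the monster lemma.** The key idea is to assume for contradiction that no such edge $ywu$ exists, i.e. every edge of $\cH$ through $y$ that meets $W \cap V_j$ also meets $V(\cR)$ (in its third vertex). I would then try to show that $y$ together with suitably chosen vertices $b \in V_j$, $c \in V_k$ can be removed from (a superfluous-vertex-reduced version of) $\cH$ without dropping the matching number, using Corollary~\ref{cor:monstercor} applied to $\cH_0$'s FR-partition — but this must be done carefully because $y \notin V(\cH_0)$. The cleaner route: use the matching $M$ to build a matching of size $\abs{Y_1}$ in $\cH|_{Y_1 \cup Y_2 \cup X}$ and combine it with a home-base matching of $\cH_0$ of size $\nu(\cH_0) = \nu(\cH) - \abs{Y_1}$ to get a matching of size $\nu(\cH)$ in $\cH$. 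Now I want to reroute the $M$-edge through $y$: replace it by an edge $ywu$ with $w \in W \cap V_j$, $u \notin V(\cR)$, and show the resulting edge set is still a matching, which forces the existence of such an edge. The existence of $ywu$ itself is what we are proving, so instead I would argue: if every edge of $\cH$ containing $y$ and a vertex of $W \cap V_j$ also contains a vertex of $V(\cR) \cap V_k$, then a short case analysis (splitting on whether $N_{\link{\cH}{V_k}}$ restricted appropriately forces the edge into $\cR$-territory) combined with conditions~\ref{cromulent:neighborhood} and~\ref{cromulent:edges} of the cromulent triple shows that we could enlarge the matching or cover $\cH_0$ too cheaply, contradicting $\tau(\cH) = 2\nu(\cH)$ or the count $\nu(\cH_0) = \nu(\cH) - \abs{Y_1}$.

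**Handling the three pairs and the perfectly cromulent case.** For $(i,j) \in \{(1,2),(1,3),(2,1)\}$ the roles are: $Y_1 \subseteq V_1$, $Y_2 \subseteq V_2$, $X \subseteq V_3$, and in each case the "third class" vertex $u$ must avoid $V(\cR)$ — this is where condition~\ref{cromulent:edges}, $N_{\link{\cH}{V_j}}(X) \subseteq Y_1 \cup V(\cR) \cup V(\cF)$, and condition~\ref{cromulent:neighborhood}, $N_{\link{\cH}{V_i}}(X) = Y_2$, get used to control which link-graph edges are available. For the pair $(2,3)$ — i.e. $y \in Y_2$, $w \in W \cap V_3$ — the weaker condition~\ref{cromulent:edges} is not symmetric enough, but if $(Y_1, Y_2, X)$ is perfectly cromulent then by Observation~\ref{obs:cromulentsymmetry} the triple $(Y_2, Y_1, X)$ is also cromulent, and applying the already-proven case $(1,2)$ to that triple (with the roles of $V_1$ and $V_2$ swapped) yields exactly the statement for $(2,3)$.

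**Where the difficulty lies.** I expect the main obstacle to be the bookkeeping around superfluous vertices and around the fact that $y \in V(\cH) \setminus V(\cH_0)$: Corollary~\ref{cor:monstercor} is stated for a vertex in various positions relative to an FR-partition, and here $y$ is not a vertex of $\cH_0$ at all, so one must first pass to an FR-partition of a larger hypergraph or argue directly with matchings rather than invoking the corollary verbatim. The cleanest execution is probably to avoid the monster machinery entirely for this lemma and instead argue by a direct matching-augmentation contradiction: take the size-$\nu(\cH)$ matching described above, and show that if the desired edge $ywu$ did not exist, then either $M$ could be modified to free up a vertex of $W$ creating a larger matching, or the link-graph neighborhood conditions would be violated. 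Getting the case split clean — particularly ruling out that the only available edge through $y$ and $W \cap V_j$ lands its third vertex inside some $R \in \cR$ in a way that still lets us finish — will be the fiddly part.
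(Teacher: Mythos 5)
Your proposal never arrives at a proof; it circles around several candidate strategies, flags (correctly) that each one hits an obstacle, and stops. The paper's actual argument is much simpler than anything you consider, and it is of a type you never mention: a \emph{near-cover} argument. One explicitly writes down a vertex set $T$ of size exactly $\tau(\cH) - 1$, namely $T = (Y_1 \cup Y_2 \setminus \{y\}) \cup \bigl((V(\cF) \cup V(\cR)) \cap V_j\bigr) \cup \bigl(V(\cR) \cap V_k\bigr)$ where $k$ is the third index. The size computation uses $\nu(\cH_0) = \nu(\cH) - |Y_1|$ and $\tau(\cH) = 2\nu(\cH)$. Since $|T| < \tau(\cH)$, $T$ cannot cover all of $\cH$, so some edge $e$ misses it. One then just reads off from the pieces of $T$ what $e$ must look like: $T$ covers all of $E(\cH_0)$ by the edge-home property (two vertices from each $F$, two from each $R$); any edge meeting $X$ is covered via condition~\ref{cromulent:neighborhood} when $i=1$ (its $V_2$-vertex is in $Y_2\setminus\{y\}\subseteq T$ unless it is $y$, but $y\notin V_2$ then) and via condition~\ref{cromulent:edges} (or (5*) when needed) when $i=2$; so $e$ goes through $y$, its $V_j$-vertex is in $W$ because $(V(\cF)\cup V(\cR))\cap V_j \subseteq T$, and its $V_k$-vertex avoids $V(\cR)$ because $V(\cR)\cap V_k\subseteq T$. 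This is the entire proof, and it is exactly the ``extract an edge from a too-small cover'' template that comes up repeatedly later in the paper (Lemma~\ref{lem:existsyxs}, Lemma~\ref{lem:v3matching}, etc.), so recognizing it is important.

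Two further remarks. First, you correctly identify that invoking Corollary~\ref{cor:monstercor} is awkward because $y \notin V(\cH_0)$, and you correctly identify the circularity in the ``reroute the $M$-edge'' idea; but having ruled those out, you would need a concrete replacement, and the vague appeal to ``a short case analysis \dots shows that we could enlarge the matching or cover $\cH_0$ too cheaply'' is precisely the missing content. Second, your suggestion to derive the $(2,3)$ case from Observation~\ref{obs:cromulentsymmetry} applied to $(Y_2, Y_1, X)$ and then the case $(1,3)$ is sound and would indeed work once the base cases are established, though the paper handles $(2,3)$ uniformly inside the same near-cover computation: the only place (5*) is needed is to guarantee the $V_1$-vertex of an edge through $X$ lies in $Y_1 \subseteq T$, since for $j = 3$ the set $T$ no longer contains $V(\cF)\cap V_1$. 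So the symmetry observation is a legitimate alternative for that one subcase, but it cannot substitute for the core argument you have not supplied.
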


\begin{proof}
We will construct a vertex set $T$ of size $\tau(\cH) - 1$ which intersects all edges of $\cH$ except for the edges of the form in question. Since $T$ cannot be a vertex cover by virtue of its small size, some such edge must exist. Let $T$ be the union of the sets $Y_1 \cup Y_2 \setminus \lset{y}$, $(V(\cF) \cup V(\cR)) \cap V_j$, and $V(\cR) \cap V_k$, where $k \in \lset{1, 2, 3} \setminus \lset{i, j}$. Since we have taken two vertices from each $F \in \cF$ and two vertices from each $R \in \cR$, and $2\abs{Y_1} - 1$ additional vertices, we get $\abs{T} = 2\abs{\cF \cup \cR} + 2\abs{Y_1} - 1 = 2\nu(\cH_0) + 2\abs{Y_1} - 1 = 2\nu(\cH) - 1 = \tau(\cH) - 1$, hence $T$ is not a vertex cover of $\cH$.

It is clear that $T$ includes a cover of all edges of $\cH_0$, so any uncovered edge must contain $y$ or intersect $X$. It turns out that any edge $e$ intersecting $X$ is also covered by $T$. If $i = 1$, then $e$ is covered by $N_{\link{\cH}{V_1}}(X) = Y_2 \subseteq T$. If $i = 2$, then $j = 1$ and $e$ is covered by $N_{\link{\cH}{V_2}}(X) \subseteq Y_1 \cup (V(\cF) \cup V(\cR)) \cap V_1 \subseteq T$. Therefore, any edge not covered by $T$ must contain $y$ and two vertices of $\cH_0$. The $V_j$-vertex must be a $W$-vertex because $(V(\cF) \cup V(\cR)) \cap V_j \subseteq T$, and the $V_k$-vertex cannot be in $V(\cR)$ because $V(\cR) \cap V_k \subseteq T$.
\end{proof}

\begin{lem} \label{lem:existsyxs}
For every pair $(i, j) \in \lset{(1, 2), (1, 3), (2, 1)}$ we have that for every $y \in Y_i$ there is an edge $ysu$, where $s \in W \cap V_j$ is superfluous, and $u \in V(\cH_0)$. If $(Y_1, Y_2, X)$ is perfectly cromulent, then this holds also for $(i, j) = (2, 3)$.
\end{lem}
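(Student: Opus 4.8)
The plan is to follow the template of the proof of \lemref{lem:existsynrw}: exhibit a set $T\subseteq V(\cH)$ with $\abs{T}=\tau(\cH)-1$, deduce that $T$ is not a vertex cover, and then verify that any edge $e$ of $\cH$ disjoint from $T$ must have the claimed form $ysu$. The single new device is that, in order to force the $V_j$-vertex of the uncovered edge to be \emph{superfluous} rather than merely a $W$-vertex, we assemble $T$ from a \emph{heavy cover} of $\cH_0$ in place of the coarser set $(V(\cF)\cup V(\cR))\cap V_j$ used in \lemref{lem:existsynrw}. Precisely, given the pair $(i,j)$ let $\ell$ be the remaining index, so $\lset{i,j,\ell}=\lset{1,2,3}$, and set
\[
T=\bigl(Y_1\cup Y_2\setminus\lset{y}\bigr)\cup\bigl(\text{the $j$-heavy $(j,i)$-cover of }\cH_0\bigr).
\]
By the preceding proposition the $j$-heavy $(j,i)$-cover is a (minimal) vertex cover of $\cH_0$ of size $2\nu(\cH_0)$, and $\abs{Y_1\cup Y_2\setminus\lset{y}}=2\abs{Y_1}-1$ by~\ref{cromulent:size}; together with~\ref{cromulent:homebase} and $\tau(\cH)=2\nu(\cH)$ this gives $\abs{T}=2\nu(\cH_0)+2\abs{Y_1}-1=\tau(\cH)-1$. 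Thus $T$ is not a vertex cover and we fix an edge $e$ of $\cH$ disjoint from $T$.

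A direct inspection of $T\cap V_m$ for each $m\in\lset{1,2,3}$ yields the following. The $V_j$-part of the $j$-heavy $(j,i)$-cover is exactly $C_j\cup\bigl((V(\cF)\cup V(\cR))\cap V_j\bigr)$, where $C_j$ is the maximal essential set of $B_j$; hence the vertices of $V_j$ missed by $T$ are precisely the superfluous vertices of $W\cap V_j$, together with the set $X$ when $j=3$. The $V_i$-part of $T$ consists of $Y_i\setminus\lset{y}$ and vertices of $V(\cH_0)$ only, so $e\cap V_i\in\lset{y}\cup\bigl(V(\cH_0)\cap V_i\bigr)$; and, because we put the ``extra'' $V(\cR)$-part of the heavy cover into $V_i$ and not into $V_\ell$, we have $T\cap V_\ell=Y_\ell$ whenever $\ell\in\lset{1,2}$, so $e\cap V_\ell\notin Y_1\cup Y_2$ in every case. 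Now: (i) $e\cap V_j$ does not lie in $X$ --- this is vacuous unless $j=3$, and if $j=3$ and $e$ met $X$ then by~\ref{cromulent:neighborhood} we would get $e\cap V_2\in Y_2\subseteq T$ (for $(i,j)=(1,3)$), or by~\ref{cromulent:perfect} we would get $e\cap V_1\in Y_1\subseteq T$ (for $(i,j)=(2,3)$), in either case contradicting that $e$ is disjoint from $T$. So $e\cap V_j$ is a superfluous vertex $s\in W\cap V_j\subseteq V(\cH_0)$. (ii) $e$ does not meet $X$ at all: if it did and $j=2$ then $s=e\cap V_2\in N_{\link{\cH}{V_1}}(X)=Y_2$ by~\ref{cromulent:neighborhood}, impossible since $s\in W$; if $(i,j)=(2,1)$ then $s=e\cap V_1\in N_{\link{\cH}{V_2}}(X)\subseteq Y_1\cup V(\cR)\cup V(\cF)$ by~\ref{cromulent:edges}, again impossible since $s\in W$; and $j=3$ was excluded in~(i). (iii) Consequently $e$ is not an edge of $\cH_0$ --- otherwise it would be covered by the heavy cover, which lies inside $T$ --- so $e$ has a vertex in $Y_1\cup Y_2$; that vertex is not $s$ and, by the remarks above, not $e\cap V_\ell$, hence it is $e\cap V_i\in Y_i$, which forces $e\cap V_i=y$. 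Finally $u:=e\cap V_\ell$ lies in neither $Y_1\cup Y_2$ (its only vertex there is $y$) nor $X$ (as $e$ misses $X$), so $u\in V(\cH_0)$, and $e=ysu$ has the required form.

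This settles the three pairs $(1,2)$, $(1,3)$, $(2,1)$. For a perfectly cromulent triple the remaining pair $(i,j)=(2,3)$ is handled identically, now with~\ref{cromulent:perfect} available in step~(i); alternatively it follows by applying the already-proved case $(1,3)$ to the triple $(Y_2,Y_1,X)$, which is cromulent by Observation~\ref{obs:cromulentsymmetry}.

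The step I expect to be most delicate is~(ii), excluding edges into $X$. For $(i,j)\in\lset{(1,2),(1,3)}$ it drops out of~\ref{cromulent:neighborhood} as soon as the $V_j$-vertex has been pinned down as a $W$-vertex, but for $(i,j)=(2,1)$ it genuinely needs the weak ``edges into $X$'' bound~\ref{cromulent:edges}, and for $(i,j)=(2,3)$ the strong form~\ref{cromulent:perfect}; this asymmetry is exactly why the lemma can assert $(2,3)$ only under perfect cromulence. Everything else amounts to choosing the right heavy-cover variant so that the cardinality count and the membership bookkeeping line up.
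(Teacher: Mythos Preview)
Your proof is correct and follows the same approach as the paper: both build $T$ as $(Y_1\cup Y_2\setminus\{y\})$ together with the $j$-heavy $(j,i)$-cover of $\cH_0$, check that $\abs{T}=\tau(\cH)-1$, and then read off the structure of any uncovered edge. The paper's write-up is terser, simply pointing back to the argument of \lemref{lem:existsynrw} for the claim that the $V_i$-vertex is $y$ and the remaining vertices lie in $V(\cH_0)$, while you spell out the case analysis for excluding $X$ explicitly; your steps (i)--(iii) are exactly the content hidden behind that cross-reference.
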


\begin{proof}
We will construct a vertex set $T$ of size $\tau(\cH) - 1$ which intersects all edges of $\cH$ except for the edges of the form in question. Since $T$ cannot be a vertex cover by virtue of its small size, some such edge must exist. Let $T$ be the union of $Y_1 \cup Y_2 \setminus \lset{y}$ and the $j$-heavy $(j, i)$-cover of $\cH_0$. Since we have taken $\tau(\cH_0)$ vertices from $\cH_0$ and $2\abs{Y_1} - 1$ additional vertices, we get $\abs{T} = 2\abs{\cF \cup \cR} + 2\abs{Y_1} - 1 = \tau(\cH) - 1$ (as calculated before). As in the proof of Lemma~\ref{lem:existsynrw}, the $V_i$-vertex of any uncovered edge must be $y$, and the other vertices are in $V(\cH_0)$. The $V_j$-vertex of an uncovered edge must be a superfluous vertex because besides $(V(\cF) \cup V(\cR)) \cap V_j$, the maximal essential subset $C_j \subseteq W \cap V_j$ of $B_j$ is also included in $T$ (and every $W$-vertex outside of the maximal essential subset is by definition superfluous).
\end{proof}

\begin{lem} \label{lem:yxstoynrs}
For $i = 1$ and $j = 3$ we have that for every $y \in Y_i$, if $yvs$ is an edge of $\cH$ with $v \in V(\cH_0)$ and $s \in V_j$ a superfluous vertex, then there is an edge $yv's$ with $v' \in V(\cH_0) \setminus V(\cR)$. If $(Y_1, Y_2, X)$ is perfectly cromulent, then this holds also for $(i, j) = (2, 3)$.
\end{lem}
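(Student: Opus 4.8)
The plan is to mimic the now-familiar "too-small vertex cover" argument used in the proofs of Lemmas~\ref{lem:existsynrw} and~\ref{lem:existsyxs}, but now starting from the hypothesis that some edge $yvs$ exists with $v \in V(\cH_0)$ and $s \in V_j$ superfluous. Concretely, I would fix such an edge, and build a vertex cover $T$ of $\cH - s$ of size $\tau(\cH) - 1$ that meets every edge of $\cH - s$ except possibly those of the desired form $yv's$ with $v' \in V(\cH_0) \setminus V(\cR)$; since $T$ cannot cover all of $\cH - s$ — here I would invoke that removing a single superfluous vertex from a home-base hypergraph leaves a home-base hypergraph (Observation~\ref{obs:superfluous}) and hence does not change $\nu$, combined with $\tau(\cH) = 2\nu(\cH)$, to conclude $\tau(\cH - s) \geq 2\nu(\cH - s) = 2\nu(\cH) - 2\abs{Y_1}$ ... wait, more precisely $\tau(\cH-s)=2\nu(\cH)$ since $s\in W$ and removing it cannot drop $\nu$ below $\nu(\cH)$ while Ryser still applies — some edge of the stated form must remain uncovered.

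For the construction of $T$: take $T$ to be the union of $Y_1 \cup Y_2 \setminus \lset{y}$ together with a heavy cover of $\cH_0$ adapted so that it no longer needs to handle the vertex $s$. The key point is that $s$ is superfluous in $B_j$, so by Lemma~\ref{lem:superfluousessential} the maximal essential set of $B_j - s$ still coincides (among the non-$s$ vertices) with $C_j$ minus nothing — i.e. removing $s$ does not shrink the essential structure — and thus the $j$-heavy $(j,i)$-cover of $\cH_0 - s$ has the same size $\tau(\cH_0)$ as before. So $\abs{T} = 2\abs{\cF \cup \cR} + 2\abs{Y_1} - 1 = \tau(\cH) - 1$, exactly as in the earlier lemmas. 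Now any edge of $\cH - s$ not covered by $T$ must, as before, have its $V_i$-vertex equal to $y$ (since $Y_1 \cup Y_2 \setminus \lset{y}$ covers everything touching $X$, using condition~\ref{cromulent:neighborhood} for $i = 1$ or condition~\ref{cromulent:edges} for $i = 2$) and its other two vertices in $V(\cH_0)$; moreover its $V_j$-vertex must be $s$ itself (every other $W$-vertex of $V_j$ lies in the essential set or in $V(\cF) \cup V(\cR)$, all inside $T$) — but we have deliberately deleted $s$, so... here I need to be slightly more careful: I should keep $s$ in the picture and instead note that the $V_j$-vertex of an uncovered edge must be a superfluous vertex of $\cH_0$ in $V_j$, and I want to force it to be exactly $s$ by also adding to $T$ all superfluous vertices of $W \cap V_j$ other than $s$; since superfluous vertices come in bounded supply this changes $\abs T$, so instead the cleaner route is to argue directly on $\cH$ (not $\cH-s$): build $T$ as $Y_1 \cup Y_2 \setminus \lset{y}$ union the $j$-heavy $(j,i)$-cover of $\cH_0$ with $s$ swapped out of the maximal-essential-set portion — legal because $s$ is superfluous — forcing any uncovered edge to use $y$, to use $s$ in $V_j$, and to have its $V_k$-vertex outside $V(\cR)$ because $V(\cR) \cap V_k \subseteq T$; that $V_k$-vertex is the desired $v'$.

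The main obstacle I anticipate is the bookkeeping around which "heavy cover" variant to use so that (a) its size is exactly $\tau(\cH_0)$, (b) it covers every edge of $\cH_0$ not touching $s$, and (c) it leaves $s$ — and only $s$ among $V_j$-vertices of $\cH_0$ — potentially uncovered, so that the uncovered edge is forced to be of the form $yv's$. This is where Lemma~\ref{lem:superfluousessential} is essential: it guarantees that demoting $s$ out of the essential portion of the cover does not cost us coverage of any $\cU_j$-home edge except those that meet $s$. The asymmetry in the statement — $(1,3)$ always, $(2,3)$ only in the perfectly cromulent case — will come out exactly as in Lemmas~\ref{lem:existsynrw} and~\ref{lem:existsyxs}: for $i=2$ we need $N_{\link{\cH}{V_2}}(X)$ to be contained in $Y_1$ (rather than merely in $Y_1 \cup V(\cR) \cup V(\cF)$) in order for $Y_1 \cup Y_2 \setminus \lset y$ plus the heavy cover to still catch every $X$-edge with its $V_1$-vertex; under perfect cromulence this is condition~(5*), and the rest of the argument goes through verbatim.
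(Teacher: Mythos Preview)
Your vertex-cover approach has a genuine gap: it cannot single out the \emph{specific} superfluous vertex $s$ given in the hypothesis. The set $T$ you describe --- $Y_1 \cup Y_2 \setminus \lset{y}$ together with any $j$-heavy cover of $\cH_0$ --- has size $\tau(\cH) - 1$, but the conclusion it yields is only that some uncovered edge $yv's'$ exists with $s' \in V_j$ superfluous and $v'$ lying in whatever residual set the heavy cover leaves. There is no mechanism in this construction for forcing $s' = s$. You notice this yourself (``force it to be exactly $s$ by also adding to $T$ all superfluous vertices of $W \cap V_j$ other than $s$; \ldots this changes $\abs{T}$''), and your attempted fix --- ``swap $s$ out of the maximal-essential-set portion'' --- does not make sense: $s$ is superfluous, so by definition $s \notin C_j$, and there is nothing to swap. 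There can be arbitrarily many superfluous vertices in $W \cap V_j$, so no cover of size $\tau(\cH) - 1$ can isolate one of them. (There is a second, smaller issue: the $j$-heavy $(j,i)$-cover puts its secondary part in $V_i$, not in $V_k$, so the claim ``$V(\cR) \cap V_k \subseteq T$'' is also not justified as written.)

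The paper's proof takes a different route, based on matchings rather than covers. Assuming $v \in V(\cR)$, it first invokes Lemma~\ref{lem:existsynrw} (for the partner $y' \in Y_2$) to produce an auxiliary edge $wy'u$ with $w \in W \cap V_1$ and $u \notin V(\cR)$; if $u \neq s$ then $yvs$ and $wy'u$ are disjoint, and Corollary~\ref{cor:monstercor} (Case~\ref{monstercor:rnrnr}) yields a matching in $\cH_0$ avoiding the relevant vertices, which combined with $yvs$, $wy'u$, and the rest of $\cM$ exceeds $\nu(\cH)$. Thus $u = s$. Then Lemma~\ref{lem:existsyxs} (for $y$, with $j = 2$) produces an edge $yv'u'$ with $v'$ superfluous in $V_2$; if $u' \neq s$ the same monster-lemma contradiction (now Case~\ref{monstercor:nenrx}) applies against $wy's$, so $u' = s$ and $yv's$ is the desired edge. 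The key idea you are missing is that pinning down $s$ requires these ``too-large matching'' contradictions, not a ``too-small cover'' count.
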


\begin{proof}
We may assume $v \in V(\cR)$, otherwise we are done. Let $y' \in Y_2$ be the $V_2$-vertex of the edge of $\cM$ containing $y$.

By Lemma~\ref{lem:existsynrw} (with $(i, j) = (2, 1)$ for $y' \in Y_2$), there is an edge $wy'u$ with $w \in W \cap V_1$ and $u \in V(\cH_0) \setminus V(\cR)$. We claim $s = u$.

Suppose not. Then $yvs$ and $wy'u$ are disjoint edges. We can apply Case~\ref{monstercor:rnrnr} of Corollary~\ref{cor:monstercor} with $a = v$, $b = w$, $c = u$, and $S = \lset{s}$ to find a matching of size $\nu(\cH_0)$ in $\cH_0 - \lset{s, u, v, w}$. This matching together with the edges $yvs$, $wy'u$, and the rest of $\cM$ (besides the edge containing $y$ and $y'$) forms a matching of size $\nu(\cH_0) + 2 + \abs{Y_i} - 1 = \nu(\cH) + 1$, a contradiction. Hence $s = u$.

By Lemma~\ref{lem:existsyxs} (with $(i, j) = (1, 2)$ for $y \in Y_1$), there is an edge $yv'u'$ with $v'$ a superfluous vertex in $W \cap V_2$. If $u' \neq s$, then $yv'u'$ and $wy's$ are disjoint edges. We can apply Case~\ref{monstercor:nenrx} of Corollary~\ref{cor:monstercor} with $a = v'$, $b = w$, $c = u'$, and $S = \lset{s}$ to find a matching of size $\nu(\cH_0)$ in $\cH_0 - \lset{s, v', w, u'}$. This matching together with the edges $yv'u'$, $wy's$, and the rest of $\cM$ (besides the edge containing $y$ and $y'$) forms a matching of size $\nu(\cH_0) + 2 + \abs{Y_i} - 1 = \nu(\cH) + 1$, a contradiction.

Therefore $u' = s$, and thus $yv's$ is the edge we are looking for.
\end{proof}

The next lemma is a strengthening of Lemma~\ref{lem:yxstoynrs} in two ways: we can require more of our third vertex, and we can apply it to more combinations of $i$ and $j$.

\begin{lem} \label{lem:yxstoyss}
For $i = 1$ and for every $j \in \lset{2, 3}$ we have that for every $y \in Y_i$, if $yvs$ is an edge of $\cH$ with $v \in V(\cH_0)$ and $s \in V_j$ a superfluous vertex, then there is an edge $ys's$ with $s'$ also superfluous. If $(Y_1, Y_2, X)$ is perfectly cromulent, then this holds also for $i = 2$ and $j \in \lset{1, 3}$.
\end{lem}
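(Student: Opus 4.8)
The plan is to follow the template of \lemref{lem:yxstoynrs}: assuming that no edge $ys's$ with $s'$ superfluous exists, I will assemble a matching of $\cH$ of size $\nu(\cH)+1$ out of $\cM$ with its edge through $y$ deleted, two well-chosen hyperedges supplied by \lemref{lem:existsynrw}, \lemref{lem:existsyxs} or the case $j=3$ of the present lemma, and a matching of $\cH_0$ of size $\nu(\cH_0)$ obtained by deleting a bounded set of vertices from $\cH_0$ through Corollary~\ref{cor:monstercor}; since $\nu(\cH_0)=\nu(\cH)-\abs{Y_1}$ and $\cM$ minus one edge has $\abs{Y_1}-1$ edges, the two extra hyperedges bring the count to $\nu(\cH)+1$, the desired contradiction. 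As with the earlier lemmas of this section it suffices to prove the statement for $i=1$: when $(Y_1,Y_2,X)$ is perfectly cromulent the triple $(Y_2,Y_1,X)$ is again cromulent by Observation~\ref{obs:cromulentsymmetry} (with the same $\cH_0$, $\cM$ and home-base partition), and the $i=1$ statement applied to that triple yields the two $i=2$ cases. We may also assume $v$ is not superfluous, for otherwise $yvs$ is itself the edge sought; write $\{y,y',x\}$ for the edge of $\cM$ through $y$, with $y'\in Y_2$ and $x\in X$.

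For $j=3$ (so $s\in W\cap V_3$ and $v\in V_2$) I would first manufacture an edge $wy's$ with $w\in W\cap V_1$, exactly as in \lemref{lem:yxstoynrs}. If $v\in V(\cR)$, \lemref{lem:existsynrw} with pair $(2,1)$ applied to $y'$ gives an edge $wy'u$ with $w\in W\cap V_1$ and $u\in V(\cH_0)\setminus V(\cR)$; if $u\neq s$ then $yvs$ and $wy'u$ are disjoint and Case~\ref{monstercor:rnrnr} of Corollary~\ref{cor:monstercor} (with $a=v$, $b=w$, $c=u$, $S=\{s\}$) produces a matching of size $\nu(\cH)+1$, so $u=s$. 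If $v\notin V(\cR)$, \lemref{lem:existsyxs} with $(2,1)$ applied to $y'$ gives $y's''u''$ with $s''\in W\cap V_1$ superfluous, and the same argument using Case~\ref{monstercor:nenrx} (with $a=s''$, $b=v$, $c=u''$, $S=\{s\}$) forces $u''=s$, so $y's''s$ is an edge. In either case there is an edge $wy's$ with $w\in W\cap V_1$. Now \lemref{lem:existsyxs} with $(1,2)$ gives $yv'u'$ with $v'\in W\cap V_2$ superfluous; if $u'\neq s$ then $yv'u'$ and $wy's$ are disjoint and Case~\ref{monstercor:nenrx} (with $a=v'$, $b=w$, $c=u'$, $S=\{s\}$) again produces a matching of size $\nu(\cH)+1$, so $u'=s$ and $yv's$ is the edge we want.

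For $j=2$ (so $s\in W\cap V_2$ and $v\in V_3$) the partner vertex $y'$ lies in $V_2$, the same class as $s$, and by condition~\ref{cromulent:neighborhood} the superfluous vertex $s$ lies in no hyperedge with an $X$-vertex; so $s$ can be routed through neither $y'$ nor $x$, and the $j=3$ argument cannot be copied directly. Instead I would bootstrap off the case $j=3$. By \lemref{lem:existsyxs} with $(1,3)$ there is an edge $yw_3s'$ with $s'\in W\cap V_3$ superfluous and $w_3\in V(\cH_0)\cap V_2$; applying the case $j=3$ just proved to this edge (with $s'$ in the role of the superfluous $V_3$-vertex and $w_3$ in the role of $v$) upgrades it to an edge $y\hat ss'$ with $\hat s\in W\cap V_2$ superfluous. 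If $\hat s=s$ we are done, so suppose $\hat s\neq s$. Then \lemref{lem:existsyxs} with $(2,1)$ applied to $y'$ gives $y's''u''$ with $s''\in W\cap V_1$ superfluous and $u''\in V(\cH_0)\cap V_3$; if $s'\neq u''$ then $y\hat ss'$ and $y's''u''$ are disjoint and Case~\ref{monstercor:nenrx} (with $a=s''$, $b=\hat s$, $c=u''$, $S=\{s'\}$) produces a matching of size $\nu(\cH)+1$, so $s'=u''$ and $y's''s'$ is an edge. Since $v$ is not superfluous, $v\neq s'$, so $yvs$ and $y's''s'$ are disjoint and Case~\ref{monstercor:nenrx} (with $a=s''$, $b=s$, $c=v$, $S=\{s'\}$) yields a matching of size $\nu(\cH)+1$. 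This contradiction shows $\hat s=s$, which settles the case $j=2$ and hence the $i=1$ statement.

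The step I expect to demand the most care is the bookkeeping that makes each appeal to Corollary~\ref{cor:monstercor} legitimate. In every such appeal one must check that the two hyperedges being combined really are disjoint — the only possible overlap is in $V_3$, and it is precisely the coincidence one is forcing — and that among the at most four vertices removed from $\cH_0$ exactly one is a superfluous vertex serving as the set $S$, while the other three lie in distinct vertex classes and exhibit the $V(\cF)$/$V(\cR)$/$W$-membership (and essentiality) pattern required by the chosen case. For Case~\ref{monstercor:nenrx} one additionally needs the distinguished $W$-vertex to stay non-essential after $S$ is removed; this holds throughout, since that vertex and the single vertex of $S$ always lie in different vertex classes, so the observation following Corollary~\ref{cor:monstercor} applies.
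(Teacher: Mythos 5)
Your proof is correct and follows essentially the same template as the paper's: the $j=3$ case is resolved by forcing coincidences of third vertices via Corollary~\ref{cor:monstercor}, and the $j=2$ case bootstraps off the $j=3$ case, again via Corollary~\ref{cor:monstercor}. The only differences are organizational: you inline the content of Lemma~\ref{lem:yxstoynrs} (the $v\in V(\cR)$ sub-case of $j=3$) instead of citing it, and you dispose of the superfluous-$v$ case at the outset, which makes the case split on $\hat s=s$ versus $\hat s\neq s$ in $j=2$ redundant --- the contradiction you derive does not actually use $\hat s\neq s$, so it really shows that $v$ must be superfluous (which is the paper's own conclusion in Case~2, phrased as $v=s'$). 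That remark aside, the bookkeeping for every application of Corollary~\ref{cor:monstercor} checks out.
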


\begin{proof}
Let $yvs$ be an edge with $v \in V(\cH_0)$ and $s \in V_j$ superfluous. Let $y' \in Y_2$ be the $V_2$-vertex of the edge of $\cM$ containing $y$. There are two cases.

\noindent \textbf{Case 1}. $i = 1$, $j = 3$.

By Lemma~\ref{lem:yxstoynrs} (with $(i, j) = (1, 3)$), we may assume $v \in V(\cH_0) \setminus V(\cR)$. By Lemma~\ref{lem:existsyxs} (with $(i, j) = (2, 1)$ for $y' \in Y_2$), there is an edge $s''y'u$ with $s'' \in V_i$ a superfluous vertex. If $s \neq u$, then $yvs$ and $s''y'u$ are disjoint edges, and we will reach a contradiction as in the previous lemma. We can apply Case~\ref{monstercor:nenrx} of Corollary~\ref{cor:monstercor} with $a = s''$, $b = v$, $c = u$, and $S = \lset{s}$ to find a matching of size $\nu(\cH_0)$ in $\cH_0 - \lset{s, s'', u, v}$. This matching together with the edges $yvs$, $s''y'u$, and the rest of $\cM$ (besides the edge containing $y$ and $y'$) forms a matching of size $\nu(\cH_0) + 2 + \abs{Y_i} - 1 = \nu(\cH) + 1$, a contradiction.

It follows that $s = u$. Lemma~\ref{lem:existsyxs} (with $(i, j) = (1, 2)$ for $y \in Y_1$) tells us that there is an edge $ys'u'$ with $s' \in V_2$ superfluous. It must be the case that $s = u'$ because otherwise $ys'u'$ and $s''y's$ are disjoint edges, and we would reach a similar contradiction. We can apply Case~\ref{monstercor:nenrx} of Corollary~\ref{cor:monstercor} with $a = s''$, $b = s'$, $c = u'$, and $S = \lset{s}$ to find a matching of size $\nu(\cH_0)$ in $\cH_0 - \lset{s, s', s'', u'}$. This matching together with the edges $ys'u'$, $s''y's$, and the rest of $\cM$ (besides the edge containing $y$ and $y'$) forms a matching of size $\nu(\cH_0) + 2 + \abs{Y_i} - 1 = \nu(\cH) + 1$, a contradiction.

Therefore there is an edge $ys's$, as required.

\noindent \textbf{Case 2}. $i = 1$, $j = 2$.

By Lemma~\ref{lem:existsyxs}) (with $(i, j) = (1, 3)$ for $y \in Y_1$) there is an edge $yr's'$ with $s' \in V_3$ superfluous, and then by Case 1, above, there is an edge $yrs'$ with $r \in V_2$ and $s' \in V_3$ both superfluous. By Lemma~\ref{lem:existsyxs} (with $(i, j) = (2, 1)$ for $y' \in Y_2$), there is an edge $qy'u$ with $q \in V_1$ a superfluous vertex and $u \in V(\cH_0)$. If $u \neq s'$, then we will again reach a contradiction. Suppose $yrs'$ and $qy'u$ are disjoint. We can apply Case~\ref{monstercor:nenrx} of Corollary~\ref{cor:monstercor} with $a = q$, $b = r$, $c = u$, and $S = \lset{s'}$ to find a matching of size $\nu(\cH_0)$ in $\cH_0 - \lset{q, r, s', u}$. This matching together with the edges $yrs'$, $qy'u$, and the rest of $\cM$ (besides the edge containing $y$ and $y'$) forms a matching of size $\nu(\cH_0) + 2 + \abs{Y_i} - 1 = \nu(\cH) + 1$, a contradiction.

Therefore $u = s'$. A similar contradiction is reached by $ysv$ and $qy's'$ if $v \neq s'$, so that cannot be the case either. Suppose $ysv$ and $qy's'$ are disjoint. We can apply Case~\ref{monstercor:nenrx} of Corollary~\ref{cor:monstercor} with $a = q$, $b = s$, $c = v$, and $S = \lset{s'}$ to find a matching of size $\nu(\cH_0)$ in $\cH_0 - \lset{q, s, s', v}$. This matching together with the edges $ysv$, $qy's'$, and the rest of $\cM$ (besides the edge containing $y$ and $y'$) forms a matching of size $\nu(\cH_0) + 2 + \abs{Y_i} - 1 = \nu(\cH) + 1$, a contradiction.

Therefore we have found our edge $yss'$.
\end{proof}

\begin{lem} \label{lem:prince}
Let $y \in Y_1$ and $y' \in Y_2$ be in an edge of $\cM$ together. Then there is a unique superfluous vertex $z_{y,y'} \in V_3$ such that
\begin{enumerate}
	\renewcommand{\theenumi}{(\roman{enumi})}
	\renewcommand{\labelenumi}{\theenumi}
	\item \label{prince:existence} There are edges $yvz_{y,y'}$ and $uy'z_{y,y'}$ for some vertices $u, v \in V(\cH_0)$,
	\item \label{prince:uniqueness} If $yv's'$ or $u'y's'$ is an edge with $s'$ superfluous, then $s' = z_{y,y'}$.
\end{enumerate}
\end{lem}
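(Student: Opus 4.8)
The plan is to pin down $z_{y,y'}$ by exhibiting one edge of the required shape through $y$ and one through $y'$ and forcing their $V_3$-endpoints to coincide, and then to derive uniqueness by showing that any second superfluous $V_3$-endpoint through $y$ or through $y'$ would yield a matching of $\cH$ of size $\nu(\cH)+1$. Every ``matching of size $\nu(\cH)+1$'' below is built the same way: take a maximum matching of $\cH_0$ with four prescribed vertices removed --- which has size $\nu(\cH_0)=\nu(\cH)-\abs{Y_1}$ by the relevant case of Corollary~\ref{cor:monstercor} --- add two edges, one through $y$ and one through $y'$, and add all of $\cM$ except its edge $yy'x$ (here $x\in X$); a routine disjointness check then gives a matching of size $\nu(\cH_0)+2+(\abs{Y_1}-1)=\nu(\cH)+1$.

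\emph{Existence.} Applying \lemref{lem:existsyxs} with $(i,j)=(1,3)$ to $y$ and then \lemref{lem:yxstoyss} with $(i,j)=(1,3)$ yields an edge $ys_1z_1$ with $s_1\in W\cap V_2$ and $z_1\in W\cap V_3$ both superfluous; applying \lemref{lem:existsyxs} with $(i,j)=(2,1)$ to $y'$ yields an edge $s_2y'u_2$ with $s_2\in W\cap V_1$ superfluous and $u_2\in V(\cH_0)\cap V_3$. If $z_1\neq u_2$ these two edges are disjoint, and Case~\ref{monstercor:nenrx} of Corollary~\ref{cor:monstercor} applied to $\cH_0$ with $a=s_2$, $b=s_1$, $c=u_2$, $S=\lset{z_1}$ (valid since $s_2$ is superfluous, hence non-essential in $\cH_0-z_1$, and $s_1\in W$ so $s_1\notin V(\cR)$) gives $\nu(\cH_0-\lset{s_1,s_2,u_2,z_1})=\nu(\cH_0)$, so by the recipe above we obtain a matching of size $\nu(\cH)+1$, a contradiction. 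Hence $z_1=u_2=:z_{y,y'}$, which is superfluous, and (i) holds with $v=s_1$, $u=s_2$.

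\emph{Uniqueness.} Let $s'\neq z_{y,y'}$ be superfluous. If $yv's'$ is an edge with $v'\in V(\cH_0)$, then by \lemref{lem:yxstoyss} (with $(i,j)=(1,3)$) we may take $v'=s''$ superfluous; now $ys''s'$ is disjoint from the existence edge $s_2y'z_{y,y'}$ through $y'$, and Case~\ref{monstercor:nenrx} of Corollary~\ref{cor:monstercor} (with $a=s_2$, $b=s''$, $c=s'$, $S=\lset{z_{y,y'}}$) gives the forbidden matching of size $\nu(\cH)+1$. If instead $u'y's'$ is an edge with $u'\in V(\cH_0)$ we cannot symmetrize this --- for merely cromulent triples there is no ``$(i,j)=(2,3)$'' version of \lemref{lem:yxstoyss} --- so we pivot on the \emph{other} existence edge $ys_1z_{y,y'}$, which is disjoint from $u'y's'$, and delete $\lset{s_1,z_{y,y'},u',s'}$ from $\cH_0$ via Corollary~\ref{cor:monstercor}: Case~\ref{monstercor:fwx} if $u'\in V(\cF)$, Case~\ref{monstercor:rnrnr} if $u'\in V(\cR)$, Case~\ref{monstercor:nenrx} if $u'\in W$ is non-essential, and Case~\ref{monstercor:enenr} if $u'\in W$ is essential (for some $R\in\cR$, using that the superfluous vertex $s_1$ is not essential for $R$). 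Each case gives a matching of size $\nu(\cH)+1$, a contradiction, so $s'=z_{y,y'}$ in all cases.

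\emph{Main obstacle.} The existence step and the $y$-side of uniqueness fall out cleanly once \lemref{lem:existsyxs} and \lemref{lem:yxstoyss} are in hand. The work is concentrated in the $u'y's'$ case: the lack of symmetry between the two non-$X$ coordinates of a merely cromulent triple (Lemmas~\ref{lem:existsynrw}--\ref{lem:yxstoyss} require $i=1$) rules out the symmetric argument, and one is forced to reuse the existence edge through $y$ as a pivot and then dispose of its $V_1$-companion $u'$ by hand according to where $u'$ sits in the partition $(\cF,\cR,W)$ of $\cH_0$. Verifying that the appropriate case of Corollary~\ref{cor:monstercor} genuinely applies after the superfluous deletions --- in particular that deleting a superfluous $V_3$-vertex changes neither essentiality in $V_1$ nor ``essential for $R$'' status in $V_2$ --- together with the repeated disjointness checks, is the bulk of the argument.
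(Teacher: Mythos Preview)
Your proof is correct and follows essentially the same strategy as the paper's: produce an edge through $y$ and one through $y'$ with superfluous $V_3$-endpoints, force these endpoints to coincide via Corollary~\ref{cor:monstercor}, and then derive uniqueness by the same device.

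The one difference worth noting is in the $u'y's'$ case of uniqueness. You identify this as the ``main obstacle'' and handle it by a four-way case split on the location of $u'$ in the partition $(\cF,\cR,W)$. This works, but it is not forced: the paper disposes of this case (and the $y$-case) simultaneously with a \emph{single} application of Case~\ref{monstercor:nenrx}. Concretely, pairing your existence edge $ys_1z_{y,y'}$ against $u'y's'$ and taking $a=s_1$, $b=z_{y,y'}$, $c=u'$, $S=\{s'\}$ already fits Case~\ref{monstercor:nenrx}: $s_1\in V_2$ is superfluous (hence non-essential, and removing $s'\in V_3$ does not change that), $z_{y,y'}\in W$ so $z_{y,y'}\notin V(\cR)$, and $c=u'$ is allowed to be \emph{arbitrary}. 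So the asymmetry you flag---that \lemref{lem:yxstoyss} has no $(i,j)=(2,3)$ version for merely cromulent triples---never bites: you do not need to upgrade $u'$, because the already-superfluous $V_2$-vertex $s_1$ of the $y$-edge can serve as the ``$a$'' in Case~\ref{monstercor:nenrx}. Your argument is valid as written; it just does more work than necessary.
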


\begin{proof}
By Lemma~\ref{lem:existsyxs} (with $(i, j) = (1, 3)$ for $y \in Y_1$) there is an edge $yvs$ with $v \in V(\cH_0)$ and $s \in V_3$ superfluous. We claim that $s$ satisfies~\ref{prince:existence} and~\ref{prince:uniqueness}.

To see~\ref{prince:existence}, we only need to find $uy's$, since we have $yvs$. By Lemma~\ref{lem:yxstoyss} (with $(i, j) = (1, 2)$), we may assume $v$ is superfluous as well. By Lemma~\ref{lem:existsyxs} (with $(i, j) = (2, 1)$ for $y' \in Y_2$), we have an edge $s'y'u'$ with $s' \in W \cap V_1$ superfluous. Suppose $u' \neq s$. Then $yvs$ and $s'y'u'$ are disjoint edges. We can apply Case~\ref{monstercor:nenrx} of Corollary~\ref{cor:monstercor} with $a = v$, $b = s'$, $c = u'$, and $S = \lset{s}$ to find a matching of size $\nu(\cH_0)$ in $\cH_0 - \lset{s, s', u', v}$. This matching together with the edges $yvs$, $s'y'u'$, and the rest of $\cM$ (besides the edge containing $y$ and $y'$) forms a matching of size $\nu(\cH_0) + 2 + \abs{Y_i} - 1 = \nu(\cH) + 1$, a contradiction.

Therefore $u' = s$, and we have the desired edge $s'y's$.

We now show~\ref{prince:uniqueness}. Let $yv's'$ and $u'y's''$ be edges of $\cH$ with $s', s'' \in V_3$ both superfluous vertices. By Lemma~\ref{lem:yxstoyss} (with $(i, j) = (1, 2)$), we may assume $v'$ is superfluous as well. If $s' \neq s''$, then $yv's'$ and $u'y's''$ are disjoint edges. This leads to a contradiction as before. We can apply Case~\ref{monstercor:nenrx} of Corollary~\ref{cor:monstercor} with $a = v'$, $b = s'$, $c = u'$, and $S = \lset{s''}$ to find a matching of size $\nu(\cH_0)$ in $\cH_0 - \lset{s', s'', u', v'}$. This matching together with the edges $yv's'$, $u'y's''$, and the rest of $\cM$ (besides the edge containing $y$ and $y'$) forms a matching of size $\nu(\cH_0) + 2 + \abs{Y_i} - 1 = \nu(\cH) + 1$, a contradiction.

Therefore it must be the case that $s' = s''$, which in particular means that $s' = s'' = s$, since we could have substituted $yvs$ or $uy's$ for $yv's'$ or $u'y's''$, respectively.
\end{proof}

Our aim is to make each set $\lset{y, y', z_{y, y'}}$ into an $R$ for our home-base partition. We will first show that the $z_{y, y'}$'s are all distinct, and then we will make use of Lemma~\ref{prop:matchableedgehome} to show that combining the new $R$'s with the home-base partition of $\cH_0$ forms a home-base partition of $\cH$.

\begin{lem} \label{lem:v3matching}
For each $(y, y')$-pair, the associated $z_{y, y'}$ is distinct, and there is a matching saturating $\cR$ in the subgraph of $B_3$ induced by $\cR \cup (V_3 \cap W \setminus Z)$, where $Z$ is the set of all $z_{y, y'}$'s.
\end{lem}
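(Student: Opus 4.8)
\medskip

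The plan is to prove the two assertions in sequence, using the ``monster'' machinery assembled above together with the basic combinatorics of essential/superfluous vertices. First I would establish that the $z_{y,y'}$ are pairwise distinct. Suppose for contradiction that two distinct pairs $(y_1,y_1')$ and $(y_2,y_2')$ of $\cM$ (with $y_1,y_2\in Y_1$, $y_1',y_2'\in Y_2$) satisfy $z_{y_1,y_1'}=z_{y_2,y_2'}=:z$. By Lemma~\ref{lem:prince}\ref{prince:existence} (combined with Lemma~\ref{lem:yxstoyss} to push the third vertices to superfluous ones) there are edges $y_1 s_1 z$ and $s_1' y_1' z$, and similarly $y_2 s_2 z$ and $s_2' y_2' z$, with all of $s_1,s_2\in W\cap V_2$ and $s_1',s_2'\in W\cap V_1$ superfluous. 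Now $y_1 s_1 z$ and $s_2' y_2' z$ both use $z$, so I instead pair $y_1 s_1 z$ with an edge on $y_2'$ avoiding $z$: by Lemma~\ref{lem:existsyxs} applied to $y_2'\in Y_2$ with $(i,j)=(2,1)$ there is an edge $q y_2' u$ with $q\in W\cap V_1$ superfluous; Lemma~\ref{lem:prince}\ref{prince:uniqueness} forces $u=z_{y_2,y_2'}=z$ only if $u$ is superfluous in $V_3$, which it is, so in fact $u=z$ — which is exactly the obstruction. To circumvent this I would instead use the \emph{two} edges $y_1 s_1 z$ and $y_2 s_2 z$: these are not disjoint (both contain $z$), so the right move is to take $y_1 s_1 z$ together with a $y_2$-edge \emph{not} through $z$. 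Since $z$ is the unique superfluous $V_3$-vertex appearing in any $y_2$-edge with a superfluous partner, any $y_2$-edge avoiding $z$ must have its $V_3$-vertex in the $j$-heavy cover, hence essential; I then run Case~\ref{monstercor:nenrx} (or~\ref{monstercor:rnrnr}) of Corollary~\ref{cor:monstercor} with $S=\{z\}$ to delete the relevant four vertices of $\cH_0$ without dropping $\nu(\cH_0)$, and assemble a matching of size $\nu(\cH_0)+2+(\abs{Y_1}-1)=\nu(\cH)+1$ from these two disjoint edges plus the rest of $\cM$. That contradiction gives distinctness, so $\abs{Z}=\abs{Y_1}=\abs{\cR'}$ where $\cR'$ is the set of new triples.

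\medskip

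For the second assertion I would exhibit the required matching in $B_3[\cR\cup(V_3\cap W\setminus Z)]$ by Hall's theorem. The home-base partition $(\cF,\cR,W)$ of $\cH_0$ already gives a matching $M_3$ saturating $\cR$ in $B_3$; the only way this fails to live in $B_3[\cR\cup(V_3\cap W\setminus Z)]$ is if some $R\in\cR$ is matched by $M_3$ to a vertex of $Z\cap V_3$. Suppose $z=z_{y,y'}\in Z$ lies in $W\cap V_3$ and is matched to $R\in\cR$ in $B_3$. By Lemma~\ref{lem:prince} there are edges $yvz$ and $uy'z$ (with $v,u\in V(\cH_0)$), and since $z$ is superfluous I may delete it from $B_3$ and still have a matching saturating $\cR$ — that is, $\cR\setminus$ the edge at $z$ still satisfies Hall's condition in $B_3-z$. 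But I need more: I need a simultaneous system of distinct representatives for all of $\cR$ inside $W\cap V_3\setminus Z$. Here I would argue that each $z\in Z$ is superfluous in $B_3$ (Lemma~\ref{lem:prince}), hence by Observation~\ref{obs:superfluous} the partition $(\cF,\cR,W\setminus Z)$ — note $Z$ may have more than one vertex, but \emph{all} of $Z$ lies in $V_3$, so Observation~\ref{obs:superfluous} does not directly apply — instead I use Lemma~\ref{lem:essential}/Lemma~\ref{lem:superfluousessential} iteratively: removing one superfluous vertex of $V_3$ keeps $B_3$ having a matching saturating $\cR$ and, by Lemma~\ref{lem:superfluousessential}, does not shrink the maximal essential subset $C_3$, so the remaining vertices of $Z$ are still superfluous in the new graph; iterating over all of $Z$ shows $B_3-Z$ still has a matching saturating $\cR$. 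Since $B_3-Z$ is exactly the subgraph of $B_3$ induced by $\cR\cup(V_3\cap W\setminus Z)$ (no edge of $B_3$ has an endpoint outside $W\cap V_3$), this is the desired matching.

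\medskip

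The main obstacle I anticipate is the distinctness step. The natural ``take one edge from each pair'' construction runs straight into the uniqueness clause of Lemma~\ref{lem:prince}: any $y$-edge or $y'$-edge with a superfluous third vertex is forced through $z_{y,y'}$, so two equal $z$'s make it hard to produce \emph{two disjoint} augmenting edges. The resolution, as sketched, is to feed one of the two edges into Corollary~\ref{cor:monstercor} with $S=\{z\}$ (treating $z$ as the single superfluous vertex to be eaten along with $\{a,b,c\}$), which frees up $\nu(\cH_0)$-many edges in $\cH_0$ avoiding all of $\{a,b,c,z\}$; the other edge, lying on the \emph{other} pair, is disjoint from it because its $V_3$-vertex is $z$ too and its other two vertices are superfluous $W$-vertices in $V_1,V_2$ not touched by the $\cH_0$-matching. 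One must check carefully that the correct case (\ref{monstercor:rnrnr}, \ref{monstercor:enenr}, or \ref{monstercor:nenrx}) of Corollary~\ref{cor:monstercor} is available depending on whether $v,u$ are in $V(\cR)$, essential, or neither — this is where Lemma~\ref{lem:yxstoynrs} and Lemma~\ref{lem:yxstoyss} are used to normalize $v,u$ to superfluous vertices first, after which Case~\ref{monstercor:nenrx} always applies. The second assertion, by contrast, should be essentially bookkeeping once one observes that every $z\in Z$ is superfluous in $B_3$ and that superfluousness in $V_3$ is preserved under deleting other $V_3$-superfluous vertices.
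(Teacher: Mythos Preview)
Your plan has a genuine gap in the second assertion, and the distinctness argument is at best incomplete.

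For the matching in $B_3-Z$: your iteration does not work. You claim that removing one superfluous $z\in Z$ ``does not shrink the maximal essential subset $C_3$, so the remaining vertices of $Z$ are still superfluous.'' But this is a non sequitur: what you need is that $C_3$ does not \emph{grow}, and it can. Concretely, if $\cR=\{R\}$ and $W\cap V_3=\{z_1,z_2\}$ with $R$ adjacent to both, then $C_3=\emptyset$ and both $z_i$ are superfluous; after deleting $z_1$, the vertex $z_2$ becomes essential. Lemma~\ref{lem:superfluousessential} only tells you that vertices already in $C_3$ keep their essential/non-essential status; it says nothing about vertices outside $C_3$ remaining outside.

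For distinctness, your sketch does not produce two disjoint augmenting edges. If $z_{y_1,y_1'}=z_{y_2,y_2'}=z$, then by Lemma~\ref{lem:prince}\ref{prince:uniqueness} every $y_1$-, $y_1'$-, $y_2$-, $y_2'$-edge whose $V_3$-vertex is superfluous passes through $z$; your own resolution says ``its $V_3$-vertex is $z$ too,'' so the two edges you assemble share $z$. If instead you pair an edge on $y_1$ with an edge on $y_2'$ whose $V_3$-vertex is \emph{not} superfluous, you have consumed two $\cM$-pairs and the count gives only $\nu(\cH_0)+2+(\lvert Y_1\rvert-2)=\nu(\cH)$, not $\nu(\cH)+1$. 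There is no evident monster-lemma manoeuvre that gets past this.

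The paper avoids both issues by a single Hall argument. It forms one bipartite graph $K$ on parts $\cR\cup Y_1$ and $W\cap V_3$, with the $\cR$-side inheriting $B_3$ and each $y\in Y_1$ joined only to its $z_{y,y'}$. For any $\cR_0\subseteq\cR$, $Y_0\subseteq Y_1$ it builds an explicit vertex cover of $\cH$ of size $2\nu(\cH)-\bigl(\lvert \cR_0\rvert+\lvert Y_0\rvert\bigr)+\lvert N_K(\cR_0\cup Y_0)\rvert$ (modulo an essential-set correction that cancels), and $\tau(\cH)=2\nu(\cH)$ forces the Hall inequality. A matching in $K$ saturating $\cR\cup Y_1$ then gives distinctness (the $Y_1$-side is matched injectively to the $z_{y,y'}$) and simultaneously a matching of $\cR$ into $W\cap V_3\setminus Z$. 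This vertex-cover counting is the missing idea in your plan.
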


\begin{proof}
Define the bipartite graph $K$ with parts $\cR \cup Y_1$ and $W \cap V_3$, where there is an edge between $R \in \cR$ and $w \in W \cap V_3$ precisely when there is an $R$-edge containing $w$, and there is an edge between $y \in Y_1$ and $w \in W \cap V_3$ precisely when $w = z_{y, y'}$, where $y'$ is the partner of $y$ in the pairing between $Y_1$ and $Y_2$. We claim that $K$ has a matching saturating $\cR \cup Y_1$.

We will apply Hall's theorem, so let $\cR_0 \subseteq \cR$ and $Y_0 \subseteq Y_1$. We construct a vertex cover $T$ of $\cH$. Let $C_3$ be the maximal essential set in the subgraph of $K$ induced by $\cR$ and $W \cap V_3$ (this is the graph $B_3$ associated with $\cH_0$), and let $\cU_3 \subseteq \cR$ be such that $N_K(\cU_3) = C_3$, which exists by the definition of essential. Let $T$ be the union of the sets $(Y_1 \cup Y_2) \setminus Y_0$, $N_K(\cR_0 \cup Y_0)$, $(V(\cR) \cup V(\cF)) \cap V_3$, $C_3$, and $\bigcup_{R \in \cR \setminus (\cU_3 \cup \cR_0)} (R \cap V_1)$. Note the similarities to the $3$-heavy $(3, 1)$-cover of $\cH_0$.

We must show that $T$ is indeed a vertex cover. Let $e \in E(\cH_0)$. Then $e$ is either an $\cF$-edge or an $\cR$-edge. If it is an $\cF$-edge, it is covered by $V(\cF) \cap V_3 \subseteq T$. If it is an $\cR$-edge, then it is covered by $(V(\cF) \cup V(\cR) \cap V_3 \subseteq T$, unless its $V_3$-vertex is in $W$, so assume that is the case. Let $e$ be an $R$-edge. If $R \in \cR_0$, then $e \cap V_3 \in N_K(R) \subseteq T$. If $R \in \cU_3$, then $e \cap V_3 \in C_3 \subseteq T$. If $R \in \cR \setminus (\cU_3 \cup \cR_0)$, then $e \cap V_1 = R \cap V_1 \subseteq T$. This shows that $T$ covers every edge of $\cH_0$. All edges incident to $X$ intersect $Y_2$, so any uncovered edge must be incident to $Y_0$ and two vertices of $\cH_0$. All such edges whose $V_3$-vertex is not superfluous intersect $T$, since $C_3 \cup (V(\cR) \cup V(\cF)) \cap V_3 \subseteq T$. Thus, the only edges we have to worry about are those incident to some $y \in Y_0$ and a superfluous vertex in $V_3$. Then by Lemma~\ref{lem:prince}, the $V_3$-vertices of those edges are the corresponding $z_{y, y'}$, and hence those edges intersect $N_K(Y_0) \subseteq T$. This shows that $T$ is a vertex cover.

We now calculate the size of $T$. By the definition of $T$, we calculate $\abs{T} = \abs{Y_1} + \abs{Y_2} - \abs{Y_0} + \abs{N_K(\cR_0 \cup Y_0)} + 2\abs{\cF} + \abs{\cR} + \abs{C_3} - \abs{C_3 \cap N_K(\cR_0)} + \abs{\cR} - \abs{\cU_3 \cup \cR_0}$. Because it is a vertex cover, we must have $\abs{T} \geq \tau(\cH)$. Since $\nu(\cH) = \nu(\cH_0) + \abs{Y_1}$ by the definition of cromulent triple, and since $\tau(\cH) = 2\nu(\cH)$, we have $\tau(\cH) = 2\nu(\cH_0) + 2\abs{Y_1} = 2\abs{\cF \cup \cR} + \abs{Y_1} + \abs{Y_2}$. Combining this with the fact that $\tau(\cH) \leq \abs{T}$ yields the inequality $\abs{Y_0} + \abs{\cU_3 \cup \cR_0} + \abs{C_3 \cap N_K(\cR_0)} \leq \abs{N_K(\cR_0 \cup Y_0)} + \abs{C_3}$. By the inclusion-exclusion principle we can rewrite this as $\abs{Y_0} + \abs{\cU_3} + \abs{\cR_0} - \abs{\cU_3 \cap \cR_0} + \abs{C_3 \cap N_K(\cR_0)} \leq \abs{N_K(\cR_0 \cup Y_0)} + \abs{C_3}$. Since $C_3 = N_K(\cU_3)$, we clearly have $C_3 \cap N_K(\cR_0) \supseteq N_K(\cU_3 \cap \cR_0)$. Since $B_3$ has a matching saturating $\cR$, by Hall's Theorem, we must have $\abs{\cU_3 \cap \cR_0} \leq \abs{N_K(\cU_3 \cap \cR_0)}$. Combining this with our previous inequality, we then get $\abs{Y_0} + \abs{\cU_3} + \abs{\cR_0} - \abs{\cU_3 \cap \cR_0} + \abs{\cU_3 \cap \cR_0} \leq \abs{N_K(\cR_0 \cup Y_0)} + \abs{C_3}$, which simplifies to $\abs{Y_0} + \abs{\cR_0} \leq \abs{N_K(\cR_0 \cup Y_0)}$, since $\abs{\cU_3} = \abs{C_3}$. This last inequality shows that we can apply Hall's Theorem to find a matching in $K$ saturating $\cR \cup Y_0$, which proves the lemma.
\end{proof}

\begin{lem} \label{lem:v1v2matching}
For $i = 2$, let $K_i$ be the bipartite graph with parts $\cR \cup Y_{3 - i}$ and $W \cap V_i$, where there is an edge between $R \in \cR$ and $w \in W \cap V_i$ precisely when there is an $R$-edge containing $w$, and there is an edge between $y \in Y_{3 - i}$ and $w \in W \cap V_i$ precisely when there is an edge $ywz_{y, y'}$, where $y'$ is the partner of $y$ in the pairing between $Y_1$ and $Y_2$. Then $K_i$ has a matching saturating $\cR \cup Y_{3 - i}$. If $(Y_1, Y_2, X)$ is perfectly cromulent, then this holds also for $i = 1$.
\end{lem}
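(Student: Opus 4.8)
The plan is to prove the case $i=2$ directly (where $Y_{3-i}=Y_1$) and then obtain the case $i=1$ from the $Y_1\leftrightarrow Y_2$ symmetry. Since every subset of $\cR\cup Y_1$ has the form $\cR_0\cup Y_0$ with $\cR_0\subseteq\cR$ and $Y_0\subseteq Y_1$, by Hall's theorem it suffices to verify $\abs{N_{K_2}(\cR_0\cup Y_0)}\geq\abs{\cR_0}+\abs{Y_0}$ for all such $\cR_0,Y_0$. I would first record the crucial fact that $K_2$ restricted to $\cR\times(W\cap V_2)$ is exactly the bipartite graph $B_2$ of the home-base partition $(\cF,\cR,W)$; in particular $B_2$ has a matching saturating $\cR$, and $N_{K_2}(R)=N_{B_2}(R)$ for $R\in\cR$.

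Following the scheme of \lemref{lem:v3matching}, the Hall inequality will be extracted from a suitable vertex cover of $\cH$. Writing $C_2\subseteq W\cap V_2$ for the maximal essential subset of $B_2$, $\cU_2\subseteq\cR$ for a set with $N_{B_2}(\cU_2)=C_2$ and $\abs{\cU_2}=\abs{C_2}$, and $P:=N_{K_2}(\cR_0\cup Y_0)\subseteq W\cap V_2$, I would take
\[
	T=\bigl((Y_1\setminus Y_0)\cup Y_2\bigr)\cup P\cup\bigl((V(\cF)\cup V(\cR))\cap V_2\bigr)\cup C_2\cup\Bigl(\textstyle\bigcup_{R\in\cR\setminus(\cU_2\cup\cR_0)}R\cap V_1\Bigr).
\]
This is the $2$-heavy $(2,1)$-cover of $\cH_0$ with the $V_1$-vertices of the members of $\cR_0$ deleted and $P$ inserted in their place, together with $(Y_1\setminus Y_0)\cup Y_2$ to absorb the vertices outside $\cH_0$. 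The first task is to show $T$ is a vertex cover: edges of $\cH_0$ are caught just as in the proof that the heavy cover covers $\cH_0$, the only difference being that an $\cR_0$-edge meeting $W\cap V_2$ is now caught by $P$ rather than by an $R$-vertex in $V_1$; edges meeting $Y_2$ or $Y_1\setminus Y_0$ are covered trivially; and edges meeting $X$ have their $V_2$-vertex in $Y_2$ by condition~\ref{cromulent:neighborhood}.

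I expect the heart of the argument — and the main obstacle — to be the remaining edges $\{y,v_2,v_3\}$ with $y\in Y_0$ and $v_2\in V_2$, $v_3\in V_3$ both in $V(\cH_0)$: these are covered unless $v_2$ is a superfluous vertex of $W\cap V_2$, and in that case one must show $v_2\in N_{K_2}(y)\subseteq P$. My plan is to feed the edge $\{y,v_2,v_3\}$ into \lemref{lem:yxstoyss} (with $(i,j)=(1,2)$) to produce an edge $\{y,v_2,s'\}$ with $s'\in V_3$ superfluous, and then to apply part~\ref{prince:uniqueness} of \lemref{lem:prince} to force $s'=z_{y,y'}$, so that $\{y,v_2,z_{y,y'}\}\in E(\cH)$ and hence $yv_2\in E(K_2)$; if $v_3$ is already superfluous one applies \lemref{lem:prince} to $\{y,v_2,v_3\}$ directly. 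Once $T$ is a vertex cover, the rest is bookkeeping in the spirit of \lemref{lem:v3matching}: the five sets above are pairwise disjoint except that $P$ and $C_2$ both lie in $W\cap V_2$, whence $\abs{T}=\abs{Y_1}-\abs{Y_0}+\abs{Y_2}+2\abs{\cF}+2\abs{\cR}+\abs{C_2}-\abs{\cU_2\cup\cR_0}+\abs{P}-\abs{P\cap C_2}$; combining $\abs{T}\geq\tau(\cH)=2\nu(\cH)=2\abs{\cF}+2\abs{\cR}+\abs{Y_1}+\abs{Y_2}$ (using condition~\ref{cromulent:homebase}, $\abs{Y_1}=\abs{Y_2}$ and $\nu(\cH_0)=\abs{\cF\cup\cR}$) with inclusion--exclusion on $\abs{\cU_2\cup\cR_0}$ reduces the target to $\abs{P\cap C_2}\geq\abs{\cU_2\cap\cR_0}$, which holds because $N_{K_2}(\cU_2\cap\cR_0)\subseteq C_2\cap P$ while Hall applied to the $\cR$-saturating matching of $B_2$ gives $\abs{N_{B_2}(\cU_2\cap\cR_0)}\geq\abs{\cU_2\cap\cR_0}$.

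Finally, for $i=1$ under the assumption that $(Y_1,Y_2,X)$ is perfectly cromulent, I would invoke Observation~\ref{obs:cromulentsymmetry} to see that $(Y_2,Y_1,X)$ is a cromulent triple (with the same $\cH_0$, the same home-base partition, and the same pairing between $Y_1$ and $Y_2$), and apply the case $i=2$ already proved to it. The auxiliary graph ``$K_2$'' it produces coincides with $K_1$, because the superfluous vertices $z_{y,y'}\in V_3$ of \lemref{lem:prince} are characterised symmetrically in the paired vertices $y,y'$, and the instances of \lemref{lem:yxstoyss} and \lemref{lem:prince} used in the $i=2$ argument for $(Y_2,Y_1,X)$ are exactly the ones guaranteed to hold once $(Y_1,Y_2,X)$ is perfectly cromulent. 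Thus the only genuinely delicate point throughout is the covering verification for the superfluous-$V_2$-vertex edges; the size computation, though fiddly, is a direct transcription of the one in \lemref{lem:v3matching}.
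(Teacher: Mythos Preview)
Your proposal is correct and follows essentially the same route as the paper: verify Hall's condition for $K_i$ by building a vertex cover of $\cH$ out of a modified $i$-heavy cover of $\cH_0$ together with $(Y_1\cup Y_2)\setminus Y_0$ and $N_{K_i}(\cR_0\cup Y_0)$, use Lemmas~\ref{lem:yxstoyss} and~\ref{lem:prince} exactly as you describe to handle edges through $Y_0$ with a superfluous $V_i$-vertex, and extract the Hall inequality from $|T|\geq\tau(\cH)$ via the same inclusion--exclusion bookkeeping. The only cosmetic differences are that the paper bases $T$ on the $i$-heavy $(i,3)$-cover (taking $R\cap V_3$ for $R\in\cR\setminus(\cU_i\cup\cR_0)$) rather than your $(2,1)$-cover, and that the paper treats $i=1$ and $i=2$ in one pass rather than deducing $i=1$ from $i=2$ via Observation~\ref{obs:cromulentsymmetry}; neither affects the argument.
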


\begin{proof}
We will apply Hall's theorem, so let $\cR_0 \subseteq \cR$ and $Y_0 \subseteq Y_{3 - i}$. We construct a vertex cover $T$ of $\cH$. Let $C_i$ be the maximal essential set in the subgraph of $K_i$ induced by $\cR$ and $W \cap V_i$ (this is the graph $B_i$ associated with $\cH_0$), and let $\cU_i \subseteq \cR$ be such that $N_{K_i}(\cU_i) = C_i$, which exists by the definition of essential. Let $T$ be the union of the sets $(Y_1 \cup Y_2) \setminus Y_0$, $N_{K_i}(\cR_0 \cup Y_0)$, $(V(\cR) \cup V(\cF)) \cap V_i$, $C_i$, and $\bigcup_{R \in \cR \setminus (\cU_i \cup \cR_0)} (R \cap V_3)$. Note the similarities to the $i$-heavy $(i, 3)$-cover of $\cH_0$.

We must show that $T$ is indeed a vertex cover. Let $e \in E(\cH_0)$. Then $e$ is either an $\cF$-edge or an $\cR$-edge. If it is an $\cF$-edge, it is covered by $V(\cF) \cap V_i \subseteq T$. If it is an $\cR$-edge, then it is covered by $(V(\cF) \cup V(\cR) \cap V_i \subseteq T$, unless its $V_i$-vertex is in $W$, so assume that is the case. Let $e$ be an $R$-edge. If $R \in \cR_0$, then $e \cap V_i \in N_K(R) \subseteq T$. If $R \in \cU_i$, then $e \cap V_i \in C_i \subseteq T$. If $R \in \cR \setminus (\cU_i \cup \cR_0)$, then $e \cap V_3 = R \cap V_3 \subseteq T$. This shows that $T$ covers every edge of $\cH_0$. All edges incident to $X$ intersect $Y_2$, which if $i = 2$ is part of $T$, and if $i = 1$, then $(Y_1, Y_2, X)$ is assumed to be perfectly cromulent, in which case all edges incident to $X$ are incident to $Y_1 \subseteq T$. Therefore, any uncovered edge must be incident to $Y_0$ and two vertices of $\cH_0$. All such edges whose $V_3$-vertex is not superfluous intersect $T$, since $C_i \cup (V(\cR) \cup V(\cF)) \cap V_i \subseteq T$. Thus, the only edges we have to worry about are those incident to some $y \in Y_0$ and a superfluous vertex $s \in V_i$. By Lemma~\ref{lem:yxstoyss} (with $(i, j) = (3 - i, i)$), there is an edge containing $y$ and $s$, whose $V_3$-vertex is also superfluous. By Lemma~\ref{lem:prince}, the $V_3$-vertices of those edges are the corresponding $z_{y, y'}$, and hence their $V_2$-vertices are in $N_{K_i}(Y_0) \subseteq T$ by the definition of $K_i$. This shows that $T$ is a vertex cover.

We now calculate the size of $T$. By the definition of $T$, we calculate $\abs{T} = \abs{Y_1} + \abs{Y_2} - \abs{Y_0} + \abs{N_{K_i}(\cR_0 \cup Y_0)} + 2\abs{\cF} + \abs{\cR} + \abs{C_i} - \abs{C_i \cap N_{K_i}(\cR_0)} + \abs{\cR} - \abs{\cU_i \cup \cR_0}$. Because it is a vertex cover, we must have $\abs{T} \geq \tau(\cH)$. Since $\nu(\cH) = \nu(\cH_0) + \abs{Y_1}$ by the definition of cromulent triple, and since $\tau(\cH) = 2\nu(\cH)$, we have $\tau(\cH) = 2\nu(\cH_0) + 2\abs{Y_1} = 2\abs{\cF \cup \cR} + \abs{Y_1} + \abs{Y_2}$. Combining this with the fact that $\tau(\cH) \leq \abs{T}$ yields the inequality $\abs{Y_0} + \abs{\cU_i \cup \cR_0} + \abs{C_i \cap N_{K_i}(\cR_0)} \leq \abs{N_{K_i}(\cR_0 \cup Y_0)} + \abs{C_i}$. By the inclusion-exclusion principle we can rewrite this as $\abs{Y_0} + \abs{\cU_i} + \abs{\cR_0} - \abs{\cU_i \cap \cR_0} + \abs{C_i \cap N_{K_i}(\cR_0)} \leq \abs{N_{K_i}(\cR_0 \cup Y_0)} + \abs{C_i}$. Since $C_i = N_{K_i}(\cU_i)$, we clearly have $C_i \cap N_{K_i}(\cR_0) \supseteq N_{K_i}(\cU_i \cap \cR_0)$. Since $B_i$ has a matching saturating $\cR$, by Hall's Theorem, we must have $\abs{\cU_i \cap \cR_0} \leq \abs{N_{K_i}(\cU_i \cap \cR_0)}$. Combining this with our previous inequality, we then get $\abs{Y_0} + \abs{\cU_i} + \abs{\cR_0} - \abs{\cU_i \cap \cR_0} + \abs{\cU_i \cap \cR_0} \leq \abs{N_{K_i}(\cR_0 \cup Y_0)} + \abs{C_i}$, which simplifies to $\abs{Y_0} + \abs{\cR_0} \leq \abs{N_{K_i}(\cR_0 \cup Y_0)}$, since $\abs{\cU_i} = \abs{C_i}$. This last inequality shows that we can apply Hall's Theorem to find a matching in $K_i$ saturating $\cR \cup Y_0$, which proves the lemma.
\end{proof}

\subsection{The Proof of Corollary~\ref{cor:cromulenttohbh}}

It suffices to prove Lemmas~\ref{lem:induction} and~\ref{lem:perfectlycromulent}.

\begin{proof}[Proof of Lemma~\ref{lem:induction}]
Let $(Y_1, Y_2, X)$ be a perfectly cromulent triple. We set $\cR' = \cR \cup \set{\lset{y, y', z_{y, y'}}}{y \in Y_1, y' \in Y_2 \mbox{ in an edge of } \cM \mbox{ together with } y}$, and $W' = W \cup X \setminus \set{z_{y, y'}}{y \in Y_1, y' \in Y_2 \mbox{ in an edge of } \cM \mbox{ together with } y}$, where $z_{y,y'}$ is the superfluous vertex in $V_3$ from Lemma~\ref{lem:prince}. By applying Lemma~\ref{lem:v3matching}, we find that $(\cF, \cR', W')$ is an FR-partition, since $\nu(\cH) = \nu(\cH_0) + \abs{Y_1} = \abs{\cF \cup \cR} + \abs{Y_1} = \abs{\cF \cup \cR'}$. Applying \ref{lem:v1v2matching} for $i = 1, 2$ we get that $(\cF, \cR', W')$ has a matching in $B_1'$ and $B_2'$. We can combine the partial matching in $B_3'$ that we get from Lemma~\ref{lem:v3matching} with the edges of $\cM$ going to $X$ to complete it. Thus $(\cF, \cR', W')$ is a matchable FR-partition. It is clearly also proper, because there are no edges with three vertices in $W'$ by virtue of the fact that no such edge is in $\cH_0$ and all edges going to $X$ have their other vertices in $Y_1$ and $Y_2$. Thus, by Proposition~\ref{prop:matchableedgehome}, we in fact have a home-base partition.
\end{proof}

\begin{proof}[Proof of Lemma~\ref{lem:perfectlycromulent}]
Let $(Y_1, Y_2, X)$ be a cromulent triple. We now mean to rule out the possibility that any edge incident to $X$ is also incident to an $\cF$- or $\cR$-vertex of $\cH_0$. Lemma~\ref{lem:v1v2matching} means that we can find a hypergraph matching $\cM'$ of size $\abs{Y_1}$ in $\cH$ consisting of edges of the form $yss'$ with $y \in Y_1$, and $s, s'$ superfluous vertices in $\cH_0$. Suppose there were an edge $uy'x$ for some $u \in (V(\cF) \cup V(\cR)) \cap V_1$, $y' \in Y_2$, and $x \in X$. By the matchability of $B_1$, we can choose a matching of WRR-edges for each $R \in \cR$, which avoids $u$, since $u \notin W$. We can also clearly find a matching of $\cF$-edges avoiding $u$. Combining these matchings with $\cM'$ yields a hypergraph matching of size $\nu(\cH)$ which is disjoint from $uy'x$. This is impossible, so such an edge cannot exist. Therefore $(Y_1, Y_2, X)$ is a perfectly cromulent triple.
\end{proof}

Therefore, we have shown that if we have a cromulent triple, we have a home-base hypergraph. The next section is devoted to finding cromulent triples under various assumptions.

\section{Searching for Cromulent Triples} \label{sec:searchingforcromulenttriples}

Let $\cH$ be a $3$-partite $3$-graph with vertex classes $V_1$, $V_2$, and $V_3$, and with $\tau(\cH) = 2\nu(\cH)$. We want to find a home-base partition of $\cH$. By Corollary~\ref{cor:cromulenttohbh}, we are done if $\cH$ has a cromulent triple. Therefore, our goal will be to find a cromulent triple inside our hypergraph. We will do this under a few assumptions, and we will later show that if all of these assumptions fail to hold, then we can prove $\cH$ is a home-base hypergraph even without cromulent triples.

Finding cromulent triples will entail finding a subgraph which is a home-base hypergraph. We do this by finding a subgraph which is tight for Ryser's Conjecture and has a smaller matching number than $\cH$, and then applying induction on Theorem~\ref{thm:characterization}. We would like to pinpoint exactly where in the proof we need to rely on induction. Therefore, we lay out the induction hypothesis here precisely.

\begin{ih}[IH($k$)]
If $\cH$ is a $3$-partite $3$-graph with $\nu(\cH) \leq k$ and $\tau(\cH) = 2\nu(\cH)$, then $\cH$ is a home-base hypergraph.
\end{ih}

The first assumption under which we will find a cromulent triple is if we have a good set (see Definition~\ref{def:good}).

\subsection{Good Subsets Lead to Cromulent Triples}

\begin{lem} \label{lem:goodtocromulent}
Suppose IH($k - 1$) holds. Let $\cH$ be a $3$-partite $3$-graph with vertex classes $V_1$, $V_2$, and $V_3$ such that $\tau(\cH) = 2\nu(\cH) = 2k$. If $X \subseteq V_3$ is a good set for $\link{\cH}{V_1}$, then the triple $(Y_1, Y_2, X)$ is perfectly cromulent, where $Y_1 = N_{\link{\cH}{V_2}}(X)$ and $Y_2 = N_{\link{\cH}{V_1}}(X)$.
\end{lem}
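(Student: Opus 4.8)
The plan is to verify the five conditions (including the strengthened (5*)) in the definition of a perfectly cromulent triple directly, using the structural information a good set provides together with Theorem~\ref{thm:connoflink}, Lemma~\ref{lem:goodsets}-style facts, and the induction hypothesis IH($k-1$). Write $G = \link{\cH}{V_1}$, so $G$ is the bipartite graph on $V_2 \cup V_3$, and by Theorem~\ref{thm:connoflink} we have $\nu(G) = \tau(\cH) = 2k$ and $\conn(L(G)) = k-2$. Since $X \subseteq V_3$ is good, it is decent: $\abs{N_G(X)} \le \abs{X}$, $\nu(G) = \abs{N_G(X)} + \abs{V_3 \setminus X}$, and every edge of $G$ between $X$ and $N_G(X) = Y_2$ lies in a maximum matching. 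The first easy observations: $Y_2 = N_G(X)$ by definition (this is~\ref{cromulent:neighborhood}), and $Y_1 = N_{\link{\cH}{V_2}}(X)$ is exactly the set of $V_1$-vertices that appear in a hyperedge together with some vertex of $X$; these sets are nonempty because $X$ is (and a good set sends edges to every $y \in Y_2$, and each such edge of $G$ came from a hyperedge also hitting $V_1$).

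For the size condition~\ref{cromulent:size} and the matching condition~\ref{cromulent:matching}: the decency of $X$ gives $\abs{Y_2} \le \abs{X}$, and I would show $\abs{Y_1} = \abs{Y_2}$ by a Hall-type / König argument. Decency condition~\ref{decent:matchings} says every $X$--$Y_2$ edge of $G$ is in a maximum matching of $G$; combined with $\abs{Y_2} \le \abs{X}$ and the defect formula~\ref{decent:nu}, $G[X \cup Y_2]$ has a matching saturating $Y_2$ and in fact $X$ and $Y_2$ are "tightly" matched — every vertex of $Y_2$ is matched into $X$ in every maximum matching, so $N_G(Y_2) \subseteq X$... no, rather $Y_2$ is essential. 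The key point is that the hyperedges realizing the $G$-edges between $X$ and $Y_2$, restricted to their $V_1$-coordinates, must saturate $Y_1$ by a counting/König argument on $\link{\cH}{V_2}$; pushing this through should give $\abs{Y_1} = \abs{Y_2}$ and simultaneously a hypergraph matching in $\cH|_{Y_1 \cup Y_2 \cup X}$ of size $\abs{Y_1}$, namely~\ref{cromulent:matching}. I'd want to be careful here to extract one hyperedge per matched pair with distinct $V_1$-vertices.

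For~\ref{cromulent:homebase}: set $\cH_0 = \cH - (Y_1 \cup Y_2 \cup X)$. I must show $\tau(\cH_0) = 2\nu(\cH_0)$ and $\nu(\cH_0) = k - \abs{Y_1}$, then invoke IH($k-1$) (valid since $\abs{Y_1} \ge 1$ forces $\nu(\cH_0) \le k-1$). The lower bound $\nu(\cH_0) \ge k - \abs{Y_1}$ is clear from deleting the matching~\ref{cromulent:matching} from a maximum matching of $\cH$; the content is that removing these vertices drops $\tau$ by exactly $2\abs{Y_1}$, i.e. $\tau(\cH_0) \ge 2k - 2\abs{Y_1}$. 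This is where the \emph{goodness} (as opposed to mere decency) of $X$ is essential: the defining property of a good set is that deleting the edges of $G$ from $Y_2$ to $V_3 \setminus X$ strictly increases $\conn(L(G))$; translating via Proposition~\ref{prop:linkconn}/Theorem~\ref{thm:connoflink} between connectedness of line graphs and $\tau$ of the hypergraph, this forces that the vertices $Y_1 \cup Y_2 \cup X$ carry "$2\abs{Y_1}$ worth" of covering, so that $\tau(\cH_0) = \tau(\cH) - 2\abs{Y_1}$. Combined with Ryser's theorem for $3$-graphs ($\tau(\cH_0) \le 2\nu(\cH_0)$) and the matching-number bound, equality $\tau(\cH_0) = 2\nu(\cH_0) = 2(k - \abs{Y_1})$ follows. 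I expect \textbf{this step — extracting $\tau(\cH_0) = \tau(\cH) - 2|Y_1|$ from the good-set hypothesis via the connectedness machinery — to be the main obstacle}, since it is the only place the full strength of "good" rather than "decent" is used and it requires carefully relating the line-graph connectedness of $G$ minus some edges to the structure of $\cH_0$.

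Finally, for the strengthened condition (5*), $N_{\link{\cH}{V_j}}(X) = Y_1$: by definition of $Y_1$ this neighborhood is contained in $Y_1$, and conversely every vertex of $Y_1$ has, by construction, a hyperedge with $X$; so~(5*) holds essentially by how $Y_1$ was defined, which is the payoff of working with a genuine good set — we get perfect cromulence for free rather than having to first prove cromulence and then invoke Lemma~\ref{lem:perfectlycromulent}. I would assemble the argument by first recording the translation lemmas between $\conn$ and $\tau$, then verifying~\ref{cromulent:neighborhood} and (5*) (immediate), then~\ref{cromulent:size} and~\ref{cromulent:matching} (Hall/König on the two link graphs), then~\ref{cromulent:homebase} (the connectedness computation plus IH($k-1$)), and noting that~\ref{cromulent:edges} is subsumed by (5*).
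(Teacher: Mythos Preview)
Your identification of conditions~\ref{cromulent:neighborhood} and~(5*) as immediate from the definitions of $Y_1$ and $Y_2$ is correct, and your plan for condition~\ref{cromulent:homebase} is essentially right once conditions~\ref{cromulent:size} and~\ref{cromulent:matching} are in hand (though note your inequality ``$\nu(\cH_0) \ge k - |Y_1|$ is clear from deleting the matching'' is backwards: adding the matching from~\ref{cromulent:matching} to a matching of $\cH_0$ gives the \emph{upper} bound $\nu(\cH_0) \le k - |Y_1|$; the lower bound comes from $\tau(\cH_0) \ge \tau(\cH) - 2|Y_1|$, which only needs that $X$ becomes isolated after removing $Y_1 \cup Y_2$, not goodness).

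The genuine gap is in conditions~\ref{cromulent:size} and~\ref{cromulent:matching}, and you have the location of the difficulty inverted. Decency alone does \emph{not} bound $|Y_1|$: for instance if $X = \{x\}$, $Y_2 = \{y\}$, nothing stops many hyperedges $a_1yx, a_2yx, \ldots$ from existing, making $|Y_1|$ arbitrarily large. Your ``Hall/K\"onig on $\link{\cH}{V_2}$'' sketch never produces an inequality of the form $|Y_1| \le |Y_2|$, and the phrase ``the $V_1$-coordinates must saturate $Y_1$'' is just the definition of $Y_1$. This is precisely where the paper invokes the \emph{good} (not merely decent) hypothesis. For each $y \in Y_2$ it passes to $\cH_y$ (deleting hyperedges inducing $(y, V_3\setminus X)$-edges), uses goodness to get $\conn(L(\link{\cH_y}{V_1})) \ge \nu(\cH) - 1$, and then applies Proposition~\ref{prop:linkconn} to extract a nonempty set $A_y \subseteq V_1$ of vertices $a$ with $\conn(L(\link{\cH_y}{V_1 \setminus \{a\}})) \le \nu(\cH) - 3$. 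A structural analysis of maximum matchings in $\link{\cH}{V_1 \setminus \{a\}}$ then shows the $A_y$ are pairwise disjoint (so $|\bigcup_y A_y| \ge |Y_2|$) and that every maximum $(X,Y_2)$-matching contains, for each $a \in \bigcup_y A_y$, an edge extending only to $a$ (so $|\bigcup_y A_y| \le |Y_2|$). Finally, decency condition~\ref{decent:matchings} is used to show $Y_1 = \bigcup_y A_y$, which simultaneously gives $|Y_1| = |Y_2|$ and the hypergraph matching for~\ref{cromulent:matching}. Your proposal contains no analogue of this mechanism; the step you flagged as the ``main obstacle'' (condition~\ref{cromulent:homebase}) is in fact the routine part.
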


\begin{proof}
Let $X \subseteq V_3$ be a good set, and let $Y_2 = N_{\link{\cH}{V_1}}(X)$. Let $y \in Y_2$, and let $\cH_y = \cH - \set{vyz \in E(\cH)}{v \in V_1, z \in V_3 \setminus X}$. Since the deleted edges can be covered by one vertex ($y$), we clearly have $\tau(\cH_y) \geq \tau(\cH) - 1$, and of course $\nu(\cH_y) \leq \nu(\cH)$ as $\cH_y \subseteq \cH$. It is easy to see that $\link{\cH_y}{V_1} = \link{\cH}{V_1} - \set{yz \in E(\link{\cH}{V_1})}{z \in V_3 \setminus X}$. Therefore, because $X$ is good, we have $\conn(L(\link{\cH_y}{V_1})) \geq \conn(L(\link{\cH}{V_1})) + 1$. Recall that by Theorem~\ref{thm:connoflink}, we have $\conn(L(\link{\cH}{V_1})) = \nu(\cH) - 2$. Thus, we in fact have $\conn(L(\link{\cH_y}{V_1})) \geq \nu(\cH) - 1$. By Proposition~\ref{prop:linkconn}, there is a subset $S \subseteq V_1$ for which we have $\conn(L(\link{\cH_y}{S})) \leq \nu(\cH_y) - (\abs{V_1} - \abs{S}) - 2$ and $\abs{S} \geq \abs{V_1} - (2\nu(\cH_y) - \tau(\cH_y))$.
(Note that $|V_1| \geq 2$, so Proposition~\ref{prop:linkconn} part (ii) applies.)
 Plugging in the inequalities for $\tau$ and $\nu$, we get
\[
	\conn(L(\link{\cH_y}{S})) \leq \nu(\cH) - (\abs{V_1} - \abs{S}) - 2
\]
and
\[
	\abs{S} \geq \abs{V_1} - (2\nu(\cH) - \tau(\cH) + 1) = \abs{V_1} - 1
\]
since $\tau(\cH) = 2\nu(\cH)$.

We have seen that $V_1$ itself does not fulfil the first of these inequalities, so $S$ must be a proper subset of $V_1$, and thus by the second inequality, $S = V_1 \setminus \lset{a}$ for some $a \in V_1$. A priori, we do not know if this $a$ is unique for each $y \in Y_2$, so denote by $A_y$ the set of all $V_1$-vertices $a$ for which $\conn(L(\link{\cH_y}{V_1 \setminus \lset{a}})) \leq \nu(\cH) - 3$.

Let $a \in A_y$ and let $S = V_1 \setminus \lset{a}$. By Theorem~\ref{thm:matchconn}, we have $\nu(\link{\cH_y}{S}) \leq 2\conn(L(\link{\cH_y}{S})) + 4 \leq 2\nu(\cH) - 2 = \tau(\cH) - 2$, which implies that $\nu(\link{\cH}{S}) \leq \tau(\cH) - 1$ because at most one edge of each maximum matching has been erased when passing from $\cH$ to $\cH_y$ in the link of $S$. We must have $\tau(\cH_y) = \tau(\cH) - 1$ because if $\tau(\cH_y) = \tau(\cH)$, then by inequality~\ref{linkconn:lower} of Proposition~\ref{prop:linkconn}, we would have $\conn(L(\link{\cH_y}{S})) \geq \tau(\cH_y)/2 - 2$ (since $\conn(L(\link{\cH_y}{S}))$ is an integer and $\tau(\cH_y) = \tau(\cH)$ is even), which is a contradiction. We can in fact show $\nu(\link{\cH}{S}) = \tau(\cH) - 1$, from which $\nu(\link{\cH_y}{S}) = \tau(\cH) - 2$ then follows, by considering the vertex cover $T_S$ of $\cH$ consisting of $a$ and a minimum vertex cover of $\link{\cH}{S}$ (which, by K\"onig's Theorem, has size $\nu(\link{\cH}{S})$).

This means that every maximum matching in $\link{\cH}{S}$ must contain an edge which is not in $\link{\cH_y}{S}$. Set $Z = V_3 \setminus X$ and $W = V_2 \setminus Y_2$. We get the following structure for the maximum matchings:

\begin{claim}
For every $y \in Y_2$ and for every $a \in A_y$ every maximum matching in $\link{\cH}{V_1 \setminus \lset{a}}$ contains an edge $yz$ for some $z \in Z$, and then saturates $Y_2 \setminus \lset{y}$ using $(X, Y_2)$-edges and saturates $Z \setminus \lset{z}$ using $(Z, W)$-edges.
\end{claim}

\begin{proof}
Let $S = V_1 \setminus \lset{a}$. As observed, every maximum matching in $\link{\cH}{S}$ contains an edge from $y$ to $Z$. Since $X$ is good (hence decent), it satisfies property~\ref{decent:nu} of Definition~\ref{def:decent}, so $\nu(\link{\cH}{V_1}) = \abs{Y_2} + \abs{Z}$. Then because there are no edges between $X$ and $W$, it follows that every maximum matching in $\link{\cH}{V_1}$ saturates $Y_2$ with edges incident to $X$ and saturates $Z$ with edges incident to $W$. Since $\nu(\link{\cH}{S}) = \tau(\cH) - 1 = \nu(\link{\cH}{V_1}) - 1$, we cannot have more than one matching edge between $Y_2$ and $Z$. Thus the claim follows.
\end{proof}

This structure immediately implies that the sets $A_y$ are pairwise disjoint.

\begin{claim}
If $y, y' \in Y_2$ with $y \neq y'$, then $A_y \cap A_{y'} = \emptyset$.
\end{claim}

\begin{proof}
Let $a \in A_y$, and let $S = V_1 \setminus \lset{a}$. Then we know that a maximum matching in $\link{\cH}{S}$ contains a $(y, Z)$-edge and the rest of its edges are between $X$ and $Y_2$ and between $Z$ and $W$. Thus the only edge between $Y_2$ and $Z$ in the matching is incident to $y$. For $a' \in A_{y'}$, the structure of the maximum matchings in $\link{\cH}{V_1 \setminus \lset{a'}}$ is different, and thus $a \neq a'$, hence the sets $A_y$ and $A_y'$ must be disjoint.
\end{proof}

Since every $A_y$ is non-empty, we thus clearly have $\abs{\bigcup_{y \in Y_2} A_y} \geq \abs{Y_2}$.

\begin{claim}
For every $a \in \bigcup_{y \in Y_2} A_y$, every maximum $(X, Y_2)$-matching in $\link{\cH}{V_1}$ must have one edge which extends only to $a$.
\end{claim}

\begin{proof}
Suppose there were a maximum $(X, Y_2)$-matching $M'$ in $\link{\cH}{V_1}$ in which every edge extended to an element of $S = V_1 \setminus \lset{a}$. Then we could take a maximum $(V_2, V_3)$-matching in $\link{\cH}{S}$ (which must contain a $(y, Z)$-edge) and replace the part of the matching which hits $Y_2$ with $M'$. Because $X$ has no neighbors outside of $Y_2$, this modified matching is a matching and is at least as big as the original one and therefore also maximum. This does not use a $(y, Z)$-edge, so we have a contradiction. Thus $M'$ must contain an edge which does not extend to $S$, and hence extends only to $a$.
\end{proof}

From this claim, we see that $\abs{\bigcup_{y \in Y_2} A_y} = \abs{Y_2}$, since there can be at most as many vertices in $\bigcup_{y \in Y_2} A_y$ as edges in a maximum $(X, Y_2)$-matching in $\link{\cH}{V_1}$, of which there are precisely $\abs{Y_2}$.

\begin{claim}
$Y_1 = \bigcup_{y \in Y_2} A_y$ and there is a hypergraph matching in $\cH_{Y_1 \cup Y_2 \cup X}$ saturating $Y_1$ and $Y_2$.
\end{claim}

\begin{proof}
We clearly have $Y_1 \supseteq \bigcup_{y \in Y_2} A_y$ by the previous claim. We will show the other inclusion as well. Consider any vertex $x \in Y_1$. It follows from the definitions of $Y_1$ and $Y_2$ that there is an $(X, Y_2)$-edge $e$ in $\link{\cH}{V_1}$ such that $e \cup \lset{x} \in E(\cH)$. Since $X$ is good, $e$ appears in a maximum matching $M$. For every $y \in Y_2$ and every $a \in A_y$, one edge of the matching between $X$ and $Y_2$ must extend to $a$ (recall that to be maximum, $M$ must saturate $Y_2$ using $(Y_2, X)$-edges and must saturate $Z$ using $(Z, W)$-edges). Since the $A_y$'s are all disjoint, the matching extends to a hypergraph matching saturating $Y_2$ and $\bigcup_{y \in Y_2} A_y$. Since $e$ extends to $\bigcup_{y \in Y_2} A_y$, it follows that $x \in \bigcup_{y \in Y_2} A_y$ and hence $Y_1 = \bigcup_{y \in Y_2} A_y$. This proves the claim.
\end{proof}

Now we almost have that $(Y_1, Y_2, X)$ is perfectly cromulent. We just need to show that $\cH_0 = \cH \setminus (Y_1 \cup Y_2 \cup X)$ is a home-base hypergraph with $\nu(\cH_0) = \nu(\cH) - \abs{Y_1}$.

Consider the graph $\cH_1 = \cH \setminus (Y_1 \cup Y_2)$. Since we have removed only $2\abs{Y_1}$ vertices from $\cH$, it follows that $\tau(\cH_1) \geq \tau(\cH) - 2\abs{Y_1}$. We must have $\nu(\cH_1) \leq \nu(\cH) - \abs{Y_1}$ because to any matching in $\cH_1$, we may add the matching of size $\abs{Y_1}$ we just showed exists to it to produce a matching in $\cH$ (because no matching edge in the original matching is incident to $Y_1 \cup Y_2 \cup X$). Because $\tau(\cH_1) \leq 2\nu(\cH_1)$, we must have equality in both cases, whence $\tau(\cH_1) = 2\nu(\cH_1) = 2\nu(\cH) - 2\abs{Y_1}$. Note however that $X$ is a set of isolated vertices in $\cH_1$, and so removing them changes neither the matching size nor the covering number. Hence $\cH_0 = \cH_1 \setminus X$ also has $\tau(\cH_0) = 2\nu(\cH_0) = 2\nu(\cH) - 2\abs{Y_1}$. By induction on the matching number of the Ryser-tight hypergraph, $\cH_0$ is a home-base hypergraph. This proves that $(Y_1, Y_2, X)$ is a perfectly cromulent triple.
\end{proof}

This lemma shows that if $\link{\cH}{V_i}$ has a good set for any $i$, then we find a perfectly cromulent triple.

\subsection{No Good Sets}

From now on we assume that $\link{\cH}{V_1}$ has no good set. Recall that by 
Theorem~\ref{thm:connoflink}, we know that $\conn(L(\link{\cH}{V_1})) = \nu(\cH) - 2$, and so by Lemma~\ref{lem:goodsets} $\link{\cH}{V_1}$ has a perfect  matching. Moreover for every minimal equineighbored set $X \subseteq V_3$ both it and its neighborhood $N_{\link{\cH}{V_1}}(X)$ have size $2$ and together induce a $C_4$ (possibly with parallel edges). Our next assumption will be that there are two disjoint hyperedges incident to some minimal equineighbored set.

\begin{lem} \label{lem:twodisjointhyperedges}
Suppose IH($k - 1$) holds. Let $\cH$ be a $3$-partite $3$-graph with vertex classes $V_1$, $V_2$, and $V_3$ such that $\tau(\cH) = 2\nu(\cH) = 2k$, and let $\link{\cH}{V_1}$ have no good sets. Suppose there is a minimal equineighbored set $X \subseteq V_3$ in $\link{\cH}{V_1}$ such that there are two disjoint hyperedges $zyx$ and $z'y'x'$ of $\cH$ with $x, x' \in X$. Let $Y_1 = \lset{z, z'} \subseteq V_1$ and $Y_2 = \lset{y, y'} \subseteq V_2$. Then $(Y_1, Y_2, X)$ is a cromulent triple.
\end{lem}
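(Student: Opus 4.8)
The plan is to verify the five defining conditions of a cromulent triple for $(Y_1,Y_2,X)$, writing $G=\link{\cH}{V_1}$ and $\cH_0=\cH-(Y_1\cup Y_2\cup X)$. Since $\tau(\cH)=2\nu(\cH)=2k$, Theorem~\ref{thm:connoflink} gives $\nu(G)=2k$ and $\conn(L(G))=k-2$, so Lemma~\ref{lem:goodsets} applies to the good-set-free graph $G$: it has a perfect matching, and the minimal equineighbored set $X$ satisfies $\abs{X}=\abs{N_G(X)}=2$ with $G[X\cup N_G(X)]$ a $C_4$ (possibly with parallel edges). The two given hyperedges $zyx,z'y'x'$ are disjoint, so $x\neq x'$, $z\neq z'$, $y\neq y'$; hence $X=\lset{x,x'}$, and since $y,y'\in N_G(X)$ and $\abs{N_G(X)}=2$ we get $N_G(X)=\lset{y,y'}=Y_2$, which is condition~(2). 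Then $\abs{Y_1}=\abs{Y_2}=2=\abs{X}$ gives~(1), and $zyx,z'y'x'$ themselves form the matching of size $\abs{Y_1}$ in $\cH|_{Y_1\cup Y_2\cup X}$ demanded by~(3).

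For condition~(4), appending $zyx$ and $z'y'x'$ to any matching of $\cH_0$ shows $\nu(\cH_0)\le\nu(\cH)-2=k-2$. Conversely, by~(2) every edge of $\cH$ whose $V_3$-vertex lies in $X$ has its $V_2$-vertex in $Y_2$, so the union of $Y_1\cup Y_2$ with any minimum vertex cover of $\cH_0$ is a vertex cover of $\cH$; hence $\tau(\cH_0)\ge\tau(\cH)-4=2k-4$. Together with $\tau(\cH_0)\le 2\nu(\cH_0)\le 2(k-2)$ this forces $\tau(\cH_0)=2\nu(\cH_0)=2(k-2)$ and $\nu(\cH_0)=k-2=\nu(\cH)-\abs{Y_1}$. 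Since $\nu(\cH_0)\le k-1$, IH($k-1$) makes $\cH_0$ a home-base hypergraph, which is~(4). (Note $k\ge 2$ here, as $\cH$ already contains two disjoint edges.)

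Condition~(5) is the heart of the matter. I would fix a home-base partition $(\cF,\cR,W)$ of $\cH_0$ and suppose for contradiction that some edge $e_3=vux$ with $v\in W\cap V_1$ and $x\in X$ witnesses a violation; by~(2), $u\in Y_2$. Relabel so that $Y_2=\lset{y_1,y_2}$, $X=\lset{x_1,x_2}$, $Y_1=\lset{z_1,z_2}$ with $z_iy_ix_i\in E(\cH)$, and, using the $C_4$ on $X\cup Y_2$ to replace $e_3$ and one of these edges if necessary, assume $e_3=vy_1x_1$. The first step is a covering argument forcing a ``bridge'': if every edge of $\cH$ meeting $Y_1$ also met $Y_2$, then $Y_2$ together with a minimum cover of $\cH_0$ would cover $\cH$ with $2+2(k-2)=2k-2<2k$ vertices, contradicting $\tau(\cH)=2k$; hence there is an edge $z_ip_2p_3$ with $p_2\in V_2\setminus Y_2$, and then $p_3\in V_3\setminus X$ by~(2), so $p_2,p_3\in V(\cH_0)$. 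The second step is to combine $e_3$, the edge $z_2y_2x_2$, and such a bridge $z_1p_2p_3$ into three pairwise disjoint edges meeting $\cH_0$ in exactly the three vertices $v,p_2,p_3$ (one per vertex class), apply the appropriate case of Corollary~\ref{cor:monstercor} to get $\nu(\cH_0-\lset{v,p_2,p_3})=\nu(\cH_0)=k-2$, and append a matching of that size to obtain a matching of size $k+1$ in $\cH$, contradicting $\nu(\cH)=k$.

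The main obstacle is carrying out this last step robustly. One must pick the bridge at the $z_i$ that is left uncovered by $e_3$ and $z_2y_2x_2$, select a case of Corollary~\ref{cor:monstercor} according to whether $v$ is essential in $\cH_0$ and to the status of $p_2,p_3$ relative to $\cF$ and $\cR$, and deal with the degenerate configurations in which the naive choice fails: for instance when $\lset{p_2,p_3}$ lies inside a single $R\in\cR$ (so deleting it destroys an $R$-component and the monster lemma does not apply), when the relevant $z_i$ admits no bridge at all, or when the $C_4$ only supplies an off-diagonal $e_3$. Resolving these requires re-routing through the parallel $C_4$-edges of $G[X\cup Y_2]$, the edge-home property of the partition of $\cH_0$, and alternative choices of the minimum cover or of the extending edges; this case analysis is where essentially all of the work lies.
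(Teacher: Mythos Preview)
Your treatment of conditions~(1)--(4) is correct and essentially identical to the paper's. The issue is condition~(5), where your plan diverges from the paper in two ways that matter.

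First, your reduction to a \emph{diagonal} offending edge $e_3=vy_1x_1$ (i.e.\ one extending a matching edge $y_ix_i$ of the original pair) is not justified: the $C_4$ in the link guarantees all four edges $y_ix_j$ exist in $G$, but nothing ensures that an \emph{off-diagonal} $vy_2x_1$ can be traded for a diagonal one with $V_1$-vertex still in $W$, nor that a disjoint partner on $(y_1,x_2)$ can be found with the right $V_1$-vertex. In fact the paper's first internal step (its Lemma~\ref{lem:xynotw}) proves precisely that no diagonal edge $wyx$ or $wy'x'$ with $w\in W$ exists at all, so the genuine case is the off-diagonal one you are trying to relabel away.

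Second, and more substantively, your covering argument (``$Y_2$ plus a minimum cover of $\cH_0$'') produces a bridge $z_ip_2p_3$ but gives no control over where $p_2,p_3$ sit in the home-base partition of $\cH_0$; without that, none of the cases of Corollary~\ref{cor:monstercor} need apply. The paper's device here is to use the $i$-heavy $(i,j)$-covers of $\cH_0$ rather than an arbitrary minimum cover: adding three well-chosen vertices to such a cover forces the uncovered edge to have its $V_i$-vertex \emph{superfluous}, which is exactly the hypothesis that unlocks Case~\ref{monstercor:nenrx} (and, with variations, the other cases). Concretely, the paper first uses heavy covers to prove the diagonal exclusion above and then a second internal lemma that manufactures two disjoint edges $e,f$ through $y$ and $y'$ whose $V_1$-vertices are both in $W$ with at least one superfluous and exactly one $V_3$-vertex $p$ in $V(\cH_0)$; only then does it run a five-way case split on the location of $p$ (in $V(\cF)$, in some $R\in\cR$, essential in $W$, non-essential non-superfluous, superfluous), each time choosing a heavy cover tailored to force a third edge with the right superfluous coordinate so that Corollary~\ref{cor:monstercor} applies. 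Your outline anticipates that a case analysis is needed, but without the heavy-cover mechanism you have no way to steer the bridge into a configuration the monster lemma can handle; that mechanism is the missing idea.
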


\begin{proof}
For Condition~\ref{cromulent:size} note that $\abs{Y_1} = \abs{Y_2} = \abs{X} = 2$, since by Lemma~\ref{lem:goodsets} $X$ has size $2$.

Then $X = \lset{x, x'}$ and because $X$ is equineighbored, the neighborhood of $X$ is also of size $2$, that is,  
$N_{\link{\cH}{V_1}}(X) = \lset{y, y'}$. So Condition~\ref{cromulent:neighborhood} is satisfied.

For Condition~\ref{cromulent:matching} note that by assumption there are two disjoint hyperedges $zyx$ and $z'y'x'$ in $\cH|_{Y_1 \cup Y_2 \cup X}$ and that $\abs{Y_1} = 2$.

For Condition~\ref{cromulent:homebase} we first prove that $\tau(\cH_0) = 2\nu(\cH_0) = 2(\nu(\cH) - \abs{Y_1})$. Then we can use IH($k - 1$) to derive the existence of a home-base partition of $\cH_0$. First, consider the graph $\cH_1 = \cH \setminus (Y_1 \cup Y_2)$. Since we have removed only $2\abs{Y_1}$ vertices from $\cH$, it follows that $\tau(\cH_1) \geq \tau(\cH) - 2\abs{Y_1}$. We must have $\nu(\cH_1) \leq \nu(\cH) - \abs{Y_1}$ because $X$ consists of isolated vertices in $\cH_1$, so we may add $zyx$ and $z'y'x'$ to any matching in $\cH_1$ to obtain a matching $2$ larger in $\cH$. Because $\tau(\cH_1) \leq 2\nu(\cH_1)$, we must have equality in both cases, whence $\tau(\cH_1) = 2\nu(\cH_1) = 2\nu(\cH) - 2\abs{Y_1}$. Note however that because $X$ is a set of isolated vertices in $\cH_1$, removing them changes neither the matching size nor the covering number. Hence $\cH_0 = \cH_1 \setminus X$ also has $\tau(\cH_0) = 2\nu(\cH_0) = 2\nu(\cH) - 2\abs{Y_1}$. Thus $\cH_0$ has a home-base partition $(\cF, \cR, W)$.

The proof of Condition~\ref{cromulent:edges} is far more involved and will use a number of internal lemmas, so we give a brief overview. Our goal will be to find a contradiction by providing a larger matching than $\nu(\cH)$ if there is an edge of $\cH$ incident to $X$ and a $W$-vertex of $\cH_0$. This matching will consist of a maximum matching in $\cH_0$ and a few extra edges whose existence will be guaranteed by the high vertex cover number of $\cH$. We utilize the fact that we are quite flexible in choosing a matching for $\cH_0$, so that we can usually avoid the vertices of the extra edges when we choose our matching. Recall the definition of superfluous vertices and $i$-heavy $(i, j)$-covers from Section~\ref{sec:cromulenttriples}.

\begin{lem} \label{lem:xynotw}
There is no edge $wyx$ with $w\in W$. Similarly there is no $wy'x'$.
\end{lem}

\begin{proof}
Suppose $wyx$ is an edge. Take the following partial cover of $\cH$: $y$, $y'$, and $z'$ plus the $2$-heavy $(2, 3)$-cover of $\cH_0$. Since this set of vertices is one too small to be a cover, this implies the existence of an edge $zsp$ avoiding it, where $s$ is superfluous in $\cH_0$, and $p \in V(\cH_0)$. Indeed, an edge not intersecting the partial cover must avoid $Y_2$, hence also $X$, is not in $E(\cH_0)$, and by Observation~\ref{obs:heavysuperfluous}, its $V_2$-vertex is superfluous. By Case~\ref{monstercor:nenrx} of Corollary~\ref{cor:monstercor} applied to $\cH_0$ with $a = s$, $b = w$, $c = p$, and $S = \emptyset$, we can find a matching of size $\nu(\cH_0)$ inside $\cH_0$ avoiding $\lset{s, w, p}$. This matching together with the edges $z'y'x'$, $wyx$, and $zsp$ gives a matching of size $\nu(\cH_0) + 3 = \nu(\cH) + 1$, a contradiction.
\end{proof}

\begin{lem} \label{lem:twoparticularedges}
If there is an edge of $\cH$ incident to $X$ and a vertex of $W \cap V_1$, then there are two disjoint edges of $\cH$ whose $V_1$-vertices are in $W$, at least one being superfluous, whose $V_2$-vertices are $y$ and $y'$, and exactly one of whose $V_3$-vertices are in $V(\cH_0)$.
\end{lem}

\begin{proof}
Suppose there is an edge incident to $w \in W \cap V_1$ and $X$. Without loss of generality suppose it is incident to $x$. Then by Lemma~\ref{lem:xynotw}, it is not incident to $y$, so it must be the edge $wy'x$.

Suppose that $w$ is superfluous in $\cH_0$. Then we will show that $wyx'$ is also an edge of $\cH$ and that $wy'x$ and $wyx'$ are the only edges extending $y'x$ or $yx'$.

Since $X$ is a minimal equineighbored of size $2$, we have $yx' \in E(\link{\cH}{V_1})$, and hence there is some edge $vyx' \in E(\cH)$. Suppose $v \neq w$. Take the partial cover consisting of $\lset{y, y'}$ plus the $2$-heavy $(2, 3)$-cover of $\cH_0$. If $v \in \lset{z, z'}$, then add $v$ to the partial cover. If $v \in R_1 \in \cR$, then add instead the vertex in $R_1 \cap V_3$ to the partial cover. This leaves an edge of the form $(z \mbox{ or } z')sp$ where $s \in V_2$ is superfluous in $\cH_0$ and $p \notin R_1$ (in case $v \in V(\cR)$, hence $R_1$ exists) which is disjoint from $vyx'$. Indeed, an edge not intersecting the partial cover must avoid $Y_2$, hence also $X$, is not in $E(\cH_0)$, and by Observation~\ref{obs:heavysuperfluous}, its $V_2$-vertex is superfluous. If $v \in \lset{z, z'}$, then we can apply Case~\ref{monstercor:nenrx} of Corollary~\ref{cor:monstercor} to $\cH_0$ with $a = w$, $b = s$, $c = p$, and $S = \emptyset$. If $v \in V(\cR)$, then we can apply Case~\ref{monstercor:rnrnr} of Corollary~\ref{cor:monstercor} to $\cH_0$ with $a = v$, $b = p$, $c = s$, and $S = \lset{w}$. And if $v \in V(\cH_0) \setminus V(\cR)$, then we can apply Case~\ref{monstercor:nenrx} of Corollary~\ref{cor:monstercor} to $\cH_0$ with $a = s$, $b = v$, $c = p$, and $S = \lset{w}$. In any case, we find a matching in $\cH_0$ of size $\nu(\cH_0)$ avoiding $\lset{w, v, s, p}$. Then this matching together with $wy'x$, $vyx'$, and $(z \mbox{ or } z')sp$ gives a matching of size $\nu(\cH_0) + 3 = \nu(\cH) + 1$, a contradiction.

Therefore the only edge extending $yx'$ is $wyx'$, and because $wyx'$ is an edge, a similar argument shows that $wy'x$ is the only edge extending $y'x$.

Take a partial cover $\lset{z, z', w}$ plus the $1$-heavy $(1,2)$-cover of $\cH_0$. This leaves an edge $w'(y \mbox{ or } y')p$ where $w'$ is superfluous and $w' \neq w$. Indeed, an edge not intersecting the partial cover is not in $E(\cH_0)$, and by Observation~\ref{obs:heavysuperfluous}, its $V_1$-vertex is superfluous. Also $p \notin \lset{x, x'}$, since $w' \neq w$. It is disjoint from one of $wyx'$ and $wy'x$, so $w'(y \mbox{ or } y')p$ together with whichever of $wyx'$ and $wy'x$ it is disjoint from are the two disjoint edges we are after.

Suppose on the other hand, that there is no edge incident to $\lset{x, x'}$ which extends to a superfluous vertex in $V_1$. Then in particular $w$ is not superfluous in $\cH_0$. Take the partial cover $\lset{z, z', y'}$ plus the $1$-heavy $(1, 3)$-cover of $\cH_0$. This leaves an edge $syp$ where $s$ is superfluous in $\cH_0$, and hence $s \neq w$. Indeed, an edge not intersecting the partial cover is not in $E(\cH_0)$, and by Observation~\ref{obs:heavysuperfluous}, its $V_1$-vertex is superfluous. Also $p \notin \lset{x, x'}$, since $s$ is superfluous. Thus $wy'x$ and $syp$ are the two disjoint edges we are after.
\end{proof}

Thus, suppose that there is an edge incident to $W \cap V_1$ and $X$. By Lemma~\ref{lem:twoparticularedges}, there are two disjoint edges $e$ and $f$ whose vertices intersect $V(\cH_0)$ in $s, w \in W \cap V_1$ and $p \in V_3$. At least one of $s$ and $w$ is superfluous in $\cH_0$, so suppose without loss of generality that $s$ is the superfluous one. We consider several cases, depending on the location of $p$. In each case we will reach a contradiction.

\noindent \textbf{Case 1}. $p \in V(\cF)$.

Take the partial cover $\lset{y, y', z}$, plus the $3$-heavy $(3, 2)$-cover of $\cH_0$. This gives an edge $z'p's'$ where $s'$ is superfluous (hence $s' \neq p$). Indeed, an edge not intersecting the partial cover must avoid $Y_2$, hence also $X$, is not in $E(\cH_0)$, and by Observation~\ref{obs:heavysuperfluous}, its $V_3$-vertex is superfluous. We can apply Case~\ref{monstercor:fwx} of Corollary~\ref{cor:monstercor} with $a = p$, $b = w$, $c = p'$, and $S = \lset{s, s'}$ to obtain a matching of size $\nu(\cH_0)$ in $\cH_0$ avoiding $\lset{s, s', w, p', p}$. This matching together with the edges $e$, $f$, and $z'p's'$ gives a matching of size $\nu(\cH_0) + 3 = \nu(\cH) + 1$, a contradiction.

\noindent \textbf{Case 2}. $p \in R_1 \in \cR$.

Take the partial cover $\lset{y, y'}$ together with the vertex in $R_1 \cap V_2$ and the $3$-heavy $(3, 2)$-cover of $\cH_0$. This gives an edge $(z \mbox{ or } z')p's'$ where $s'$ is superfluous (note $s' \neq p$) and $p'$ is not in $R_1$. Indeed, an edge not intersecting the partial cover must avoid $Y_2$, hence also $X$, is not in $E(\cH_0)$, and by Observation~\ref{obs:heavysuperfluous}, its $V_3$-vertex is superfluous. We can apply Case~\ref{monstercor:rnrnr} of Corollary~\ref{cor:monstercor} with $a = p$, $b = p'$, $c = w$, and $S = \lset{s, s'}$ to obtain a matching of size $\nu(\cH_0)$ in $\cH_0$ avoiding $\lset{s, s', w, p', p}$. This matching together with the edges $e$, $f$, and $(z \mbox{ or } z')p's'$ gives a matching of size $\nu(\cH_0) + 3 = \nu(\cH) + 1$, a contradiction.

\noindent \textbf{Case 3}. $p \in W$ is essential for $R_1 \in \cR$.

Take the partial cover $\lset{y, y'}$, the $V_2$-vertex essential for $R_1$ if it exists, plus the $3$-heavy $(3, 2)$-cover of $\cH_0$. This gives an edge $(z \mbox{ or } z')p's'$ where $s'$ is superfluous (hence $s' \neq p$) and $p'$ is not essential for $R_1$. Indeed, an edge not intersecting the partial cover must avoid $Y_2$, hence also $X$, is not in $E(\cH_0)$, and by Observation~\ref{obs:heavysuperfluous}, its $V_3$-vertex is superfluous. We can apply Case~\ref{monstercor:enenr} of Corollary~\ref{cor:monstercor} with $a = p$, $b = p'$, $c = w$, and $S = \lset{s, s'}$ to obtain a matching of size $\nu(\cH_0)$ in $\cH_0$ avoiding $\lset{s, s', w, p', p}$. This matching together with the edges $e$, $f$, and $(z \mbox{ or } z')p's'$ gives a matching of size $\nu(\cH_0) + 3 = \nu(\cH) + 1$, a contradiction.

\noindent \textbf{Case 4}. $p \in W$ is not essential but not superfluous.

Take the partial cover $\lset{y, y'}$ plus the $3$-heavy $(3, 2)$-cover of $\cH_0$. This gives an edge $(z \mbox{ or } z')p's'$ where $s'$ is superfluous, hence $s' \neq p$. Indeed, an edge not intersecting the partial cover must avoid $Y_2$, hence also $X$, is not in $E(\cH_0)$, and by Observation~\ref{obs:heavysuperfluous}, its $V_3$-vertex is superfluous. By Lemma~\ref{lem:superfluousessential}, $p$ does not become essential after removing a superfluous vertex from $V_3$. Then we can apply Case~\ref{monstercor:nenrx} of Corollary~\ref{cor:monstercor} with $a = p$, $b = w$, $c = p'$, and $S = \lset{s, s'}$ to obtain a matching of size $\nu(\cH_0)$ in $\cH_0$ avoiding $\lset{s, s', w, p', p}$. This matching together with the edges $e$, $f$, and $(z \mbox{ or } z')p's'$ gives a matching of size $\nu(\cH_0) + 3 = \nu(\cH) + 1$, a contradiction.

\noindent \textbf{Case 5}. $p \in W$ is superfluous.

Take the partial cover $\lset{y, y', p}$ plus the $2$-heavy $(2, 3)$-cover of $\cH_0$. This gives an edge $(z \mbox{ or } z')s'p'$ where $s'$ is superfluous and $p' \neq p$. Indeed, an edge not intersecting the partial cover must avoid $Y_2$, hence also $X$, is not in $E(\cH_0)$, and by Observation~\ref{obs:heavysuperfluous}, its $V_2$-vertex is superfluous. We can apply Case~\ref{monstercor:nenrx} of Corollary~\ref{cor:monstercor} with $a = s'$, $b = w$, $c = p'$, and $S = \lset{s, p}$ to obtain a matching of size $\nu(\cH_0)$ in $\cH_0$ avoiding $\lset{s, s', w, p', p}$. This matching together with the edges $e$, $f$, and $(z \mbox{ or } z')s'p'$ gives a matching of size $\nu(\cH_0) + 3 = \nu(\cH) + 1$, a contradiction.

Thus we conclude that there can be no edge incident to $W \cap V_1$ and $X$, so Condition~\ref{cromulent:edges} must hold, and hence $(Y_1, Y_2, X)$ is a cromulent triple.
\end{proof}

Thus, if we either have a good set, or if we have no good set and there are two disjoint hyperedges incident to a minimal equineighbored subset of some link graph, then we find a cromulent triple, and hence have found a home-base partition by Corollary~\ref{cor:cromulenttohbh}. Therefore, the only hypergraphs left to check are those which have no good set and where the hyperedges incident to any equineighbored subset of any link graph form intersecting hypergraphs. This case is handled in the next section.

\section{The End Game} \label{sec:theendgame}

We start with the following easy proposition which will be useful in what is to come:

\begin{prop} \label{prop:extendtof}
Let $\cH$ be a $3$-partite $3$-graph with vertex classes $V_1$, $V_2$, and $V_3$ such that each link $\link{\cH}{V_i}$ has a perfect matching. Suppose $X \subseteq V_j$ is a minimal equineighbored set of $\link{\cH}{V_i}$ with $\abs{X} = 2$, and suppose $X$ is not incident to two disjoint edges of $\cH$. Then the edges incident to $X$ form a truncated multi-Fano plane.
\end{prop}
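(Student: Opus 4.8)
The plan is to determine, purely combinatorially, exactly which hyperedges can be incident to $X$ and then recognise the resulting list as the edge set of a truncated Fano plane (possibly with parallel copies). Assume without loss of generality that $i=1$ and $j=3$, so $X\subseteq V_3$; write $G=\link{\cH}{V_1}$, a bipartite graph on $V_2\cup V_3$, and let $k=2$ be the remaining index, so that $\link{\cH}{V_2}$ is a bipartite graph on $V_1\cup V_3$.

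First I would pin down the local structure of $G$ around $X$. Since $G$ has a perfect matching, $x_1$ and $x_2$ are not isolated in $G$; since $X=\lset{x_1,x_2}$ is equineighbored, $N_G(X)=\lset{y_1,y_2}=:Y$ has size $2$; and since $X$ is \emph{minimal} equineighbored, neither $\lset{x_1}$ nor $\lset{x_2}$ is equineighbored, so (being non-isolated with neighbourhood contained in $Y$) each $x_s$ must satisfy $N_G(x_s)=Y$. Hence $G[X\cup Y]$ is a complete bipartite $K_{2,2}$, and in particular each of the four pairs $x_sy_t$ is the projection into $G$ of at least one hyperedge of $\cH$. Moreover every hyperedge incident to $X$ has its $V_3$-vertex in $X$ and hence, projecting into $G$, its $V_2$-vertex in $Y$; so it has the form $\lset{a,y_t,x_s}$ with $a\in V_1$.

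Next I would nail down the $V_1$-vertices. Call a hyperedge of the above form \emph{odd} if its projection is $x_1y_1$ or $x_2y_2$, and \emph{even} if it is $x_1y_2$ or $x_2y_1$; every hyperedge incident to $X$ is odd or even. A hyperedge projecting to $x_1y_1$ and a hyperedge projecting to $x_2y_2$ agree on no $V_2$- or $V_3$-vertex, hence are vertex-disjoint unless they share their $V_1$-vertex; since $X$ is not incident to two disjoint edges of $\cH$, they must share it. As a hyperedge of each of the two projection-types exists (the $K_{2,2}$), this forces a single vertex $a_1\in V_1$ to be the $V_1$-vertex of every odd hyperedge incident to $X$; symmetrically there is a single $a_2\in V_1$ which is the $V_1$-vertex of every even one. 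Consequently every hyperedge incident to $X$ has underlying vertex set among $\lset{a_1,y_1,x_1}$, $\lset{a_1,y_2,x_2}$, $\lset{a_2,y_1,x_2}$, $\lset{a_2,y_2,x_1}$, and each of these four sets really occurs (one for each edge of the $K_{2,2}$).

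It remains to show $a_1\neq a_2$, and this is the one place where intersecting-ness is not enough and the perfect-matching hypothesis for the \emph{third} link is needed. The bipartite graph $\link{\cH}{V_2}$ on $V_1\cup V_3$ has a perfect matching $M$; it covers $x_1$ and $x_2$ by two distinct edges $\lset{a',x_1}$ and $\lset{a'',x_2}$ with $a'\neq a''\in V_1$, and these edges are projections of hyperedges of $\cH$ containing $x_1$, resp.\ $x_2$, hence of hyperedges incident to $X$. By the previous paragraph $a',a''\in\lset{a_1,a_2}$, so $a'\neq a''$ forces $\lset{a_1,a_2}$ to have two elements. Now the edges incident to $X$ are precisely the hyperedges whose vertex sets lie among the four sets above, each occurring with positive multiplicity, and the map $a_1\mapsto a$, $a_2\mapsto x$, $y_1\mapsto b$, $y_2\mapsto y$, $x_1\mapsto c$, $x_2\mapsto z$ identifies those four sets with $\lset{abc,ayz,xbz,xyc}$, the edges of the truncated Fano plane; hence the edges incident to $X$ form a truncated multi-Fano plane. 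The only real subtlety is noticing that the disjointness hypothesis alone does not preclude $a_1=a_2$ and that it is the perfect matching of $\link{\cH}{V_k}$ — rather than of $\link{\cH}{V_i}$ — that rules this out; the rest is bookkeeping with the $K_{2,2}$.
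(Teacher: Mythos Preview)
Your proof is correct and follows essentially the same approach as the paper's: first observe that $G[X\cup N_G(X)]$ is a $C_4$, then use the intersecting hypothesis to show that each pair of opposite edges of the $C_4$ extends to a single common vertex in $V_1$, and finally use the perfect matching of $\link{\cH}{V_2}$ (the link with respect to the \emph{third} class) to rule out $a_1=a_2$. Your write-up is more explicit than the paper's---in particular your odd/even bookkeeping spells out why all hyperedges with the same parity share their $V_1$-vertex---but the underlying argument is the same.
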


\begin{proof}
Since $X$ is a minimal equineighbored set of size $2$ and $\link{\cH}{V_i}$ has no isolated vertices, it follows easily that the edges of $\link{\cH}{V_i}$ incident to $X$ form a $C_4$ (possibly with parallel edges). By assumption, the edges incident to $X$ form an intersecting hypergraph. Since the hyperedges incident to $X$ all intersect, each pair of opposite edges in the $C_4$ must extend to one vertex in $V_i$. If this is the same vertex $v$ for all pairs, then $N_{\link{\cH}{V_k}}(X) = \lset{v}$, where $V_k$ is the third vertex class besides $V_i$ and $V_j$. This contradicts the fact that $\link{\cH}{V_k}$ has a perfect matching, so each pair extends to a different vertex, which gives the truncated Fano plane. If there are parallel edges in the $C_4$, this analysis shows that they also have to extend to the same vertex as the edges to which they are parallel, hence we have a truncated multi-Fano plane.
\end{proof}

We aim to prove the following lemma, which is the missing ingredient in our proof of Theorem~\ref{thm:characterization}.

\begin{lem} \label{lem:endgame}
Suppose IH($k - 1$) holds. Let $\cH$ be a $3$-partite $3$-graph with vertex classes $V_1$, $V_2$, and $V_3$ such that $\tau(\cH) = 2\nu(\cH) = 2k$. Suppose that $\cH$ does not have a cromulent triple. Then there is an $X \subseteq V_3$, which is a minimal equineighbored set for $\link{\cH}{V_1}$ such that for its neighborhood $Y = N_{\link{\cH}{V_1}}(X)$ we also have $N_{\link{\cH}{V_1}}(Y) = X$.
\end{lem}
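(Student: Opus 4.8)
Here is how I would go about proving \lemref{lem:endgame}. The plan is to reduce the hypothesis to a few combinatorial facts about the link $G:=\link{\cH}{V_1}$, recast the conclusion purely graph-theoretically, and then read the desired component off a CP-decomposition of $G$.

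First I would record what ``$\cH$ has no cromulent triple'' gives. Applying \lemref{lem:goodtocromulent} to each of the three links (exploiting the symmetry of $V_1,V_2,V_3$, and inside $\link{\cH}{V_1}$ the symmetry of its two classes $V_2,V_3$) together with IH($k-1$) shows that no link of $\cH$ has a good set, since a good set would yield a perfectly cromulent — hence cromulent — triple. With \thmref{thm:connoflink} (so $\nu(\link{\cH}{V_i})=\tau(\cH)=2k$ and $\conn(L(\link{\cH}{V_i}))=k-2$) and \lemref{lem:goodsets}, each link therefore has a perfect matching and all its minimal equineighbored subsets have size $2$ and span a $C_4$ (with possible parallel edges); in particular there are no equineighbored singletons. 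By \lemref{lem:twodisjointhyperedges} and IH($k-1$), no minimal equineighbored subset $X$ of $G$ (in either class) meets two disjoint hyperedges, so Proposition~\ref{prop:extendtof} makes the hyperedges meeting $X$ a truncated multi-Fano plane; for minimal equineighbored $X=\{x_1,x_2\}\subseteq V_3$ with $N_G(X)=\{y_1,y_2\}$ this means there are distinct $v_1,v_2\in V_1$ such that the hyperedges meeting $X$ are exactly the parallels of $v_1x_1y_1$, $v_1x_2y_2$, $v_2x_2y_1$, $v_2x_1y_2$, whence $N_G(x_1)=N_G(x_2)=\{y_1,y_2\}$ and $N_{\link{\cH}{V_2}}(X)=\{v_1,v_2\}$.

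Next I would observe that the conclusion is equivalent to: \emph{$G$ has a connected component that is a $C_4$ (with parallel edges)}. Indeed, if $X$ is minimal equineighbored with $N_G(N_G(X))=X$, then $X\cup N_G(X)$ spans a $C_4$ and has no edges to the rest of $G$, so it is a component; conversely a $C_4$-component supplies such an $X$. Now fix a CP-decomposition of $G$ — it exists by \thmref{thm:cpdecomposition} because $\conn(L(G))=\nu(G)/2-2$ — partitioning the $4k$ vertices into $k$ blocks, each a $C_4$ or a $P_4$. Since $G$ has a perfect matching and no equineighbored singleton, the two endpoints of a $P_4$-block have degree $\ge 2$, so each sends an edge out of its block, and by the defining property of a CP-decomposition this edge is incident to an interior vertex of some $P_4$-block. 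Hence a component containing only $C_4$-blocks is a single $C_4$-block, and a single-block component is a $C_4$-component (a lone $P_4$-block would have degree-$1$ endpoints). So it suffices to find a component with no $P_4$-block, and we are done at once if the CP-decomposition has none.

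Otherwise, suppose for contradiction that $G$ has no $C_4$-component; then every component contains a $P_4$-block, and I would aim to manufacture a good set in $G$, contradicting the first step. The key lever is that, for minimal equineighbored $X\subseteq V_3$, the equality $N_G(X)=\{y_1,y_2\}$ forces $y_1$ and $y_2$ to be matched inside $X$ in \emph{every} perfect matching, so all edges from $y_1$ or $y_2$ to $V_3\setminus X$ lie in no maximum matching; deleting any such edges therefore keeps $\nu(G)=2k$, hence keeps the floor $k-2$ of \thmref{thm:matchconn}. One also checks $X$ is decent. Thus for each $y\in N_G(X)$, deleting $y$'s edges to $V_3\setminus X$ either raises $\conn$ — which is what ``good'' requires — or, by \thmref{thm:cpdecomposition}, the resulting graph still has $\conn=\nu/2-2$, hence a CP-decomposition, and inspecting the block of $y$ forces $\{x_1,x_2,y_1,y_2\}$ to appear there as a $C_4$-block whose only escaping edges go to $P_4$-interiors. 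Iterating this, and feeding in the mirror statement coming from $N_{\link{\cH}{V_2}}(X)=\{v_1,v_2\}$ (and $\link{\cH}{V_3}$, where the same six vertices reappear as a truncated multi-Fano plane), one should push the $P_4$-blocks out of the component entirely, producing the contradiction. I expect this last step to be the main obstacle: controlling $\conn(L)$ under these edge deletions, forcing the iteration to terminate (using finiteness of $\cH$ and an extremal choice of the initial minimal equineighbored set — e.g. one inside a smallest component, or whose truncated multi-Fano plane leaks as little as possible), and in particular handling the ``pendant'' case in which only one of $y_1,y_2$ meets $V_3\setminus X$, so that $X$ itself is not good for a trivial reason and the argument must be re-run in $\link{\cH}{V_2}$ or $\link{\cH}{V_3}$. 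Everything else — the reductions via \lemref{lem:goodtocromulent}, \lemref{lem:goodsets}, \lemref{lem:twodisjointhyperedges}, Proposition~\ref{prop:extendtof}, the $C_4$-component reformulation, and the $P_4$-block reduction — is routine.
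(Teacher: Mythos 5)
Your reductions are the right ones and match the paper's setup: no cromulent triple gives (via Lemma~\ref{lem:goodtocromulent}) that no link has a good set, Lemma~\ref{lem:goodsets} then yields perfect matchings and that every minimal equineighbored set has size~$2$ and spans a $C_4$, and Lemma~\ref{lem:twodisjointhyperedges} plus Proposition~\ref{prop:extendtof} make the hyperedges at such a set a truncated multi-Fano plane. Your reformulation of the conclusion as ``$\link{\cH}{V_1}$ has a $C_4$-component'' is correct and a clean way to phrase what is being sought.

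However, the contradiction step --- the heart of the proof --- is where the proposal has a genuine gap, and you yourself flag it. You hope to ``manufacture a good set'' or ``push the $P_4$-blocks out of the component'' by iterating the $C_4$-block observation; this is not substantiated, and it is not how the paper closes the argument. The paper's route is not purely link-graph-theoretic. Supposing no $C_4$-component exists, the paper: (1) locates a minimal equineighbored $X_3\subseteq V_3$ both of whose neighbors leave $X_3$ (using the endpoint set of $P_4$-blocks), picks the witness $y'$ of non-goodness, and follows $M$-alternating paths in $G_{y'}$ from $y$ to build a second minimal equineighbored set $X_2$ \emph{in the other class} $V_2$; (2) proves the key Lemma~\ref{extendtosame}: because deleting the $y'$-escaping and $w'$-escaping edges simultaneously leaves a graph $G_{y',w'}$ with no CP-decomposition, a connectivity computation via Proposition~\ref{prop:linkconn} and K\"onig forces the two truncated Fano planes (at $X_3$ and at $X_2$) to share their pair of $V_1$-vertices $\{z,z'\}$; and (3) deletes $\{y,w,z,z'\}$ from $\cH$, verifies the remainder is Ryser-extremal with $\nu=k-2$, invokes IH($k-1$) to get a home-base partition, and then uses the heavy covers and the monster lemma (Corollary~\ref{cor:monstercor}) to exhibit either a too-small cover or a too-large matching in $\cH$. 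None of these three ingredients appears in your sketch, and the iteration you gesture at does not obviously terminate or produce the contradiction. So the proposal is incomplete: the setup is correct, but the essential mechanism --- the cross-class alternating-path construction, the coinciding extensions, and the drop to a smaller Ryser-extremal hypergraph where IH and the monster lemma finish --- is missing.
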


\begin{proof}
We have shown in Lemma~\ref{lem:goodtocromulent} that we have a cromulent triple if there is at least one good set, which means we are working under the assumption that $\link{\cH}{V_1}$ has no good set. By Lemma~\ref{lem:goodsets}, we then know that $\link{\cH}{V_1}$ has a perfect matching and that every minimal equineighbored set is of size $2$ and hence is incident to a $C_4$. Therefore, it is clear that every edge incident to a minimal equineighbored set participates in a perfect matching, so we have shown that every minimal equineighbored set is still decent.

If $X \subseteq V_3$ is a minimal equineighbored set, for $y \in N_{\link{\cH}{V_1}}(X)$ define the bipartite graph $G_y = \link{\cH}{V_1} - \set{yz \in E(\link{\cH}{V_1})}{z \in V_3 \setminus X}$. Since $X$ is decent but not good, it must be that for some $y \in N_{\link{\cH}{V_1}}(X)$ we have
\[
	\conn(L(G_y) \leq \conn(L(\link{\cH}{V_1})).
\]
A similar statement holds if $X \subseteq V_2$.

Now suppose for the sake of contradiction to the statement of Lemma~\ref{lem:endgame} that for every minimal equineighbored subset $X$ in $\link{\cH}{V_1}$, its neighborhood $Y$ has neighbors outside of $X$. Again, Theorem~\ref{thm:connoflink} gives that $\link{\cH}{V_1}$ is extremal for Theorem~\ref{thm:matchconn}, and hence it has a CP-decomposition by Theorem~\ref{thm:cpdecomposition}. We know that any CP-decomposition of $\link{\cH}{V_1}$ contains some $P_4$'s, since otherwise the graph would consist entirely of disjoint $C_4$'s, which is not the case if there are edges between $Y$ and $V_3 \setminus X$.

\begin{claim}
The graph $\link{\cH}{V_1}$ contains a minimal equineighbored set $X \subseteq V_3$ for which both elements of $N(X)$ have neighbors outside $X$ in $\link{\cH}{V_1}$.
\end{claim}

\begin{proof}
Let $Z$ be the set of endpoints of $P_4$'s in $V_3$ for some CP-decomposition of $\link{\cH}{V_1}$ with respect to some perfect matching $M$. Then $Z$ is equineighbored because the edges incident to the endpoints in $V_3$ all must contain an interior vertex in $V_2$ either of the same $P_4$ or of some other one. The set of interior vertices of $P_4$'s in $V_2$ is matched by $M$ to the set of endpoints of $P_4$'s in $V_3$, so these are the same size. Therefore $\abs{Z} = \abs{N(Z)}$. Since $Z$ is equineighbored, it contains a minimal equineighbored subset $X$.

Since $X$ consists of endpoints of $P_4$'s and $N(X)$ consists of interior vertices of $P_4$'s, the vertices in $N(X)$ all have neighbors outside $X$: the other interior vertices of their respective $P_4$'s.
\end{proof}

Fix a perfect matching $M$ of the link graph $\link{\cH}{V_1}$. Let $X_3 \subseteq V_3$ be a minimal equineighbored set for which both elements of $N(X_3)$ have neighbors outside $X_3$, and let $N(X_3) = \lset{y, y'}$. Let $X_3 = \lset{x, x'}$ so that $yx, y'x' \in M$. Without loss of generality, let $y'$ be a vertex of $N(X_3)$ that witnesses the failure of $X_3$ to be good; that is, we have
\[
	\conn(L(G_{y'})) \leq \conn(L(\link{\cH}{V_1})).
\]
Then by Theorem~\ref{thm:cpdecomposition}, $G_{y'}$ has a CP-decomposition with respect to $M$ (since no edges of $M$ were erased, and hence $G_{y'}$ is still extremal for Theorem~\ref{thm:matchconn}). We claim that in every CP-decomposition of $G_{y'}$, the two vertices of $X_3$ are together in one of the $C_4$'s of the decomposition. The edge $x'y'$ is an edge of $M$, so it must be in some $C_4$ or $P_4$ of the CP-decomposition. Since $N_{G_{y'}}(y') = X$, and $N_{G_{y'}}(x') = N_{\link{\cH}{V_1}}(X_3)$, this $C_4$ or $P_4$ must be contained in $G_{y'}[X_3 \cup N(X_3)]$. But we know the edges in $G_{y'}[X_3 \cup N(X_3)]$ form a $C_4$, so $x'y'$ can't be contained in a $P_4$ of the CP-decomposition (one of the edges $xy'$ and $x'y$ would not be at home anywhere).

Let $Z_2$ be the set of vertices in $V_2$ reachable by $M$-alternating paths in $G_{y'}$ starting at $y$ with an edge not in $M$ (including $y$ itself). Note that $Y \subseteq Z_2$. We have $\abs{N_{G_{y'}}(Z_2)} = \abs{Z_2}$ because every vertex of $V_3$ we reach is matched to a vertex of $V_2$ which is included in $Z_2$. Then $Z_2$ contains a minimal equineighbored set $X_2$. Note that $X_2$ is disjoint from $Y$, since $X_2 \setminus Y$ must also be equineighbored (because $X_3$ is taken out of the neighborhood), and $X_2 \setminus Y$ is not empty because $\abs{N_{G_{y'}}(Y)} > 2$. This means also that $X_2$ has exactly the same neighborhood in $G_{y'}$ and in $\link{\cH}{V_1}$, and so it is also a minimal equineighbored set for $\link{\cH}{V_1}$. Therefore, $\abs{X_2} = 2$ and the edges incident to $X_2$ form a $C_4$.

\begin{lem} \label{endsofps}
In any CP-decomposition of $G_{y'}$ all vertices of $Z_2 \setminus N(X_3)$ are endpoints of $P_4$'s, and all vertices of $N(Z_2 \setminus N(X_3))$ are interior vertices of $P_4$'s.
\end{lem}

\begin{proof}
Since the $(y', V_3 \setminus X_3)$-edges are erased, any CP-decomposition of $G_{y'}$ must have a $C_4$ on $X_3 \cup N(X_3)$. So any $M$-alternating path going out from $y$ (not to $X_3$) must go first to an interior vertex of a $P_4$, which is matched to an endpoint of that $P_4$, and so on, alternating between interior vertices and endpoints. So the neighbors of $Z_2 \setminus N(X_3)$ are interior vertices and the vertices of $Z_2 \setminus N(X_3)$ are endpoints.
\end{proof}

This shows in particular that both vertices of $X_2$ are endpoints of $P_4$'s, and both vertices of $N(X_2)$ are interior vertices of $P_4$'s, and hence both have neighbors outside of $X_2$.

\begin{lem} \label{extendtosame}
If $X \subseteq V_3$ and $X' \subseteq V_2$ are minimal equineighbored subsets of $\link{\cH}{V_1}$ with $X' \cap N(X) = \emptyset$, and there is an $M$-alternating path from $N(X)$ to $N(X')$ starting with a non-matching edge, then the edges incident to $X$ and the edges incident to $X'$ extend to the same two vertices $\lset{z, z'} \subseteq V_1$.
\end{lem}

\begin{proof}
We have seen that each link graph $\link{\cH}{V_i}$ has a perfect matching, and we know $\abs{X} = 2$ and is not incident to two disjoint hyperedges, so by Proposition~\ref{prop:extendtof}, the edges incident to $X$ form a truncated Fano plane.

Let $N(X) = \lset{y, y'}$, and let $N(X') = \lset{w, w'}$, where without loss of generality $y$ is the last vertex of $N(X)$ visited on the $M$-alternating path, and $w$ is the first vertex of $N(X')$ visited. Let $G_{y', w'}$ be the graph formed by erasing \emph{both} the $(y', V_3 \setminus X)$-edges and the $(w', V_2 \setminus X_2)$-edges from $\link{\cH}{V_1}$. We will show that $G_{y', w'}$ does \emph{not} have a CP-decomposition. Suppose it did. Then fix a CP-decomposition of $G_{y', w'}$. Both $X$ and $X'$ would need to consist of vertices of a $C_4$ in the CP-decomposition of $G_{y', w'}$, as previously observed for $G_{y'}$. However since there is an $M$-alternating path from $y$ to $w$ starting with a non-matching edge, we will see that this leads to a contradiction. Consider the first edge $yv$ of this path. It is not an edge of a $C_4$ or $P_4$ of the CP-decomposition, so it must be at home in some $P_4$, and since $y$ is not an interior vertex of a $P_4$ of the CP-decomposition, it follows that $v$ is. The next edge is an edge of $M$ which pairs the interior vertex $v$ with an endpoint. The next edge must be at home in some $P_4$, hence its other vertex is again an interior vertex of that $P_4$. Continuing in this manner, one sees that the even vertices of the path ($y$ being the first vertex) are interior vertices of $P_4$'s of the CP-decomposition. However, since $w$ is one of the even vertices, this contradicts the fact that $w$ is a vertex of a $C_4$ of the CP-decomposition. Therefore no CP-decomposition is possible, and hence by the contrapositive of Theorem~\ref{thm:cpdecomposition}, we must have
\begin{equation} \label{gy'w'}
	\conn(L(G_{y', w'})) \geq \frac{\nu(G_{y', w'})}{2} - 1 = \frac{\nu(\link{\cH}{V_1})}{2} - 1 = \nu(\cH) - 1,
\end{equation}
where the last equality is by Theorem~\ref{thm:connoflink}.

Consider the hypergraph $\cH_{y', w'}$ that results by removing from $\cH$ the edges inducing the $(y', V_3 \setminus X)$-edges and the $(w', V_2 \setminus X_2)$-edges in $\link{\cH}{V_1}$. Then clearly $\link{\cH_{y', w'}}{V_1} = G_{y', w'}$. We have $\tau(\cH_{y', w'}) \geq \tau(\cH) - 2$, since we can cover all of the deleted edges with two vertices, and we clearly have $\nu(\cH_{y', w'}) \leq \nu(\cH)$. Therefore by parts~\ref{linkconn:upper} and~\ref{linkconn:size} of Proposition~\ref{prop:linkconn}, there is some $S \subseteq V_1$ such that $\conn(L(\link{\cH_{y', w'}}{S})) \leq \nu(\cH) - (\abs{V_1} - \abs{S}) - 2$ and $\abs{S} \geq \abs{V_1} - 2$. 
(Note that if $|V_1| >2$ then Proposition~\ref{prop:linkconn} is applicable, and otherwise
the conclusion of the lemma is immediate.)
We know $S \neq V_1$ because the first inequality fails for $V_1$, as we have just concluded in the preceding paragraph.

Combining the inequality for $\conn(L(\link{\cH_{y', w'}}{S}))$ with the inequality in 
Theorem~\ref{thm:matchconn} gives that $\nu(\link{\cH_{y', w'}}{S}) \leq 2\nu(\cH) - 2(\abs{V_1} - \abs{S})$. Recalling the vertex cover $T_S$ of $\cH$ consisting of $V_1 \setminus S$ and a minimal vertex cover of $\link{\cH}{S}$ gives that $\nu(\link{\cH}{S}) \geq \tau(\cH) - (\abs{V_1} - \abs{S})$ (by K\"onig's Theorem). Thus we have
\begin{equation} \label{smallernu}
	\nu(\link{\cH_{y', w'}}{S}) \leq \nu(\link{\cH}{S}) - (\abs{V_1} - \abs{S}).
\end{equation}

Therefore, every maximum matching of $\link{\cH}{S}$ has to contain an edge that gets erased in $\cH_{y', w'}$. If $xy$ and $x'y'$ are in $\link{\cH}{S}$, then we can change any maximum matching to avoid a $(y', V_3 \setminus X)$-edge without changing the cardinality of the matching, and similarly for $xy'$ and $x'y$. Analogously, we can avoid a $(w', V_2 \setminus X')$-edge if either pair of opposite edges of the $C_4$ incident to $X'$ is contained in $\link{\cH}{S}$. Therefore for one of the $C_4$'s, no pair of opposite edges is contained in $\link{\cH}{S}$. This implies that the two vertices of $V_1$ to which the edges of the $C_4$ extend are not in $S$, and hence in fact $\abs{S} = \abs{V_1} - 2$.

This of course means that every maximum matching of $\link{\cH}{S}$ has to contain \emph{two} edges that get erased in $\cH_{y', w'}$, so no pair of opposite edges of either $C_4$ is contained in $\link{\cH}{S}$, and hence the vertices of $V_1$ to which the edges extend are not in $S$. But each $C_4$ extends to exactly two vertices, as observed in Lemma~\ref{prop:extendtof}, and since $\abs{S} = \abs{V_1} - 2$, they must be the same two vertices for $X$ and $X'$, as claimed.
\end{proof}

Lemma~\ref{extendtosame} applied to $X_2$ and $X_3$ shows that $\cH$ has two truncated Fano planes intersecting in two vertices $\lset{z, z'} \subseteq V_1$. We will see that this leads to a contradiction.

Let $X_2 = \lset{v, v'}$, and let $N(X_2) = \lset{w, w'}$. Assume without loss of generality that the truncated Fano planes consist of the edges $\lset{zyx, zy'x', z'yx', z'y'x}$ and $\lset{zvw, zv'w', z'vw', z'v'w}$. Consider the hypergraph $\cH' = \cH - \lset{y, w, z, z'}$, and note that $X_3$ and $X_2$ consist of isolated vertices in $\cH'$, since all edges incident to them are incident to $\lset{z, z'}$. Because we have deleted only four vertices, we clearly have $\tau(\cH') \geq \tau(\cH) - 4$. To any matching in $\cH'$ we may add $zyx$ and $z'vw$ to get a matching two larger in $\cH$, so we must have $\nu(\cH') \leq \nu(\cH) - 2$. Combining this with the assumption that $\tau(\cH) = 2\nu(\cH)$ and the fact that Ryser's Conjecture is true for $3$-partite hypergraphs we get the following sequence of inequalities:
\[
	\tau(\cH') \leq 2\nu(\cH') \leq 2\nu(\cH) - 4 = \tau(\cH) - 4 \leq \tau(\cH').
\]
Since the first and last expressions are the same, all inequalities are actually equalities, and hence $\cH'$ is also extremal for Ryser's Conjecture, with $\nu(\cH') = k - 2$. Therefore, by the inductive hypothesis IH($k - 1$), $\cH'$ has a home-base partition $(\cF, \cR, W)$.

We will find either a vertex cover of size $\tau(\cH) - 1$, or a matching of size $\nu(\cH) + 1$ in $\cH$, either of which gives our desired contradiction.

Consider the minimal vertex cover of $\cH'$ consisting of $V(\cF) \cap V_1$ and $V(\cR) \cap (V_1 \cup V_3)$. If adding the three vertices $z$, $z'$, and $w$ to this set would form a vertex cover $T$ of $\cH$, we would have a contradiction and be done, so we may assume that there is some edge $e \in E(\cH)$ which avoids $T$. Its $V_1$-vertex must be in $W$, since $(V(\cF) \cup V(\cR)) \cap V_1 \cup \lset{z, z'} \subseteq T$. Its $V_3$-vertex must be in $V(\cF) \cup W$, since $V(\cR) \cap V_3 \cup \lset{w} \subseteq T$ and any edge incident to $X_3$ intersects $T$ in $\lset{z, z'}$. Its $V_2$-vertex cannot be in $V(\cH')$, since otherwise $e$ would be an edge of $\cH'$ and hence intersect $T$, and its $V_2$-vertex also cannot be in $X_2$, since all edges incident to $X_2$ intersect $T$ in $\lset{z, z'}$. Therefore $e$ must go through $y$, so it is of the form $ayb$ for some vertices $a \in W \cap V_1$ and $b \in (V(\cF) \cup W) \cap V_3$.

Suppose we can find a maximum matching in $\cH'$ avoiding $a$, $y'$, and $b$. Then this matching plus the three disjoint edges $zy'x'$, $z'v'w$, and $ayb$ would form a matching of size $\nu(\cH) + 1$ in $\cH$, a contradiction.

By the monster lemma (Lemma~\ref{lem:monster}), we can find a matching of size $\nu(\cH')$ in $\cH' - \lset{a, y', b}$ if there is an $F$-edge avoiding $\lset{a, y', b}$ for each $F \in \cF$, and an $R$-edge avoiding $\lset{a, y', b}$ for each $R \in \cR$. Since $a \in W$, and $y'$ and $b$ are in different vertex classes, we do not cover all $F$-edges for any $F \in \cF$. Since $a, b \notin V(\cR)$, we could pick an RWR-edge for any $R \in \cR$ avoiding $\lset{a, y', b}$ \emph{unless} $y'$ is a $W$-vertex essential for some $R \in \cR$. This means that if $y' \notin W$, we have the desired contradictory matching, and hence we may assume $y' \in W$.

Consider the $1$-heavy $(1, 3)$-cover of $\cH'$ (see Section~\ref{sec:cromulenttriples} for the definition), which is a minimal vertex cover of $\cH'$. If adding the three vertices $z$, $z'$, and $w$ to this set would form a vertex cover $T'$ of $\cH$, we would again have a contradiction, so we may assume that some edge $e' \in E(\cH)$ avoids $T'$. Its $V_1$-vertex must be a superfluous $W$-vertex, since all other $V_1$-vertices are in $T'$. Its $V_3$-vertex must be in $V(\cH')$, since $w \in T'$ and any edge incident to $X_3$ intersects $T'$ in $\lset{z, z'}$. Its $V_2$-vertex cannot be in $V(\cH')$, since otherwise $e'$ would be an edge of $\cH'$ and hence intersect $T'$, and its $V_2$-vertex also cannot be in $X_2$, since all edges incident to $X_2$ intersect $T'$ in $\lset{z, z'}$. Therefore $e'$ must go through $y$, so it is of the form $a'yb'$ for some superfluous vertex $a' \in W \cap V_1$ and some vertex $b' \in V(\cH') \cap V_3$.

By part~\ref{monstercor:nenrx} of Corollary~\ref{cor:monstercor} of the monster lemma applied to $\cH'$ with $a = a'$, $b = y'$, and $c = b'$, there is a matching of size $\nu(\cH')$ in $\cH'$ avoiding $a'$, $y'$, and $b'$. Combining this matching with the three disjoint edges $zy'x'$, $z'v'w$, and $a'yb'$ yields a matching of size $\nu(\cH) + 1$, a contradiction.

Therefore, in all cases we have found a contradiction, and since we have assumed the negation of the statement of Lemma~\ref{lem:endgame}, we have proven the lemma.
\end{proof}

\section{The Proof of Theorem~\ref{thm:characterization}} \label{sec:theproofoftheoremcharacterization}

\begin{proof}[Proof of Theorem~\ref{thm:characterization}]
The proof is by induction. IH($0$) holds trivially: Let $\cH$ be a $3$-partite $3$-graph with $\nu(\cH) = 0$. Then $\cH$ has no edges, so $(\emptyset, \emptyset, V(\cH))$ is a home-base partition of $\cH$ as can easily be seen. Now assume IH($k - 1$) holds. We will show IH($k$).

Let $\cH$ be a $3$-partite $3$-graph with vertex classes $V_1$, $V_2$, and $V_3$ such that $\tau(\cH) = 2\nu(\cH) = 2k$. If it has a cromulent triple, then by Corollary~\ref{cor:cromulenttohbh}, it is a home-base hypergraph, and we are done.

Therefore, assume there is no cromulent triple. Then by Lemma~\ref{lem:endgame} there is a minimal equineighbored $X \subseteq V_3$ such that for $Y = N_{\link{\cH}{V_1}}(X)$ we also have $N_{\link{\cH}{V_1}}(Y) = X$. By Proposition~\ref{prop:extendtof}, the edges incident to $X$ form a truncated Fano plane $F$. Let $A$ be the set of $V_1$-vertices of the hyperedges of $F$. Set $\cH_1 = \cH \setminus A$. Since we have removed two vertices, we have $\tau(\cH_1) \geq \tau(\cH) - 2$, and since any matching in $\cH_1$ can be enlarged by adding an edge of $F$ (as no edge of $\cH_1$ is incident to $X$ or $Y$), we have $\nu(\cH_1) \leq \nu(\cH) - 1$. Combining these inequalities with the fact that $\tau(\cH_1) \leq 2\nu(\cH_1)$ yields that all three inequalities are actually equalities. Since $X$ and $Y$ consist of isolated vertices, the same holds true for $\cH_0 = \cH_1 \setminus (Y \cup X)$. Thus, we can apply induction to get a home-base partition of $\cH_0$ and add the $F$ to it to get a proper matchable FR-partition of $\cH$, which by Lemma~\ref{prop:matchableedgehome} is a home-base partition.

Thus in all cases, $\cH$ is a home-base hypergraph, so IH($k$) holds.

Therefore Theorem~\ref{thm:characterization} holds by induction.
\end{proof}

For interest, we can directly show also that IH($1$) holds.

\begin{prop} \label{prop:basecase}
Let $\cH$ be a $3$-partite $3$-graph with $\nu(\cH) = 1$ and $\tau(\cH) = 2$. Then $\cH$ is a home-base hypergraph.
\end{prop}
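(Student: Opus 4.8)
The plan is to prove this base case directly by exploiting the structure of intersecting $3$-partite $3$-graphs with vertex cover number $2$. Since $\nu(\cH)=1$, the hypergraph $\cH$ is intersecting, and since $\tau(\cH)=2$, no single vertex covers all edges. The target home-base partition will have to be of one of two shapes: either $\cH$ contains (the edges of) a truncated Fano plane on some six vertices, in which case we want $\cF=\{F\}$, $\cR=\emptyset$, and $W$ all remaining vertices; or $\cH$ has no truncated Fano subplane, in which case we want $\cR=\{R\}$ for a suitable triple $R$, $\cF=\emptyset$, and $W$ all remaining vertices. In the second case matchability of the single $B_i$'s means each $B_i$ (with parts $\{R\}$ and $W\cap V_i$) must have an edge incident to $R$, i.e. for each $i$ there is an edge of $\cH$ meeting $R$ in two vertices (the two not in $V_i$) and one $W$-vertex.

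First I would take any two edges $e_1\ne e_2$ of $\cH$; since $\cH$ is intersecting but not covered by one vertex, $e_1\cap e_2$ is a single vertex, and $e_1\cup e_2$ has five vertices with a common vertex $p$. I would then look at a third edge (one must exist, else $\{$the two non-$p$ vertices of $e_1\}$ would... actually two edges can already force structure; more carefully, take a minimum vertex cover $\{u,v\}$ and note $u,v$ lie in different vertex classes, since if they were in the same class some edge would have to meet that class twice). Case analysis on how edges hit $\{u,v\}$: every edge contains $u$ or $v$. Let $\cH_u$ be the edges through $u$ not through $v$, $\cH_v$ symmetric, $\cH_{uv}$ the edges through both. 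The links of $u$ restricted appropriately and of $v$ must each be nonempty (else one vertex covers). The key dichotomy: either there exist edges $e\ni u$, $f\ni v$ with $e\cap f=\emptyset$ except... no, they must intersect; so $e\cap f$ is a single vertex $w\notin\{u,v\}$. Then $\{u,v,w\}$ together with the third vertices of $e$ and $f$ begins to look like either an $R$-family based at a triple, or, if a fourth edge forces it, a truncated Fano plane.

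The cleanest route: let $R=\{r_1,r_2,r_3\}$ where we try to choose $R$ so that $R$ is a transversal (one vertex per class) with the property that every edge of $\cH$ meets $R$ in at least two vertices — if such $R$ exists we are almost done with the $\cR$-partition (then we just need to check matchability, i.e. that no edge of $\cH$ lies entirely in $R$... but an edge lying in $R$ is no problem, it's an $R$-edge; matchability requires a $W$-vertex edge for each class, which follows because otherwise deleting the two $R$-vertices in the complementary pair covers by two vertices — wait, that gives $\tau\le 2$ trivially, need to think about whether it gives a contradiction with $\nu=1$; actually if for some $i$ no edge meets $R$ in exactly the pair $R\setminus V_i$ plus a $W$-vertex, then every edge contains $r_j$ or $r_k$ for $\{i,j,k\}=\{1,2,3\}$... hmm that's a $2$-cover again, not immediately a contradiction). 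I expect the main obstacle to be precisely this bookkeeping: verifying that the partition one writes down is genuinely \emph{matchable} and \emph{proper}, so that Proposition~\ref{prop:matchableedgehome} applies and yields the edge-home property for free. To handle it I would either invoke Lemma~\ref{lem:goodtocromulent}/\ref{lem:endgame} machinery (overkill for $k=1$ but formally available via IH($0$), which holds), or — more in the spirit of ``not difficult to check'' — directly enumerate: show $\cH$ always contains a sub-configuration isomorphic to either the three-edge $R$-family $\{r_1 w_2 w_3, w_1 r_2 w_3, w_1 w_2 r_3\}$ (giving $\cR=\{R\}$, and matchability is immediate from these three edges) or a truncated Fano plane, by starting from two edges meeting at $p$ and a third edge not through $p$ and chasing intersections. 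Because every relevant hypergraph here has at most six vertices of positive degree outside $W$ and a bounded number of ``shapes,'' the verification that the resulting FR-partition is matchable and proper is a short finite check, and Proposition~\ref{prop:matchableedgehome} then finishes the proof.
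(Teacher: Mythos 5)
Your final plan coincides with the paper's proof: take two edges that meet in a single vertex (such a pair exists because $\tau>1$ rules out every pair of edges sharing two vertices), take a third edge missing the common vertex, and branch on whether a fourth edge completes the picture to a truncated Fano plane. In the Fano case set $\cF=\{F\}$ on those six vertices; otherwise take $R$ to be the three degree-two vertices of the three-edge configuration, note that the three edges themselves supply one edge in each $B_i$ so matchability is immediate, and apply Proposition~\ref{prop:matchableedgehome}. So the outline is essentially the paper's argument.

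One intermediate claim is false, though. You assert that a minimum vertex cover $\{u,v\}$ of $\cH$ must have $u$ and $v$ in different vertex classes, ``since if they were in the same class some edge would have to meet that class twice.'' That inference is not valid: every edge meets each class exactly once, so if $u,v$ lie in the same class $V_1$ then the condition that $\{u,v\}$ covers merely says that the $V_1$-vertex of every edge is $u$ or $v$, which is perfectly consistent. In fact, in the truncated Fano plane the minimum vertex covers are precisely the three vertex classes, so \emph{every} minimum cover is a same-class pair, and your claim already fails on the most basic Ryser-extremal example. Fortunately this sits in a digression you abandon, so it does not undermine the final route; just delete it. Beyond that, the ``short finite check'' you defer at the end --- verifying in the non-Fano case that the partition is proper and matchable --- is precisely where the substance of the paper's argument lives and should be carried out explicitly rather than waved at; the paper does so by a short intersection-chasing argument showing that if the fourth Fano edge is absent then every edge of $\cH$ meets $R$ in at least two vertices.
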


\begin{proof}
Suppose $\cH$ is an intersecting $3$-partite $3$-graph with $\tau(\cH) = 2$. If every pair of edges intersect in two vertices, then it is easy to see that there must then be two vertices which are in every edge, and thus $\cH$ would in fact have a vertex cover of size $1$ (pick any one of the two vertices). Therefore there must be two edges which intersect in one vertex. Label these edges $abc$ and $ade$. Since $a$ alone does not form a vertex cover, there must be an edge which misses $a$, but it must intersect both of these edges, each in a different vertex class of $\cH$. Thus WLOG, we have the edge $fbe$. If $fdc$ is also an edge of $\cH$, then we have an $F$. In this case, no further edge can be present unless it is parallel to one of the existing edges, since no other edge can intersect all four of these edges. Therefore in this case, $\cH$ is indeed a home-base hypergraph which consists of a single $F$.

If $fdc$ is not an edge of $\cH$, then we let $R = \lset{a, b, e}$, and we claim that every edge of $\cH$ contains at least two of the vertices $a$, $b$, or $e$. If an edge misses any two of these vertices, then its third vertex must be the vertex outside of $R$ of the edge among $abc$, $ade$, and $fbe$ that contains those two vertices (since $\cH$ is intersecting). Since this vertex is not in $R$ either, by symmetry the same is true of each of the other edges we have given. Thus the edge must in fact be $fdc$, which is not the case by assumption. Thus $(\emptyset, \lset{R}, V(\cH) \setminus R)$ forms an FR-partition of $\cH$ with the edge-home property. It is matchable because the graphs $B_1$, $B_2$, and $B_3$ contain edges $Rf$, $Rd$, and $Rc$, respectively, which obviously form matchings saturating $\lset{R}$. Thus in this case, $\cH$ is a home-base hypergraph consisting of a single $R$ and at least three $W$-vertices. This proves the case $\nu(\cH) = 1$.
\end{proof}

\section{Concluding Remarks and Open Questions} \label{sec:concludingremarks}

\subsection{Proof of the Reverse Implication for Theorem~\ref{thm:cpdecomposition}}

As promised, we prove here the ``if'' direction of Theorem~\ref{thm:cpdecomposition}.

\begin{proof}[Proof of Theorem~\ref{thm:cpdecomposition}, ($\Leftarrow$)]
Let $G$ be a bipartite graph with a collection of $\nu(G)/2$ pairwise vertex-disjoint subgraphs, each of them a $C_4$ or a $P_4$, such that every edge of $G$ is either an edge of one of the $C_4$'s or is incident to an interior vertex of one of the $P_4$'s. We will construct a home-base hypergraph $\cH$ with $G$ as one of its links.

Let $V_1$ and $V_2$ be the vertex classes of $G$. Let $V_3$ be a set of sufficiently many new vertices ($\nu(G)$ suffice). Let $\cH$ be the empty $3$-graph. Then $(\cF, \cR, W) = (\emptyset, \emptyset, \emptyset)$ is a home-base partition of $\cH$. We will add edges to $\cH$, maintaining a home-base partition $(\cF, \cR, W)$.

For each $C_4$ in the collection we do the following. Let $\lset{a, b, c, d}$ be the vertices of the $C_4$, so that $a, c \in V_1$, $b, d \in V_2$, and $ab, bc, cd, da \in E(G)$. Take two unused vertices $e, f \in V_3 \setminus V(\cH)$, and add the edges $abe$, $adf$, $cbf$, and $cde$ to $\cH$. These edges form a truncated Fano plane. For each edge parallel to an edge of the $C_4$, add an edge parallel to the corresponding one of these edges to $\cH$, forming a truncated multi-Fano plane. We can then add the set $F = \lset{a, b, c, d, e, f}$ to $\cF$, maintaining that $(\cF, \cR, W)$ is a home-base partition of $\cH$. Clearly, the $C_4$ is now present in the link $\link{\cH}{V_3}$ together with all its parallel edges.

Then, for each $P_4$ in the collection we do the following. Let $\lset{a, b, c, d}$ be the vertices of the $P_4$, so that $a, c \in V_1$, $b, d \in V_2$, and $ab, bc, cd \in E(G)$. Take two unused vertices $e, f \in V_3 \setminus V(\cH)$, and add the edges $abe$, $cbf$, and $cde$ to $\cH$. For each edge parallel to an edge of the $P_4$, add an edge parallel to the corresponding one of these edges to $\cH$. Add the set $R = \lset{b, c, e}$ to $\cR$, and add the vertices $a$, $d$, and $f$ to $W$. The edges $abe$, $cbf$, and $cde$ are $R$-edges with a $W$-vertex in $V_1$, $V_3$, and $V_2$, respectively. Thus $a$, $d$, and $f$ can be matched to $R$ in $B_1$, $B_3$, and $B_2$, respectively, without disturbing matchability, since the $W$-vertices are new. Clearly the $P_4$ is now present in the link $\link{\cH}{V_3}$ along with all parallel edges, and note especially that its interior vertices are members of $R$.

Once we've processed all the $C_4$'s and $P_4$'s, any edges of $G$ not yet present in the link $\link{\cH}{V_3}$ are incident to an interior vertex of one of the $P_4$'s. Let $xy \in E(G)$ be such an edge, and suppose $y$ is an interior vertex of one of the $P_4$'s. Then $y \in R$ for some $R \in \cR$. Let $z \in R \cap V_3$. Then, we add the edge $xyz$ to $\cH$. If $x$ was not previously a vertex of $\cH$, we add it to $W$, otherwise, we leave it where it is. Since $xyz$ is an $R$-edge, $\cH$ is still a home-base hypergraph with home-base partition $(\cF, \cR, W)$. After this addition, $xy$ is present in the link $\link{\cH}{V_3}$. We process every remaining edge this way.

If $G$ has any isolated vertices, we add them to $\cH$, putting them in $W$ (these clearly do not disturb the home-base partition of $\cH$). Now $\cH$ is a home-base hypergraph with $\link{\cH}{V_3} = G$. We know $\cH$ satisfies $\tau(\cH) = 2\nu(\cH)$ by Proposition~\ref{prop:homebasetight}, and hence by equation~\eqref{eq:linkconn} we have $\conn(L(G)) = \frac{\nu(G)}{2} - 2$, as desired.
\end{proof}

\subsection{The Connectedness of the Line Graphs of Home-Base Hypergraphs}

For $3$-graphs $\cH$, Theorem~\ref{thm:matchconn} gives
\[
	\conn(L(\cH)) \geq \frac{\nu(\cH)}{3} - 2.
\]
Using our characterization, we can show that the Ryser-extremal $3$-graphs are far from tight for this theorem. For a Ryser-extremal $3$-partite $3$-graph we can improve the bound to the following:

\begin{prop} \label{prop:homebaseconn}
If $\cH$ is a home-base hypergraph, then
\[
	\conn(L(\cH)) \geq \frac{2}{3}\nu(\cH) - 2.
\]
\end{prop}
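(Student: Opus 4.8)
The plan is to build, from a home-base partition $(\cF,\cR,W)$ of $\cH$, a low-dimensional subhypergraph whose line graph governs the connectedness, and then apply Theorem~\ref{thm:matchconn}. Here is the key idea. Write $k=\nu(\cH)=|\cF\cup\cR|$. For each $F\in\cF$, the induced hypergraph $\cH|_F$ is a truncated multi-Fano plane, which has matching number $1$. The point is that the topology of the line graph of a disjoint union behaves additively on connectedness in the sense of a join: if $\cH=\cH'\sqcup\cH''$ is a vertex-disjoint union, then $L(\cH)=L(\cH')\ast L(\cH'')$ (disjoint union of graphs corresponds to join of independence complexes), so $\conn(L(\cH))\ge \conn(L(\cH'))+\conn(L(\cH''))+2$. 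So it suffices to analyze each $F\in\cF$ and each $R\in\cR$ separately (together with the matching edges into $W$) and add up.

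First I would reduce to the ``core'' subhypergraph. By Proposition~\ref{prop:homebasetight} and equation~\eqref{eq:linkconn} we already control the links; but more directly, note that adding parallel edges does not change $L(\cH)$ up to homotopy of the independence complex in the relevant range, and adding an edge that is incident to an already-covered structure only makes $L(\cH)$ more connected (adding edges to a graph can only increase the connectedness of its independence complex, since we are deleting faces of the independence complex... — actually this needs care, so instead:) I would pass to the subhypergraph $\cH^\ast$ consisting, for each $F\in\cF$, of the four (or more, with parallels) $F$-edges, and for each $R\in\cR$, of the three edges $w^R_ir_jr_k$ coming from fixed matchings $M_1,M_2,M_3$ saturating $\cR$ in $B_1,B_2,B_3$ (exactly the edges used in the proof of Proposition~\ref{prop:homebasetight}). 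These pieces are pairwise vertex-disjoint. Each ``$F$-piece'' is a truncated multi-Fano plane, which has $6$ vertices and $\nu=1$; its line graph's independence complex is that of (a blow-up by parallels of) the truncated Fano plane, which one checks directly is simply connected, i.e.\ $\conn\ge 1$. Each ``$R$-piece'' is the star-like $3$-graph on $\{r_1,r_2,r_3,w^R_1,w^R_2,w^R_3\}$ with the three edges $w^R_ir_jr_k$; these three edges pairwise intersect (each pair shares an $r$-vertex), so $L$ of this piece is a triangle $K_3$, whose independence complex is three disjoint points, which is $0$-connected? No — three disjoint points is only $(-1)$-connected (non-empty but disconnected), so $\conn=0$ for an $R$-piece is wrong; it is $\conn\ge 0$ only if connected. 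Let me instead note the independence complex of $K_3$ is $3$ isolated vertices, so it is $0$-dimensional and connected-nonempty fails; hence $\conn=-1+1=0$? I will state it as: for an $R$-piece, $\conn(L)\ge -1$, and combine more carefully, or better, fold in one extra edge per $R$ to make it a $P_4$-type link giving $\conn\ge 0$.

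Then I would assemble: using the join formula across the $k$ pairwise-disjoint pieces ($|\cF|$ of them with $\conn\ge 1$, $|\cR|$ of them with $\conn\ge -1$, say, but arranged so the total is at least $\tfrac23 k - 2$), one gets
\[
  \conn(L(\cH^\ast)) \;\ge\; \sum_{\text{pieces}} \bigl(\conn(\text{piece})+1\bigr) - 1 \;\ge\; 2|\cF| + 0\cdot|\cR| - 1,
\]
and since $|\cF|\ge \nu(\cH)/? $ — this is where the $\tfrac23$ must come from: each $F$ contributes $\ge 1$ to matching number but buys $2$ in connectedness, each $R$ contributes $1$ to matching number but $0$ to connectedness, and the worst case $\cF=\emptyset$ gives only $\conn \ge -1$, not $\tfrac23\nu-2$. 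So the bound $\tfrac23\nu-2$ must instead be obtained from Theorem~\ref{thm:matchconn} applied to a cleverly chosen $6$-graph: replace each $R$-piece by a single $6$-edge $\{r_1,r_2,r_3,w^R_1,w^R_2,w^R_3\}$? No. The right move: form the $6$-graph $\cG$ on $V(\cH^\ast)$ whose edges are, for each $F$, the vertex set $F$ itself (a $6$-set), and for each $R$, the $6$-set $R\cup\{w^R_1,w^R_2,w^R_3\}$; these $k$ edges are pairwise disjoint, so $\nu(\cG)=k$ and $L(\cG)$ is an empty graph on $k$ vertices, whose independence complex is the full simplex — infinitely connected, useless.

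I therefore expect the \textbf{main obstacle} to be getting the constant $\tfrac23$ correctly: the honest route is to apply Theorem~\ref{thm:matchconn} to the $3$-graph $\cH$ but after noting that a home-base hypergraph's edges, restricted to the disjoint pieces, can be covered by $2k$ vertices in a way that the line graph retracts onto the line graph of a $3$-graph with matching number $k$ but where the relevant independence complex is the $k$-fold join of the independence complexes of the pieces. Since an $F$-piece's independence complex is simply connected and an $R$-piece's is $(-1)$-connected (nonempty), the join over all $k$ pieces has connectedness $\ge \sum(c_i+1)-1$ where $c_i=1$ for the $|\cF|$ Fano pieces and $c_i=-1$ for the $|\cR|$ star pieces — wait, $-1+1=0$, so each $R$ contributes $0$, each $F$ contributes $2$, total $\ge 2|\cF| + 0 - 1 = 2|\cF|-1$. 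Meanwhile $\nu=|\cF|+|\cR|$. To reach $\tfrac23\nu-2 = \tfrac23|\cF|+\tfrac23|\cR|-2$, we need $2|\cF|-1 \ge \tfrac23|\cF|+\tfrac23|\cR|-2$, i.e.\ $\tfrac43|\cF|+1\ge \tfrac23|\cR|$, which \emph{fails} when $|\cR|$ is huge. So the clean join bound is \emph{not} enough for $R$'s; the $\tfrac23$ must exploit that each $R$ actually sits inside a copy of the truncated Fano link structure via its three edges plus, crucially, that two $R$'s whose $W$-vertices happen to coincide or interact get counted together. Concretely I would group the $\cR$-pieces into triples: any three disjoint $R$-pieces together form a $3$-graph on $18$ vertices with $9$ edges and matching number $3$, and I would verify its line graph has $\conn\ge 0 = \tfrac{3}{3}-2+1$... hmm. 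Honestly, the cleanest rigorous path, and the one I would write up, is: apply Theorem~\ref{thm:matchconn} \emph{separately} to the sub-$3$-graph $\cH_{\cF}$ on $\bigcup_{F}F$ and to $\cH_{\cR}$ on the rest, using $\conn(L(\cH)) \ge \conn(L(\cH_\cF)) + \conn(L(\cH_\cR)) + 2$, bound $\conn(L(\cH_\cF))\ge \nu(\cH_\cF) - 2 = |\cF|-2$ directly (each Fano component contributes $1$ and the join formula gives $\ge 2|\cF|-1 \ge |\cF|-... )$, wait again. I will instead finally use: $\conn(L(\cH))\ge \tfrac{\nu(\cH)}{3}-2$ is given, and for home-base hypergraphs one improves the coefficient from $\tfrac13$ to $\tfrac23$ by observing each of the $k$ disjoint pieces has a line graph whose independence complex is at least \emph{$0$-connected} (nonempty and connected — true for the Fano piece since its line graph $K_{3,3}$-ish is... the truncated Fano line graph is $K_4$? no, it's four edges pairwise intersecting in exactly one vertex, so $L=K_4$, independence complex $=$ four points, disconnected!). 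Given these genuine subtleties, the main obstacle is precisely pinning down $\conn(L(\text{truncated multi-Fano plane}))$ and $\conn$ of the star pieces exactly, and then choosing the grouping of pieces so that the join formula delivers the factor $\tfrac23$; I expect the correct statement to be that each piece contributes exactly $+1$ after the ``$+1$'' shift except Fano pieces contribute $+2$, yielding $\conn(L(\cH))\ge 2|\cF|+|\cR|-1 \ge \tfrac{2}{3}(|\cF|+|\cR|)+\tfrac13(4|\cF|-|\cR|)-1$, which is $\ge \tfrac23\nu -2$ once one checks $4|\cF|-|\cR|\ge -3$, and I would close the remaining case $|\cR|>4|\cF|+3$ by a separate direct argument bounding $\conn$ below by $|\cR|/3$ via Theorem~\ref{thm:matchconn} on $\cH_\cR$ alone. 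I would organize the write-up around the join identity $\conn(L(\cH'\sqcup\cH''))\ge\conn(L(\cH'))+\conn(L(\cH''))+2$ as the workhorse, compute the two base cases by hand, and finish with the arithmetic.
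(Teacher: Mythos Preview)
The paper does not prove this proposition in-text; it explicitly defers to~\cite{lotharsthesis}, remarking only that the argument uses Theorem~\ref{thm:characterization} together with topological tools from~\cite{HNS}. So there is no paper proof to compare against directly. That said, your proposal contains a genuine gap that is independent of whatever the intended argument may be.

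Your strategy is to restrict to a ``core'' subhypergraph $\cH^\ast\subseteq\cH$ whose $F$- and $R$-pieces are pairwise vertex-disjoint, so that $L(\cH^\ast)$ is a disjoint union, $I(L(\cH^\ast))$ is a join, and you can compute each factor. The trouble is that this bounds $\conn(L(\cH^\ast))$, not $\conn(L(\cH))$: the graph $L(\cH^\ast)$ is only an induced subgraph of $L(\cH)$, and there is no monotonicity of $\conn(I(\cdot))$ under taking induced (or spanning) subgraphs. You flagged this yourself (``actually this needs care'') but never resolved it. In fact your own calculation refutes the approach. Both an $F$-piece (its line graph is $K_4$, not anything simply connected --- every two edges of a truncated Fano plane meet, so your claim $\conn\ge 1$ for the Fano piece is wrong) and an $R$-piece (line graph $K_3$) have independence complex a finite discrete set, hence $\conn=-1$; iterating the join inequality over $k=\nu(\cH)$ pieces gives $\conn(L(\cH^\ast))\ge k-2=\nu(\cH)-2$. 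But the paper states that the bound $\tfrac23\nu(\cH)-2$ is \emph{tight}, so there exist home-base hypergraphs with $\conn(L(\cH))<\nu(\cH)-2$. The extra $\cR$-edges of $\cH$ beyond $\cH^\ast$ (which may run between different pieces and into $V(\cF)$) must genuinely lower the connectedness, and your argument provides no mechanism to control that loss. Any correct proof has to engage with all of $E(\cH)$, not a disjoint skeleton; the arithmetic juggling in the second half of your proposal never addresses this and so cannot close the gap.
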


It is not difficult to show that this bound is tight. The proof of Proposition~\ref{prop:homebaseconn} can be found in~\cite{lotharsthesis}. It makes use of Theorem~\ref{thm:characterization} as well as some topological notions discussed in \cite{HNS}, and hence is outside the scope of the present paper.

Since Proposition~\ref{prop:homebaseconn} is a strengthening of Theorem~\ref{thm:matchconn} when $\tau(\cH) = 2\nu(\cH)$, one could ask for the best possible extension of it when the ratio $\tau/\nu$ is different from $2$. To make this precise, let us define the function $f: [1, 2] \to \R$ by
\[
	f(x) = \inf\set{\frac{\conn(L(\cH)) + 2}{\nu(\cH)}}{\cH \mbox{ is a } 3\mbox{-partite } 3\mbox{-graph}, \tau(\cH) \geq x\nu(\cH)}.
\]
We then have that for any $3$-partite $3$-graph $\cH$ with $\tau(\cH) = x\nu(\cH)$ it holds that
\[
	\conn(L(\cH)) \geq f(x)\nu(\cH) - 2.
\]

Clearly $f$ is monotone increasing and bounded below by $1/3$, by Theorem~\ref{thm:matchconn}. Since Proposition~\ref{prop:homebaseconn} is tight, we have $f(2) = 2/3$, while there are easy examples showing $f(1) = 1/3$. One could speculate whether there is a linear lower bound on $f$ interpolating these two extremes, so that $f(x) \geq x/3$. This would be very interesting, as it would imply Ryser's Conjecture for $4$-partite $4$-graphs by a straightforward generalization of Aharoni's argument for $3$-partite $3$-graphs. Unfortunately, this does not turn out to be the case, as there is a violation of this bound for $x = 4/3$:

\begin{prop}
There is a $3$-partite $3$-graph $\cH$ with $\tau(\cH) = 4$ and $\nu(\cH) = 3$ such that $\conn(L(\cH)) = -1$.
\end{prop}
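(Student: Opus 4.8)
The plan is to prove the statement by exhibiting one explicit hypergraph and checking the three requirements by hand. Take vertex classes $V_1=\{a_1,a_2,a_3,a_4\}$, $V_2=\{b_1,b_2,b_3,b_4\}$, $V_3=\{c_1,c_2,c_3,c_4\}$, and let $\cH$ be the $3$-partite $3$-graph whose nine edges are
\[
	a_1b_1c_1,\quad a_2b_2c_2,\quad a_3b_3c_3,\quad a_4b_2c_1,\quad a_1b_4c_1,\quad a_1b_3c_4,\quad a_2b_1c_3,\quad a_1b_2c_3,\quad a_2b_3c_1.
\]
First I would check $\nu(\cH)=3$: the first three edges form a matching, so $\nu(\cH)\ge 3$, and a matching of size $4$ would have to use every vertex class in full, hence would have to contain the unique edge through $a_3$ (namely $a_3b_3c_3$) and the unique edge through $a_4$ (namely $a_4b_2c_1$); but those two already occupy the vertices $b_2,b_3,c_1,c_3$, which blocks every one of the four edges through $a_1$, so no matching of size $4$ exists.

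Next I would verify $\tau(\cH)=4$. Since $V_1$ is a vertex cover we have $\tau(\cH)\le 4$. For the reverse inequality, suppose $S$ with $\abs{S}=3$ were a vertex cover. As $S$ meets the three pairwise disjoint edges $a_ib_ic_i$ $(i=1,2,3)$, it contains exactly one vertex $s_i$ of each, and no vertex of $S$ lies outside $a_1b_1c_1\cup a_2b_2c_2\cup a_3b_3c_3$. The edge $a_1b_4c_1$ then forces $s_1\in\{a_1,c_1\}$. If $s_1=a_1$, the edge $a_4b_2c_1$ forces $s_2=b_2$, after which $a_2b_3c_1$ forces $s_3=b_3$ while $a_2b_1c_3$ forces $s_3=c_3$, a contradiction. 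If $s_1=c_1$, the edge $a_1b_3c_4$ forces $s_3=b_3$ and $a_1b_2c_3$ forces $s_2=b_2$, so $S=\{c_1,b_2,b_3\}$, which misses $a_2b_1c_3$, again a contradiction. Hence $\tau(\cH)=4$.

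Finally I would compute $\conn(L(\cH))$. The independence complex of $L(\cH)$ is the matching complex of $\cH$, so it suffices to show that complex is disconnected. Partition $E(\cH)=E_1\cup E_2$ with $E_2=\{a_1b_2c_3,\ a_2b_3c_1\}$ and $E_1$ the remaining seven edges; a direct check shows every edge of $E_1$ meets both edges of $E_2$ (and $a_1b_2c_3\cap a_2b_3c_1=\emptyset$), so no matching of $\cH$ uses an edge from each part. Consequently the matching complex of $\cH$ is the disjoint union of the two nonempty matching complexes of $(V(\cH),E_1)$ and $(V(\cH),E_2)$, hence is disconnected, giving $\conn(L(\cH))\le -1$; together with $\conn(L(\cH))\ge \nu(\cH)/3-2=-1$ from Theorem~\ref{thm:matchconn} this yields $\conn(L(\cH))=-1$.

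The only genuinely nontrivial part is producing the hypergraph $\cH$; the three verifications above are finite case checks. The difficulty in the search is that a disconnected matching complex forces one part of the edge partition to contain the size-$\nu$ matching and the other to consist of transversals of that matching that meet every edge of the first part, and this tends to squeeze all the edges onto few vertices and drive $\tau$ down, whereas $\tau=4$ requires $\abs{V_i}\ge 4$ for all $i$ together with the destruction of every $3$-element cover. Reconciling these competing demands while keeping $\nu(\cH)=3$ is the step I expect to be the main obstacle; once a valid configuration such as the one above is written down, the rest is routine.
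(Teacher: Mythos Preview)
Your proof is correct and follows essentially the same approach as the paper: exhibit an explicit $3$-partite $3$-graph, verify $\nu=3$ and $\tau=4$ directly, and show the matching complex is disconnected by producing a pair of disjoint edges that each meet every other edge. The paper's example has eight edges rather than your nine, and the paper leaves the verification of $\tau$ and $\nu$ to the reader while you spell them out (and you also explicitly invoke Theorem~\ref{thm:matchconn} for the lower bound on $\conn$, which the paper only uses implicitly), but the strategy is identical.
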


\begin{proof}
Let $\cH$ be the $3$-partite $3$-graph on the vertices $\lset{1, 2, 3} \times \lset{1, 2, 3, 4}$ with vertex classes given by the first coordinate. The edges are two intersecting matchings of size $3$ and a matching of size $2$ which interesects every edge of the first two matchings. The first matching is $\lset{(1, 1), (2, 2), (3, 3)}$, $\lset{(1, 2), (2, 3), (3, 1)}$, and $\lset{(1, 3), (2, 1), (3, 2)}$; the second is $\lset{(1, 2), (2, 4), (3, 3)}$, $\lset{(1, 3), (2, 2), (3, 4)}$, and $\lset{(1, 4), (2, 3), (3, 2)}$; and the two remaining edges are $\lset{(1, 2), (2, 2), (3, 2)}$, and $\lset{(1, 3), (2, 3), (3, 3)}$. It is not hard to check that $\tau(\cH) = 4$ and $\nu(\cH) = 3$. Since the last two edges intersect all the other ones, they form a connected component of size $2$ in $I(L(\cH))$, so the complex is not $0$-connected.

\begin{center}
\begin{tikzpicture}
[edge1/.style={very thick, green!50!black},
edge2/.style={very thick, blue},
edge3/.style={very thick},
vertex/.style={circle, fill, inner sep=0, minimum size=2mm}]

\draw[edge1] (0,2) parabola bend (7/6,-1/24) (2,1);
\draw[edge1] (0,1) parabola bend (5/6,49/24) (2,0);
\draw[edge1] (0,0) -- (2,2);
\draw[edge2] (0,3) -- (2,1);
\draw[edge2] (0,2) parabola bend (5/6,23/24) (2,3);
\draw[edge2] (0,1) parabola bend (7/6,73/24) (2,2);
\draw[edge3] (0,1) -- (2,1);
\draw[edge3] (0,2) -- (2,2);
\foreach \x in {0, 1, 2}
	\foreach \y in {0, 1, 2, 3}
		\node[vertex] at (\x,\y) {};
\end{tikzpicture}
\captionof{figure}{A $3$-partite $3$-graph $\cH$ with $\tau(\cH) = 4$, $\nu(\cH) = 3$, and $\conn(L(\cH)) = -1$.}
\end{center}
\end{proof}

This shows that $f(x) = 1/3$ for $x \in [1, 4/3]$. It can also be shown that $f(x) \geq x/5$ for every $x \in [1, 2]$, but this only represents an improvement when $x \in (\frac{5}{3}, 2)$ (see~\cite{lotharsthesis}). We conjecture that $f(x) \geq x/4$ for every $x\in [1, 2]$. 

To approach Ryser's Conjecture for $4$-graphs, we seem to need a much better understanding of the potential link $3$-graphs, in particular those with $\tau(\cH) > \nu(\cH)$. We believe the function $f$ will be a useful tool for this purpose, even though the extension of Aharoni's argument, at least in its most straightforward version, does not succeed due to the fact that $f(4/3) = 1/3$.

\end{document}